\newcommand\junk[1]{}
\DeclareMathOperator{\coker}{coker}
\DeclareSymbolFont{cyrletters}{OT2}{wncyr}{m}{n} 
\DeclareMathSymbol{\Gam}{\mathalpha}{cyrletters}{"47}
\DeclareMathSymbol{\Ca}{\mathalpha}{cyrletters}{"57}
\newcommand\lie[1]{{\mathfrak #1}}
\newcommand\iso{\mathrel{\cong}}
\newcommand\CC{\mathbb C}
\newcommand\tensor{{\otimes}}
\newcommand\calO{{\mathcal O}}
\newcommand\calM{{\mathcal M}}
\newcommand\calL{{\mathcal L}}
\newtheorem{Theorem}{Theorem}
\newtheorem{Proposition}{Proposition}
\newtheorem{Lemma}{Lemma}
\newtheorem{Corollary}{Corollary}
\newtheorem*{Corollary*}{Corollary}
\newtheorem*{Lemma*}{Lemma}
\newtheorem*{Theorem*}{Theorem}
\theoremstyle{remark}
\newtheorem{Example}{Example}
\newcommand\onto{\mathrel{\twoheadrightarrow}}
\newcommand\into{\mathrel{\hookrightarrow}}
\newcommand\union{\cup}
\newcommand\Union{\bigcup}
\newcommand\PP{{\mathbb P}}
\newcommand\ZZ{{\mathbb Z}}
\newcommand\RR{{\mathbb R}}
\newcommand\rationals{{\mathbb Q}}
\newcommand\NN{{\mathbb N}}
\newcommand\Oplus{\bigoplus}
\theoremstyle{plain}
\theoremstyle{remark}
\renewenvironment{quotation}
{\list{}{%\setlength\listparindent{0.5em}%
    \setlength\itemindent{0em}%
    \setlength\leftmargin{1.5em}
    \setlength\rightmargin{1.5em}
  }%
\item[]}
{\endlist}
\newcommand\defn[1]{{\bf #1}} % maybe should be \em
\newcommand\actson{\circlearrowright}
\newcommand\Spec{{\rm Spec}\,}
\newcommand\Gm{{{\mathbb G}_m}}
\newcommand\QQ{\rationals}
\newcommand\FF{{\mathbb F}}
\newcommand\wt{\widetilde}
\font\co=lcircle10
\def\jr{\smash{\raise2pt\hbox{\co \rlap{\rlap{\char'005} \char'007}}
               \raise6pt\hbox{\rlap{\vrule height6.5pt}}
               \raise2pt\hbox{\rlap{\hskip4pt \vrule height0.4pt depth0pt
                width7.7pt}}}}
\def\je{\smash{\raise2pt\hbox{\co \rlap{\rlap{\char'005}
                \phantom{\char'007}}}\raise6pt\hbox{\rlap{\vrule height6pt}}}}
\def\+{\smash{\lower2pt\hbox{\rlap{\vrule height14pt}}
                \raise2pt\hbox{\rlap{\hskip-3pt \vrule height.4pt depth0pt
                width14.7pt}}}}
\def\textcross{\ \smash{\lower4pt\hbox{\rlap{\hskip4.15pt\vrule height14pt}}
                \raise2.8pt\hbox{\rlap{\hskip-3pt \vrule height.4pt depth0pt
                width14.7pt}}}\hskip12.7pt}
\def\textelbow{\ \hskip.1pt\smash{\raise2.8pt%
                \hbox{\co \hskip 4.15pt\rlap{\rlap{\char'005} \char'007}
                \lower6.8pt\rlap{\vrule height3.5pt}
                \raise3.6pt\rlap{\vrule height3.5pt}}
                \raise2.8pt\hbox{%
                  \rlap{\hskip-7.15pt \vrule height.4pt depth0pt width3.5pt}%
                  \rlap{\hskip4.05pt \vrule height.4pt depth0pt width3.5pt}}}
                \hskip8.7pt}
\tikzset{mynode/.style={circle,draw=black,fill=black,inner sep=1.8pt,outer sep=0pt}}
\tikzset{whitenode/.style={circle,draw=black,fill=white,inner sep=1.8pt,outer sep=0pt}}
\tikzset{edgelabel/.style={\mcol,inner sep=0pt}}
\tikzset{invlabel/.style={draw=black,text=black,circle,inner sep=0pt,minimum size=3mm}}
\tikzset{smallnode/.style={circle,draw=black,fill=black,
    inner sep=.01pt,outer sep=0pt}}
\newcommand\rhocek{{\check\rho}}
\newcommand\rhotilde{{\wt\rho}}
\begin{document}
\pagestyle{plain}

\title{Stable map quotients (and orbifold log resolutions)
  of Richardson varieties}

\author{Allen Knutson}
\email{allenk@math.cornell.edu}
\date{\today}

\maketitle

\renewcommand\AA{{\mathbb A}}

\begin{abstract}
  Let $X_\lambda^\mu := X_\lambda \cap X^\mu \subseteq G/P$ be a
  Richardson variety in a generalized partial flag manifold.  We use
  equivariant stable map spaces to define a canonical resolution
  $\wt{X_\lambda^\mu}$ of singularities, albeit obtaining an orbifold
  not a manifold.  The ``nodal curves'' boundary is an (orbifold)
  simple normal crossings divisor, and is conjecturally anticanonical.
  Its dual simplicial complex is the order complex of the open Bruhat
  interval $(\lambda,\mu) \subseteq W/W_P$, shown in [Bj\"orner-Wachs
  '82] to be a sphere or ball. In the case of $G/P$ a Grassmannian,
  the resolution $\wt{X_\lambda^\mu}$ is a GKM space, whose $T$-fixed
  points are indexed by rim-hook tableaux.
\end{abstract}

{\Small
  \setcounter{tocdepth}{3}
 \tableofcontents
}

\newcommand\eqdot{\dot =\, }

\section{Introduction}

Fix a pinning $(G,B,B_-,T,W)$ of a connected reductive complex algebraic group,
and a parabolic subgroup $P \leq G$, $P\geq B$.
For $\lambda,\mu \in W/W_P$, define the closed subvarieties of $G/P$
$$ X_\lambda := \overline{B_-\, \lambda P}/P,\quad
X^\mu := \overline{B \mu P}/P,\quad
X_\lambda^\mu := X_\lambda \cap X^\mu
$$
and call them a \defn{Schubert variety},
\defn{opposite Schubert variety}, and \defn{Richardson variety}
respectively. They are known to be normal with rational singularities.
One way (from \cite{Brion}; see also \cite{Escobar}) to resolve those
singularities involves choosing Bott-Samelson resolutions of 
$X_\lambda$ and $X^\mu$, whose fiber product is a {\em brick manifold} resolution.
In particular, this involves choosing reduced words for some elements of $W$.

In this paper we produce another resolution $\wt{X_\lambda^\mu}$,
with {\em no} choices involved, suffering the slight drawback that the
resolution is not by a manifold but by a smooth orbifold (or,
Deligne-Mumford stack). It is a couple of steps removed from a space
of genus zero stable maps, and we relate it in \S\ref{sec:Chow} to
Chow quotients.\footnote{Several years after we embarked on this study
  \cite{Allensdottirs} we learned that Mateusz Michalek, Leonid Monin,
  and Botong Wang were investigating the corresponding Chow quotients.
  In upcoming work,
  they make fascinating use of them to define a ``kernel'' in
  the poset $W/W_P$ (see \cite{FerroniEtAl} for the definition),
  where the palindromicity of the rational Betti numbers of
  $\wt{X_\lambda^\mu}$ plays a key role.}

If $P=B$, where we instead use Roman letters $u,v \in W$,
the \defn{boundary}
$$ \partial X_u^v := \Union_{u" \gtrdot u} X_{u'}^v \union
\Union_{v" \lessdot v} X_u^{v'}$$
(where $\gtrdot,\lessdot$ indicate covering relations)
is an anticanonical divisor \cite[lemma 5.4]{projRich}.
Some properties shared by both the brick manifold resolution and
$\wt{X_u^v}$ are that
\begin{itemize}
\item while not ``strict resolutions'' (ones that blow up no smooth points),
  they only blow up points on that boundary,
\item the preimage of $\partial X_u^v$ is a simple normal
  crossings divisor (albeit orbifold, in our case), and
\item its ``dual simplicial complex'' (in the sense of \cite{KollarXu,Payne};
  see \S\ref{sec:dualcomplex}) to that sncd is homeomorphic to a sphere.
\end{itemize}

The dual simplicial sphere to $\partial \wt{X_u^v}$ is the suspension of the
{\em order complex} of the open interval $(u,v) \subseteq W$ in
Bruhat order, shown combinatorially to be a sphere in \cite{BjornerWachs}.
Analogously, the dual simplicial sphere to the boundary of a brick manifold was
shown in  \cite{Escobar} to be a spherical subword complex.

Most of our results concern a simpler space we introduce, the ``stable map
quotient'' $X//_{StMap}\, S$ of a variety $X$ by the action of
a circle $S \iso \CC^\times$.
The resolution $\wt{X_\lambda^\mu}$ then arises via a ``graph space'' construction
$(X_\lambda^\mu \times \PP^1)//_{StMap}\, S_\Delta$ using the diagonal action,
where the action of $S$ on $X_\lambda^\mu$ comes from $S \leq T$ being the
first regular dominant coweight $\rhocek$.

After defining these spaces in \S\ref{sec:construction},
our first main result is the orbifold-smoothness of $\wt{X_\lambda^\mu}$.
% Using some standard machinery recapitulated in \S\ref{sec:weights},
We comment in \S\ref{sec:Chow} on the close relation of the stable map
quotient to the Chow quotient $X_\lambda^\mu //_{Chow}\, \rhocek$.
In \S\ref{sec:dualcomplex} we compute the stratification
of $X_\lambda^\mu//_{StMap}\, \rhocek$ induced from the sncd.
The resolution $\wt {X_\lambda^\mu}$ bears an action of the torus
$B_-\cap B = T$, and we study this action in \S\ref{sec:Tweights}.
While $\wt{X_\lambda^\mu}$ has isolated $T$-fixed
points and every point stratum is of course $T$-fixed, there are additional
$T$-fixed points making it tricky to compute Betti numbers,
which we do in the Grassmannian and $GL(n)/B$ cases.
In \S\ref{sec:2GKM} we further study the Grassmannian case,
in which $\wt{X_\lambda^\mu}$ has not only isolated fixed points but
isolated fixed curves (the ``Goresky-Kottwitz-MacPherson property'').
In \S\ref{sec:linebundles} we define some $T$-equivariant classes
on the $T$-fixed points, satisfying these GKM conditions, which seem likely
to be the first Chern classes of $T$-equivariant (orbi-)line bundles.
In \S\ref{sec:anticanonical} we address the sense in which the
boundary divisor $\partial \wt{X_\lambda^\mu}$ is anticanonical,
shedding new light on the difference between the ball and sphere cases
of \cite{BjornerWachs}.

\subsection{Acknowledgments}\label{ssec:ack}

I express my gratitude to Daniel Halpern-Leistner, Adeel Khan, Angelo
Vistoli, and Rachel Webb for helping me with the basics of stacks and
stable curves; any remaining mistakes, or even just gauchely
stated arguments, are mine.
I thank Christian Gaetz and Shiliang Gao for directing me to Dyer's result,
and Brian Hwang for directing me to Chen and Satriano's result.
Thanks also to Mateusz Michalek and Botong Wang for generously explaining
their interests in and applications of the Chow quotients, and to the
organizers of the Workshop on Combinatorics of Enumerative Geometry at
the Insitute for Advanced Study in February 2025 for serendipitously
bringing us together.

\section{Stable map quotients}\label{sec:construction}

Our reference for stable map spaces is \cite{FP}.

\subsection{Stable map spaces}

Given a curve class $\beta \in A_1(M)$ in the first Chow group
of a projective variety $M$, one defines the space $StMap_\beta(\PP^1,M)$
of \defn{genus zero stable maps} $\gamma\colon \Sigma \to M$
(with no ``marked points''),
where $\Sigma$ is an at-worst-nodal (and probably reducible)
connected curve of genus zero
(so, a tree of $\PP^1$s) and $\gamma_*([\Sigma]) = \beta$.
In addition, the group of automorphisms of $\gamma$ (maps $\Sigma\to\Sigma$
inducing a commuting triangle) is required to be finite;
equivalently, each component of $\Sigma$ that $\gamma$ collapses to
a point in $M$ should meet at least three other components of $\Sigma$.

So far this defines a {\em set} $\{[\Sigma,\gamma]\}$ of isomorphism
classes $[\Sigma,\gamma]$ of pairs $(\Sigma,\gamma)$; there is
more work involved to define a moduli {\em space}, and the finite groups of
automorphisms mean this space will generally be a {\em Deligne-Mumford stack}
or ``orbifold'' in the differential-topological language.

\begin{Theorem}\cite[part of Theorem 2]{FP}
  If $M$ is a generalized flag variety $G/P$ (or more generally is ``convex''),
  then $StMap_\beta(\PP^1,M)$ can be given the structure of a smooth orbifold.%
  \footnote{To be precise, in \cite{FP} they punt discussion of stacks.
    In their \S4 they construct their moduli spaces by gluing
    together quotients of manifolds by finite groups, leaving
    the reader to check that the stabilizer groups match across
    the gluing.}
\end{Theorem}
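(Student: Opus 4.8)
The plan is to follow \cite{FP}: build the moduli stack first (the laborious part), then deduce smoothness from the vanishing of a single obstruction group, which in turn follows from convexity.

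\textbf{Step 1 (build the moduli stack).} First I would construct $StMap_\beta(\PP^1,M)$ as a proper Deligne--Mumford stack for \emph{arbitrary} smooth projective $M$, with no convexity hypothesis. Following \cite{FP}, one first treats $M=\PP^r$ by an explicit construction (an iterated blowup of a parameter space of degree-$d$ maps modulo $\mathrm{PGL}_2$), and then, for a chosen projective embedding $M\hookrightarrow\PP^r$, realizes $StMap_\beta(\PP^1,M)$ as the closed substack cut out by the ideal of $M$ together with the numerical constraint $\gamma_*[\Sigma]=\beta$. Properness is Kontsevich's valuative criterion (stable reduction for maps), and the Deligne--Mumford property is exactly the finiteness of $\mathrm{Aut}(\Sigma,\gamma)$, which is forced by the stability hypothesis that each $\gamma$-collapsed component of $\Sigma$ meet at least three others. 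As the footnote above records, this is precisely where \cite{FP} sidestep stacks and leave the reader to verify that the charts $[\,(\text{smooth variety})/(\text{finite group})\,]$ glue compatibly; \emph{this bookkeeping is the genuine obstacle}, discharged by the modern foundational accounts (Behrend--Manin, Abramovich et al., the Stacks Project). Everything below is insensitive to how Step~1 is carried out.

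\textbf{Step 2 (isolate the obstruction).} The deformation theory of a stable map $(\Sigma,\gamma)$ with at-worst-nodal genus-zero domain is governed by the $\mathrm{Ext}$-groups over $\Sigma$ of the two-term complex $[\,\gamma^*\Omega_M\to\Omega_\Sigma\,]$: $\mathrm{Ext}^1$ is the space of first-order deformations and $\mathrm{Ext}^2$ the obstruction space. Its spectral sequence presents the obstruction space as an extension of $\mathrm{Ext}^2_{\calO_\Sigma}(\Omega_\Sigma,\calO_\Sigma)$ by a quotient of $H^1(\Sigma,\gamma^*T_M)$; the first term vanishes because a nodal curve is a local complete intersection of dimension one (the only conceivable contribution, $H^1$ of the node-supported sheaf of node-smoothing deformations, dies on a curve). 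Hence smoothness of $StMap_\beta(\PP^1,M)$ at $[\Sigma,\gamma]$ follows from the vanishing $H^1(\Sigma,\gamma^*T_M)=0$, and Riemann--Roch on $\Sigma$ then fixes the local dimension at $\dim M+\int_\beta c_1(T_M)-3$ (the marked-point-free genus-zero domain contributing the customary $3g-3=-3$).

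\textbf{Step 3 (the vanishing, from convexity).} It remains to prove $H^1(\Sigma,\gamma^*T_M)=0$ for $M$ convex and $\Sigma$ of arithmetic genus $0$. I would first upgrade convexity to global generation of pulled-back tangent bundles: given $f\colon\PP^1\to M$, write $f^*T_M\cong\bigoplus_i\calO(a_i)$ (Grothendieck); precomposing $f$ with a degree-two cover $\PP^1\to\PP^1$ replaces this by $\bigoplus_i\calO(2a_i)$, whose $H^1$ vanishes by convexity, forcing $2a_i\ge -1$ and hence $a_i\ge 0$, so $f^*T_M$ is globally generated. (For $M=G/P$ this step is free: $T_{G/P}$ is a quotient of the trivial bundle $\lie g\otimes\calO_{G/P}$, hence it and all of its pullbacks are globally generated.) Now induct along the dual tree of $\Sigma$: write $\Sigma=\Sigma'\cup_p C$ with $C\cong\PP^1$ a leaf meeting $\Sigma'$ in a single node $p$, and put the normalization sequence $0\to\gamma^*T_M\to\gamma^*T_M|_{\Sigma'}\oplus\gamma^*T_M|_C\to(T_M)_p\to 0$ into cohomology. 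Here $H^1(C,\gamma^*T_M|_C)=0$ (base case: a globally generated bundle on $\PP^1$ is a sum of $\calO(a)$'s with $a\ge 0$), $H^1(\Sigma',\gamma^*T_M|_{\Sigma'})=0$ (induction), and $H^0(C,\gamma^*T_M|_C)\onto(T_M)_p$ already by global generation; the long exact sequence then kills $H^1(\Sigma,\gamma^*T_M)$.

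\textbf{Step 4 (conclude).} At every point the obstruction group vanishes (Steps 2--3) and the automorphism group is finite (Step 1), so $StMap_\beta(\PP^1,M)$ is a smooth Deligne--Mumford stack --- a smooth orbifold --- of the stated dimension. The only hard labor is Step~1; the input specific to convex $M$, Steps 2--3, occupies a few lines once the deformation complex has been set up.
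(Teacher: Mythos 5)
This statement is not proved in the paper at all: it is quoted from Fulton--Pandharipande, and the only commentary offered is the footnote about \cite{FP} gluing quotients of smooth varieties by finite groups while punting on stacks. So the right comparison is with the argument in \cite{FP}, and your write-up is a correct reconstruction of it. The content is the same: the decisive input is the vanishing $H^1(\Sigma,\gamma^*T_M)=0$ for genus-zero nodal $\Sigma$, deduced from convexity exactly as you do (the degree-two-cover trick to get global generation of $f^*T_M$ on each $\PP^1$, then induction over the tree via the normalization sequence, using global generation to make $H^0$ on a leaf surject onto the fibre at the node); this is FP's convexity lemma verbatim. The packaging differs: you phrase smoothness via the two-term deformation complex $[\gamma^*\Omega_M\to\Omega_\Sigma]$ and the vanishing of its $\mathrm{Ext}^2$ (the Behrend--Manin/Behrend--Fantechi formulation), whereas \cite{FP} never invoke stacky obstruction theory --- they rigidify by hyperplane sections, exhibit explicit charts that are quotients of smooth quasi-projective varieties by finite groups, and feed the same $H^1$ vanishing into the smoothness of those charts. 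Your route buys a cleaner logical separation (Step 1 existence, Steps 2--3 smoothness) at the price of assuming the modern foundational machinery; FP's route is self-contained but, as the footnote notes, leaves the orbifold/stack bookkeeping to the reader. Either way your Steps 2--4 are sound, including the dimension count $\dim M+\int_\beta c_1(T_M)-3$.
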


At each node $p$ of a genus zero stable curve $\Sigma$, the two components $C,D$
of $\Sigma$ that meet at $p$ are distinct (i.e. unlike on a nodal elliptic curve).
We relate the two tangent lines $T_p C$, $T_p D$ in the next subsection.

\subsection{Torus actions}\label{ssec:torusactions}

When $T \actson M$, we get an action $T \actson StMap_\beta(\PP^1,M)$ on
each stable map space,\footnote{To be more stackily precise
  as in \cite[A.6]{Adeel2}, this $T$-action
  is on the coarse moduli space of the stack of stable maps, and it is in this
  sense that we refer to ``fixed points''. One could consider the
  proposition that follows as constructing a group $\wt T$ that acts on
  the {\em stack} of stable maps not just on its coarse moduli space.
\junk{  in which case what we call ``$T$-fixed points'' are not necessarily
  $\wt T$-fixed.}}
defined by $t\cdot [\Sigma, \gamma] := [\Sigma, t \circ \gamma]$.
For a point to be fixed by some $t$, we need an isomorphism
$\phi_t\colon \Sigma\to\Sigma$ fitting into a commuting triangle

% https://q.uiver.app/#q=WzAsMyxbMCwwLCJcXFNpZ21hIl0sWzAsMSwiXFxTaWdtYSJdLFsxLDEsIk0iXSxbMCwyLCJcXGdhbW1hIl0sWzEsMiwiXFxnYW1tYSIsMl0sWzAsMSwiXFx2YXJwaGlfdCIsMl1d
\[\begin{tikzcd}
    \Sigma \\
    \Sigma & M
    \arrow["{\varphi_t}"', from=1-1, to=2-1]
    \arrow["\gamma", from=1-1, to=2-2]
    \arrow["t\,\circ\,\gamma"', from=2-1, to=2-2]
  \end{tikzcd}\]
By the stability assumption, there are only finitely many choices of
such $\phi_t$.

\begin{Proposition}\label{prop:extension}
  The $T$-fixed points $[\Sigma,\gamma]$ in $StMap_\beta(\PP^1,M)$
  correspond to curves $\Sigma$ bearing an action of a finite extension
  torus $\wt T \onto T$, such that the map $\gamma$ is $\wt T$-equivariant.
  Indeed, there is a single extension $\wt T\onto T$ that suffices for all
  the $T$-fixed points in $StMap_\beta(\PP^1,M)$. 
\end{Proposition}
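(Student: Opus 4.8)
The plan is to realize the desired extension as an automorphism group, and then to replace the point-dependent extension by a single universal one using the combinatorial classification of $T$-fixed stable maps. First I would treat one fixed point $[\Sigma,\gamma]$. Since it is $T$-fixed, for every $t\in T$ there is at least one $\phi_t\in\mathrm{Aut}(\Sigma)$ with $t\circ\gamma=\gamma\circ\phi_t$, so the group $G_\Sigma:=\{(t,\phi)\in T\times\mathrm{Aut}(\Sigma) : t\circ\gamma=\gamma\circ\phi\}$ surjects onto $T$ via the first projection, with kernel the automorphism group $\mathrm{Aut}(\Sigma,\gamma)$, which is finite by the stability assumption. As $\Sigma$ is an at-worst-nodal genus-zero curve, $\mathrm{Aut}(\Sigma)$ is a linear algebraic group, hence so is $G_\Sigma$, and I would check that $\wt T_\Sigma:=G_\Sigma^\circ$ is a torus: it is connected and affine; its commutator subgroup is connected and maps to $[T,T]=\{1\}$, hence lies in the finite kernel, hence is trivial, so $\wt T_\Sigma$ is commutative; and a connected commutative affine group over $\CC$ is a product of a torus and a unipotent group, whose unipotent factor maps trivially to $T$, hence lies in the finite kernel, hence is trivial. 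Thus $\wt T_\Sigma$ is a torus, it acts on $\Sigma$ through the second projection, it surjects onto $T$ with finite kernel, and $\gamma$ is $\wt T_\Sigma$-equivariant by construction. Conversely, if $\Sigma$ carries an action of some finite extension $\wt T\onto T$ for which $\gamma$ is equivariant, then $[\Sigma,\gamma]$ is $T$-fixed: lift each $t$ to $\wt T$ and let $\phi_t$ be its action on $\Sigma$. This establishes the correspondence for a possibly point-dependent extension.

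Next I would produce one extension $\wt T\onto T$ that works for all $T$-fixed points, using the structure of $T$-fixed stable maps into $M=G/P$. A torus (such as $\wt T_\Sigma$) acting on the finite set of irreducible components of $\Sigma$ fixes each of them, so each component $C$ has $T$-invariant image $\gamma(C)$: either a $T$-fixed point or one of the finitely many irreducible $T$-stable curves $\ell\subseteq G/P$ (each a rational curve with exactly two $T$-fixed points, on which $T$ acts through a character $\chi_\ell$ of a rank-one quotient). Moreover $\gamma$ has bounded total degree $\beta$, so each non-collapsed component maps to such an $\ell$ with degree $d\le N$ for an explicit $N=N(\beta)$. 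The key local observation is that a $\wt T$-equivariant degree-$d$ cover $C\to\ell$ exists exactly when $\tfrac1d\chi_\ell\in X^*(\wt T)\subseteq X^*(T)\tensor\QQ$ (concretely, in suitable coordinates the cover is $[u:v]\mapsto[u^d:v^d]$, and $\wt T$ must scale $u$ by $\tfrac1d\chi_\ell$). I would therefore set $X^*(\wt T):=X^*(T)+\sum_{\ell}\sum_{1\le d\le N}\ZZ\cdot\tfrac1d\chi_\ell\subseteq X^*(T)\tensor\QQ$, a finite-index overgroup of $X^*(T)$, and let $\wt T$ be the corresponding torus with its isogeny to $T$.

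It then remains to verify that this universal $\wt T$ does the job. For the converse direction, any $\wt T$-equivariant $[\Sigma,\gamma]$ is $T$-fixed as above. For the forward direction, given a $T$-fixed $[\Sigma,\gamma]$, I would build a $\wt T$-action on $\Sigma$ making $\gamma$ equivariant: let $\wt T$ act trivially on each collapsed component and, on each non-collapsed $C\to\ell$ of degree $d$, act through $\tfrac1d\chi_\ell\in X^*(\wt T)$ as above; since every node of $\Sigma$ maps to a $T$-fixed point of $G/P$, it is a $T$-fixed point of each component through it, so these component-wise actions agree on the nodes and glue to an action of $\wt T$ on $\Sigma$ for which $\gamma$ is visibly $\wt T$-equivariant. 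The main obstacle is the middle step: justifying rigorously that $T$-fixed stable maps into $G/P$ have exactly this decorated-tree shape, with the relevant curves $\ell$ and degrees $d$ drawn from a fixed finite set — this is the classification underlying the localization theorem (cf.\ Graber--Pandharipande), and it rests on $G/P$ having only finitely many irreducible $T$-stable curves — together with the routine bookkeeping of the gluing; everything else is soft.
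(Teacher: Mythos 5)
Your first step coincides with the paper's: the paper also forms the group of pairs $\{(t,\varphi_t)\}$, notes it surjects onto $T$ with finite kernel by stability, and takes the identity component as $\wt T$; you merely supply the (correct, and welcome) details that this identity component is a torus, plus the easy converse. The divergence is in the uniformity claim, and there your argument has a genuine gap. The paper gets a single $\wt T$ from the structure of the moduli space itself: the Fulton--Pandharipande construction glues finitely many charts $U/\Gamma$, the automorphism groups stratify each $U$ into finitely many strata, and one takes the identity component of the fiber product of the finitely many extensions that occur — an argument insensitive to the geometry of $T$-invariant curves in $M$. You instead build $X^*(\wt T)$ from the characters $\chi_\ell$ of the finitely many irreducible $T$-invariant curves $\ell\subseteq M$ and the bounded degrees $d\le N(\beta)$. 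That finiteness is what fails: the proposition is stated for an arbitrary projective $M$ with a torus action, and in this paper it is applied precisely when the acting torus is a one-parameter subgroup such as $\rhocek$ (see \S\ref{ssec:beta}, where $\rhotilde$ is introduced as the cover provided by this proposition). For such a circle action the invariant curves form positive-dimensional families — the paper's own $\PP^2$ example exhibits a one-parameter family of $\rhocek$-invariant conics — so "sum over the finitely many $\ell$" has no meaning, and your definition of $X^*(\wt T)$ does not get off the ground in the cases actually needed.

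The gap is repairable, but not for free: what you really need is that the set of rational characters $\tfrac1d\chi$ arising from non-collapsed components is finite, and for that it suffices that (i) the degrees are bounded by $\langle\beta,\text{ample}\rangle$ and (ii) the generic $S$-stabilizer orders on invariant curves (equivalently the weights of $S$ on their normalizations) take only finitely many values — the latter following from the general fact that a torus action on a variety has only finitely many distinct isotropy subgroups, not from finiteness of invariant curves. You should also note that your assertion "every node maps to a $T$-fixed point" is not automatic from $T$-fixedness of $[\Sigma,\gamma]$; it uses the pointwise extension $\wt T_\Sigma$ from your first step (a connected group fixes each component, hence each node, and equivariance then forces the image to be fixed), so the two halves of your argument must be run in that order. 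With those repairs your construction would give a correct, more explicit alternative to the paper's chart-counting argument for the spaces considered here, at the price of generality; as written, the universality step proves only the case of $G/P$ with its maximal torus.
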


To see an example where this extension is necessary, consider
$T = \Gm$ acting on $\PP^1$ in the usual way, and $\gamma\colon \PP^1 \to \PP^1$
being the $z\mapsto z^6$ map. There is no way to lift the action without
first extending the group (by at least $\ZZ/6$).

\begin{proof}
  For each point $[\Sigma,\gamma]$ ($T$-fixed or not), the set
  $\{(t,\varphi_t)\colon t\in T$, $\varphi_t$ fits in the commuting
  triangle above$\}$ forms a group $G$ acting on $\Sigma$.
  The projection $G \to T$ is onto exactly if $[\Sigma,\gamma]$
  is $T$-fixed, in which case it is a finite extension of $T$
  (by the stability assumption).
  
  Assume that now, and let $\wt T$ be the identity component of $G$.
  This is a finite connected extension of $T$, hence (at least in
  characteristic $0$) is a torus itself.

  For the second claim, observe that the construction of
  $StMap_\beta(\PP^1,M)$ in \cite[\S 4]{FP} involves gluing finitely
  many quotients $U/\Gamma$ together, each a smooth quasiprojective
  variety divided by a finite group. The stabilizers (which become
  the automorphism groups in the stack) define a finite stratification
  of each $U$. So there are finitely many strata to consider, on
  each of which there is a single finite extension. Take $\wt T$ to be
  the identity component of the fiber product of all of those extensions.
\end{proof}

In the rest of the paper we will put tildes on our tori to designate
one of these extensions; in particular, be on the lookout for $\rhotilde$
a finite extension of $\rhocek$.

If $H \trianglelefteq G$ is a normal subgroup and $G \actson M$,
then $G$ will preserve and therefore act on $StMap_\beta(\PP^1,M)^H$.
If $G$ is connected, then it will separately preserve each component
of $StMap_\beta(\PP^1,M)^H$. We will use these facts later, where $G$
will be a torus and hence every subgroup $H\leq G$ will be normal.

\junk{
Here's an example showing why the finite extensions are important
to consider. Let $M = StMap_{2[\PP^1]}(\PP^1,\PP^2)$ be the space of
degree $2$ stable maps to $\PP^2$, and $\gamma_f\colon \PP^1 \to \PP^2$
take $[a,b] \mapsto [a^2, fab, b^2]$. Let $T$ be the $1$-torus
acting on $\PP^2$ by $t \cdot [x,y,z] := [x, ty, t^2z]$.
Then for $f\neq 0$ we can lift the $T$-action to $\PP^1$ by
$t \cdot [a,b] := [a,tb]$, making the diagram
$$
\begin{matrix}
  [a, b]      &\mapsto& [a, tb]     \\
  \downmapsto &       & \downmapsto \\
  [a^2, 
\end{matrix}
$$
}
\begin{Proposition}\label{prop:dual}
  Let $[\Sigma,\gamma] \in StMap_\beta(\PP^1,M)^T$ be an \defn{equivariantly
    smoothable} stable curve, i.e. some stable map $[\Sigma',\gamma']$
  in a geometric component of $StMap_\beta(\PP^1,M)^T$ containing
  $[\Sigma',\gamma']$ is irreducible.
  Then at each node $p$ of $\Sigma$, the $\wt T$-weights on the tangent
  spaces $T_p C$, $T_p D$ to the two components containing $p$
  are dual as $\wt T$-representations. (Here $\wt T \actson \Sigma$ is the
  extension of $T$ guaranteed by proposition \ref{prop:extension}.)
\end{Proposition}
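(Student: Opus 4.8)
The plan is to reduce the general case to the case of a single irreducible smoothable degeneration and then to a local normal-form computation near the node. First I would use Proposition~\ref{prop:extension} to fix once and for all the finite extension $\wt T \onto T$ carrying an action on every $T$-fixed curve in $StMap_\beta(\PP^1,M)$, so that $\gamma$ is $\wt T$-equivariant; the statement to be proved is then purely about the $\wt T$-action on $\Sigma$ near a node $p$. Let $C,D$ be the two components meeting at $p$. Since $[\Sigma,\gamma]$ lies in a geometric component of $StMap_\beta(\PP^1,M)^T$ that also contains an irreducible stable map $[\Sigma',\gamma']$, and components of the fixed locus are themselves proper (being closed in a proper space), I would pick a connected curve — a $\PP^1$ or a chain — inside that fixed component joining $[\Sigma,\gamma]$ to $[\Sigma',\gamma']$, and restrict the universal curve and universal map over it. Over the generic point this family is irreducible; $[\Sigma,\gamma]$ appears as a special fiber in which the node $p$ has formed. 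All of this family is $\wt T$-fixed in $StMap_\beta$, hence (by the same commuting-triangle argument as in the proof of Proposition~\ref{prop:extension}) the total space of the universal curve carries a $\wt T$-action compatible with $\gamma$, and $\wt T$ acts on the base trivially.

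The heart of the argument is the local picture at the smoothing of the node. Near $p$, the total space of the universal curve over a small (analytic or formal) disc $\Spec k[[s]]$ in the base looks like the surface singularity $xy = s$ in variables $x,y$ with $x$ a coordinate on $C$ and $y$ a coordinate on $D$ at $p$, because a node of a curve in a one-parameter smoothing family always has this standard versal form. The $\wt T$-action fixes the base pointwise, hence fixes $s$; so $\wt T$ acts on $x$ and $y$ by characters $\chi$ and $\chi'$ with $\chi + \chi' = 0$ as $\wt T$-characters (reading additively), i.e. $\chi' = -\chi$. But $x$ is (up to a unit, which can be absorbed) a $\wt T$-eigen-coordinate on $C$ vanishing at $p$, so $\chi$ is exactly the weight of $\wt T$ on $T_p C$; likewise $\chi' = -\chi$ is the weight on $T_p D$. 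Hence $T_p C$ and $T_p D$ are dual one-dimensional $\wt T$-representations, which is the assertion. (The fact that $\wt T$-weights are well defined on these tangent lines uses that $\wt T$ is a torus, guaranteed in characteristic $0$ by Proposition~\ref{prop:extension}.)

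The main obstacle I anticipate is the stacky/orbifold bookkeeping in the first step: making precise that ``lying in a component of $StMap_\beta(\PP^1,M)^T$ containing an irreducible map'' really does produce a genuine one-parameter smoothing family of $\Sigma$ in which the specific node $p$ is the one being smoothed, rather than some other node or a combination of them. One has to argue that along a generic path into the irreducible locus the curve degenerates by acquiring nodes one at a time (semistable reduction over the base curve of the fixed component), and that $p$ is among the nodes so acquired; equivalently, that the corresponding boundary divisor of $\overline{M}_{0,0}$-type in the stable-map space is the one whose generic point we approach. This is where I would spend the most care, possibly by passing to the normalization of an irreducible curve in the fixed component through $[\Sigma,\gamma]$ and using that the boundary divisors of $StMap_\beta$ parametrizing a fixed node are smooth with the expected local equation. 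Everything after that — the $xy=s$ normal form and the character computation — is routine and local.
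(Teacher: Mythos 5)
Your argument is essentially correct, but it takes a genuinely different route from the paper. The paper's proof is a two-line reduction: it quotes the finite-group statement of \cite[Remark 2.1.4, Lemma 2.3.5]{WebbEtAl} --- for a finite group acting on a nodal curve, preserving a node and its two branches, the branch characters at the node are mutually inverse --- and then upgrades from finite subgroups to $\wt T$ using the Zariski density of the union of the finite subgroups of a torus. You instead re-derive the statement from scratch via a one-parameter smoothing and the formal local model $xy=a(s)$ at the node, with $a$ invariant because $\wt T$ acts trivially on the base (an argument the author in fact drafted and left commented out in the source). Your route is more self-contained and makes the geometric mechanism visible; the citation-plus-density route buys freedom from the stacky bookkeeping of equipping a whole family, rather than a single fiber, with a $\wt T$-action.

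Two adjustments to your own assessment of where the difficulties lie. The obstacle you call the main one --- making sure the particular node $p$ is the one being smoothed --- dissolves: an irreducible stable map of genus zero has smooth domain $\PP^1$ (an irreducible nodal curve of arithmetic genus $0$ would have negative geometric genus), so once the germ $\Spec \CC[[s]] \to StMap_\beta(\PP^1,M)^T$ through $[\Sigma,\gamma]$ has its generic point in the open irreducible locus, the generic fiber is smooth and \emph{every} node of $\Sigma$ is smoothed simultaneously; no ``one node at a time'' analysis or identification of boundary divisors is needed. (Do note that an arbitrary connected chain joining $[\Sigma,\gamma]$ to $[\Sigma',\gamma']$ will not do, since its branch through $[\Sigma,\gamma]$ could lie entirely in the nodal boundary; one needs the standard curve-selection/normalization trick so the generic point lands in the irreducible locus. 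Also the local equation is $xy=a(s)$ with $a$ a nonzero element of the maximal ideal, not necessarily a uniformizer --- harmless, since only $a\neq 0$ and its $\wt T$-invariance are used.) The step that actually deserves the care is the one you dispatch in a clause: that $\wt T$ acts on the total space of the restricted universal curve, compatibly with $\gamma$ and trivially on the base. Proposition \ref{prop:extension} produces the action point by point (stratum by stratum of the fixed locus), and your germ crosses strata, so one must check these actions glue over the family --- or else run your local computation only for finite subgroups of $\wt T$ and invoke density, which is exactly the paper's proof. With that point supplied, your argument goes through.
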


\junk{
\begin{proof}
  Pick the germ $\alpha\colon S\to StMap_\beta(\PP^1,M)^T$ of a smooth
  curve $S$ (the $\Spec$ of a discrete valuation ring), such that
  $S$'s closed point $s_0$ goes through $[\Sigma,\gamma]$ and $S$'s
  generic point maps to the generic point of $StMap_\beta(\PP^1,M)^T$.
  (This is a standard trick -- first pass to an affine neighborhood
  of $[\Sigma,\gamma]$, then cut down to something $1$-dimensional,
  take its normalization to smooth it, and finally take a completion.)

  Pull back the universal family on $StMap_\beta(\PP^1,M)$ to a family
  over $S$ of stable curves. In the closed fiber over $s_0$ we have
  the node $p$. Shrink the family to a formal neighborhood $F$
  of that node; now it is a formal smoothing $F \to S$ of the node.
  We use the Stacks Project lemma 53.20.11 to put formal local
  co\"ordinates $(x,y) \mapsto xy$ on the map $F \to S$.

  At this stage $\wt T$ acts on $F$ and acts trivially on $S$, and the
  map $F \to S$ is $\wt T$-invariant. Since the map is $\wt T$-invariant,
  the weights of $x,y$ (which show up on the special fiber $xy=0$)
  must be negative one another.
\end{proof}
}

\begin{proof}
  This is proven for finite groups in \cite[Remark 2.1.4, Lemma
  2.3.5]{WebbEtAl}. From there use the fact that union of the finite
  subgroups of $T$ is Zariski dense. (A more direct reason will
  show up in \S\ref{ssec:analyzing}(2).)
\end{proof}

\begin{Corollary}\label{cor:chain}
  If $\gamma \colon \Sigma \to M$ is a $\wt T$-equivariantly smoothable
  stable map of degree $\beta\neq 0$, and $M^{\wt T}$ is finite,
  then $\Sigma$ is a {\em chain} of $\PP^1$s (not just a tree), and
  all components are $\wt T$-isomorphic, each with two $\wt T$-fixed points
  and isotropy weights $+\alpha,-\alpha$ for some fixed $\alpha$.
  Also, no component of $\Sigma$ is collapsed to a point by $\gamma$.
\end{Corollary}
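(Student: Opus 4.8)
The plan is to argue entirely on the genus-zero curve $\Sigma$ equipped with the $\wt T$-action of Proposition~\ref{prop:extension} (with respect to which $\gamma$ is equivariant), the main tool being the node-duality of Proposition~\ref{prop:dual}. Three elementary facts do the work. (a) Since $\wt T$ is connected it preserves every irreducible component of $\Sigma$, hence fixes the finite set of nodes pointwise, so every node of $\Sigma$ is a $\wt T$-fixed point. (b) A torus acting on a $\PP^1$ either acts trivially or has exactly two fixed points, at which the tangent weights are a nonzero character and its negative; in particular the action is trivial precisely when some $\wt T$-fixed point of the $\PP^1$ has tangent weight $0$. (c) If $\wt T$ acts trivially on a component $C$, then $\gamma|_C$ is constant, since its image is a connected subset of the finite set $M^{\wt T}$.

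The heart of the matter is to show that $\wt T$ acts nontrivially on every component of $\Sigma$. Let $Z$ be the set of components on which $\wt T$ acts trivially. If $C\in Z$ and a component $D$ meets $C$ at a node $p$, then $T_pC$ is a trivial $\wt T$-representation, so by Proposition~\ref{prop:dual} --- and this is the one point where $\wt T$-equivariant smoothability is used --- $T_pD$ is trivial too, whence $D\in Z$ by (b). Thus $Z$ is a union of connected components of the dual graph of $\Sigma$, which is connected (indeed a tree, since $\Sigma$ has genus zero); hence $Z$ is empty or everything. If $Z$ were everything then $\gamma$ would be constant on $\Sigma$ by (c), forcing $\beta=0$, contrary to the assumption $\beta\neq 0$. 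So $Z=\emptyset$: every component carries a nontrivial $\wt T$-action, hence by (b) exactly two $\wt T$-fixed points and tangent weights $\pm\alpha_C$ with $\alpha_C\neq 0$.

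Now I would assemble the conclusions. By (a), the nodes on a given component are among its two fixed points, so each component meets at most two others; the dual tree of $\Sigma$ then has every vertex of valence $\le 2$, so it is a path, i.e.\ $\Sigma=C_1\cup\cdots\cup C_r$ is a chain. No $C_i$ is collapsed by $\gamma$, for stability would then require $C_i$ to meet at least three other components. Finally, at the node joining $C_i$ and $C_{i+1}$ the two tangent weights are $\pm\alpha_i$ and $\pm\alpha_{i+1}$, and Proposition~\ref{prop:dual} makes them negatives of each other, so $\{\alpha_i,-\alpha_i\}=\{\alpha_{i+1},-\alpha_{i+1}\}$; inductively all components share one weight pair $\{+\alpha,-\alpha\}$, and since a $\PP^1$ with torus action is determined up to equivariant isomorphism by that pair, all the $C_i$ are $\wt T$-isomorphic. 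The step I expect to require the most care is the middle one: one must make sure Proposition~\ref{prop:dual} really does apply at every node of $\Sigma$ (the whole curve lying in one geometric component of the $\wt T$-fixed locus that also contains an irreducible map), and that tangent weight $0$ at a fixed point of a $\PP^1$ genuinely forces the action there to be trivial --- these are precisely the inputs that make collapsed and branching configurations impossible.
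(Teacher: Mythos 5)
Your proof is correct and follows essentially the same route as the paper's: propagate triviality of the $\wt T$-action across nodes via Proposition \ref{prop:dual} to rule it out using $\beta\neq 0$ and $M^{\wt T}$ finite, use the two-fixed-point property of a nontrivial torus action on $\PP^1$ to bound each component by two nodes (your tree/valence argument replaces the paper's explicit exclusion of a ring by the genus condition, a cosmetic difference), then use node duality again to equalize the weight pairs, and invoke stability for the no-collapsing claim. No gaps.
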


Hereafter we'll call such a chain of $\PP^1$s, with $\wt T$
acting the same way on each component, a \defn{caterpillar}.

\begin{proof}
  If $\wt T$ acts trivially on some component of $\Sigma$, then by the
  proposition it acts trivially on all neighboring components (meaning,
  those it intersects), hence by connectedness acts trivially
  on all of $\Sigma$. By the $\wt T$-equivariance of $\gamma$,
  its image lies in $M$'s fixed points; since they are by assumption
  isolated, the map is constant.  But then $\beta = 0$, contrary
  to assumption.

  Now we know
  $\wt T$ acts nontrivially on each component $C\iso \PP^1$ of $\sigma$.
  The maximal tori in $Aut(C) \iso PGL_2$ are conjugate, and each
  fixes exactly two points. So $C$ touches at most two nodes,
  hence can be connected to at most two
  other components. Being connected, the only options for $\Sigma$ is
  a chain of components (where the end components each connect to
  only one other) or a ring (where every component connects to two others).
  Such a ring would be genus one, and is thus excluded. This establishes
  the first claim.

  Let $C$ be one component of $\Sigma$; the local co\"ordinates
  around the $\wt T$-fixed points ``$0$'' and ``$\infty$'' are related
  by $z\mapsto z^{-1}$, hence the isotropy weights are negative one
  another, say $\alpha$ and $-\alpha$. If another component $D$ is
  glued to $C$ at $0$, then its tangent weight there will be $-\alpha$
  by proposition \ref{prop:dual}. Hence it too will have isotropy
  weights $\pm \alpha$, so by induction and connectedness all
  components of $\Sigma$ will have isotropy weights $\pm \alpha$ and
  hence will be $\wt T$-isomorphic.

  For the no-collapsing statement, recall that in a {\em stable} map
  $\gamma\colon \Sigma \to M$ a component $C \subseteq M$ can only
  collapse if $C$ meets at least three other components. Caterpillars
  have no such components $C$.
\end{proof}

Given an isomorphism $T \iso \Gm$, whose weights are therefore
indexable by $\ZZ$, we can distinguish the two
$T$-fixed points $p$ on a caterpillar that are {\em not} nodes by whether
the $\Gm$ is acting with positive or negative weight on $T_p \Sigma$,
and call them the \defn{back} or \defn{front} of the caterpillar
respectively. We record a couple of results about them:

\begin{Lemma}\label{lem:ends}
  Continue the setup of corollary \ref{cor:chain}, but with a fixed
  isomorphism $T \iso \Gm$. 
  Let $[\Sigma,\gamma] \in M//_{StMap}\, T$
  with $b,f$ the back and front of the caterpillar $\Sigma$.

  \begin{enumerate}
  \item Let $\overline \beta \in A^{\wt T}_1(M)$ be the {\em equivariant}
    class $\gamma_*([\Sigma])$. Then the point restrictions
    $\overline\beta|_p$, $p\in M^T$ are nonzero exactly for
    $p \in \{\gamma(b),\gamma(f)\}$.
  \item 
    Also, if the two Bia\l ynicki-Birula decompositions defined from $T$
    are stratifications, then $\gamma(\Sigma)$ lies in 
    $$ \overline{ \left\{m \in M\colon
        \lim_{z\cdot 0} z\cdot m = \gamma(b) \right\} } \ \cap\
    \overline{ \left\{m \in M\colon
        \lim_{z\cdot \infty} z\cdot m = \gamma(f)  \right\} } $$
  \end{enumerate}
\end{Lemma}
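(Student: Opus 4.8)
We have a caterpillar $\Sigma = C_1 \cup C_2 \cup \cdots \cup C_k$, a chain of $\PP^1$s, with $\wt T$ acting on each $C_i$ with the same weights $+\alpha, -\alpha$ (for some fixed primitive $\alpha$), and with $\gamma\colon\Sigma\to M$ a $\wt T$-equivariant map collapsing no component, by Corollary \ref{cor:chain}. Order the components so that the back $b$ lies on $C_1$ and the front $f$ lies on $C_k$; let $q_0 = b, q_1, \ldots, q_{k-1}$ be the nodes and $q_k = f$, with $q_{i-1}, q_i$ the two $\wt T$-fixed points of $C_i$. The plan for (1) is a telescoping computation in equivariant Chow: the equivariant class $\overline\beta = \gamma_*[\Sigma] = \sum_i \gamma_*[C_i]$. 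For a single $C_i\iso\PP^1$ with isotropy weights $\pm\alpha$ at its two fixed points, $\gamma|_{C_i}$ is a finite $\wt T$-equivariant map onto its (nonconstant, since nothing collapses) image curve, and by the $\wt T$-equivariant localization / self-intersection formula the class $\gamma_*[C_i]$ restricts at a fixed point $p\in M^{\wt T}$ to $\frac{1}{\alpha}$ times the fundamental class of the image line at $\gamma(q_{i-1})$ minus the analogous thing at $\gamma(q_i)$ — concretely, $\gamma_*[C_i]\big|_p$ is supported on $p\in\{\gamma(q_{i-1}),\gamma(q_i)\}$ and is $\pm d_i\,[\text{line}]/\alpha$ there, where $d_i$ is the degree of $\gamma|_{C_i}$ onto its image. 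Summing over $i$, the contributions at each interior node $\gamma(q_i)$ come with opposite signs from $C_i$ and $C_{i+1}$ and may cancel; but at $\gamma(b)$ only $C_1$ contributes and at $\gamma(f)$ only $C_k$ contributes, with nonzero coefficient $d_1, d_k$ respectively. Since $d_1, d_k \geq 1$ (no collapsing) these survive after pushing to the non-equivariant limit of $A^{\wt T}_1(M)$, and restriction to $M^{\wt T}$ factors through $M^T$ since $\wt T\onto T$; hence $\overline\beta|_p \neq 0$ for $p\in\{\gamma(b),\gamma(f)\}$, and $\overline\beta|_p = 0$ otherwise. (One must separately rule out $\gamma(b) = \gamma(f)$ forcing accidental cancellation there; but $\gamma(b)$ has $\wt T$-tangent weight $+\alpha$ into $C_1$ while $\gamma(f)$ has $-\alpha$ into $C_k$, so the two image-line classes at a common point are distinct, and $d_1/\alpha + d_k/(-\alpha)$ — no, even here the image lines differ, so no cancellation.)

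For part (2), fix the isomorphism $T\iso\Gm$, so weights are integers and $\alpha$ is, say, positive; then the $\Gm$ acts on $T_b\Sigma = T_{q_0}C_1$ with positive weight and on $T_f\Sigma = T_{q_k}C_k$ with negative weight, which is the defining property of back and front. The key geometric input is this: for any $\wt T$-equivariant $\PP^1\hookrightarrow M$ (more precisely, finite map) whose two fixed points $p_-, p_+$ carry tangent weights $+\alpha, -\alpha$ respectively, every point $m$ on the image curve other than $p_+$ satisfies $\lim_{z\to 0} z\cdot m = p_-$ (it flows to the source of the $\Gm$-action on that line), and symmetrically $\lim_{z\to\infty} z\cdot m = p_+$ for $m\neq p_-$. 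So $\gamma(C_i)$ lies in the closure of the BB-stratum $\{m: \lim_{z\to 0} z\cdot m = \gamma(q_{i-1})\}$. The plan is then to chain these together using the hypothesis that the two BB decompositions are \emph{stratifications}: the closure of the plus-stratum of $\gamma(q_{i-1})$ contains $\gamma(q_i)$, hence contains the closure of the plus-stratum of $\gamma(q_i)$, and inductively the closure of the plus-stratum of $\gamma(b) = \gamma(q_0)$ contains all of $\gamma(\Sigma)$; symmetrically for the minus-stratum of $\gamma(f)$. Intersecting gives the claim.

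The main obstacle is the clean statement and proof of the single-$\PP^1$ lemma used in both parts — i.e. that a nonconstant $\wt T$-equivariant map $\PP^1\to M$ with tangent weights $\pm\alpha$ has image a $\Gm$-invariant rational curve joining $p_-$ to $p_+$ with the stated BB-flow behavior, and that its pushforward class restricts as claimed. The flow statement is essentially the observation that $\gamma|_{C_i}$ intertwines the standard $\Gm$-action on $\PP^1$ (via the chosen isomorphism $T\iso\Gm$ and the fact that $\gamma$ is equivariant for the extension $\wt T$, which surjects onto $T$) with the action on $M$, so BB-limits are computed upstairs on $\PP^1$ where they are trivial; the subtlety is purely that $\wt T\to T$ is only an isogeny, so one should check the weight $+\alpha$ versus $-\alpha$ distinction is preserved under restriction along $\wt T\onto T$ (it is, since an isogeny of $1$-tori preserves the sign of a weight). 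The pushforward-restriction computation is the $\wt T$-equivariant version of the familiar fact that a line $\ell\subseteq M$ through a fixed point $p$ has $[\ell]|_p = (\text{tangent weight of }\ell\text{ at }p)^{-1}\cdot[\text{pt}]$ in localized equivariant Chow, combined with the projection formula for the finite map $\gamma|_{C_i}$; I expect this to be routine given equivariant localization but it is the step that deserves the most care in the write-up.
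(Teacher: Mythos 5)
Your proposal is correct and follows essentially the same route as the paper: part (1) is the telescoping localization computation in (localized) equivariant Chow, where the interior node contributions cancel and only the ends survive with unit of the Euler class over $\alpha$, and part (2) is the observation that each image curve flows into the image of its back, chained along the caterpillar by induction using the stratification hypothesis exactly as in the paper. Your component-by-component bookkeeping with degrees $d_i$ and image tangent weights $d_i\alpha$ just re-derives the paper's cleaner identity $[C]=([b_C]-[f_C])/\alpha$ upstairs on $\Sigma$, and your parenthetical worry about $\gamma(b)=\gamma(f)$ is a subtlety the paper does not address either.
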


\begin{proof}
  Since our goal is to compute elements in the domain $A_T(pt)$
  (a polynomial ring), we can safely localize,
  i.e. tensor all rings considered with the fraction field of $A_T(pt)$.
  Let $\alpha$ be the $\wt T$-weight on $T_b \Sigma$;
  then (using corollary \ref{cor:chain})
  on any component $C$ of $\Sigma$ with back and front $b_C,f_C$,
  we have $[C] = ([b_C] - [f_C])/\alpha$. Summing over the components,
  all contributions cancel except from the ends, giving
  $[\Sigma] = ([b]-[f])/\alpha$. Then
  $$ \gamma_*([\Sigma])|_p 
  = \left(\prod \{\wt T\text{-weights in }T_p M\} \right) \cdot
  \begin{cases}
    +1/\alpha & p = f \\
    -1/\alpha & p = b \\
    0 & \text{otherwise}
  \end{cases} $$
  and by the assumption $M^T$ discrete, the product upfront
  of isotropy weights is $\neq 0$.

  For the second claim, we will show by induction that
  $\gamma(C) \subseteq \overline{ \left\{m \in M\colon
      \lim_{z\cdot 0} z\cdot m = \gamma(b) \right\} }$
  for each component $C$ of $\Sigma$; the containment in
  $\overline{ \left\{m \in M\colon
      \lim_{z\cdot \infty} z\cdot m = \gamma(f) \right\} }$ follows from the same
  argument but reversing the parametrization of $T$.

  Let $M_p^\circ := \{m \in M\colon \lim_{z\to 0} z\cdot m = p\}$
  denote the B-B stratum. By the stratification assumption,
  if $\lim_{z\to 0} z\cdot m \in \overline{M_p^\circ}$
  then $m\in \overline{M_p^\circ}$. Hence when $C$ is a component $\Sigma$
  with back and front $b_C,f_C$, 
  for us to show $\gamma(C) \subseteq \overline{M_p^\circ}$,
  it suffices to show that $\gamma(b_C) \in \overline{M_p^\circ}$.
  
  The base case of the induction is that $C$ is the back component of $\Sigma$,
  i.e. the component containing $b$. Then $b_C = b$, so
  $\gamma(b_C) \in \overline{M_{\gamma(b)}^\circ}$, hence as just argued
  $\gamma(C) \subseteq \overline{M_{\gamma(b)}^\circ}$.

  From there we learn $\gamma(f_C) \in \overline{M_{\gamma(b)}^\circ}$.
  But the front $f_C$ is the back $b_{C'}$ of the next component,
  which is the induction step allowing us to eventually learn that
  every component lands inside $\overline{M_{\gamma(b)}^\circ}$.
\end{proof}

It seems likely that the stratification hypothesis in the lemma
is unnecessary, but the argument looked harder as well as unnecessary for our
main examples.

\junk{
A variant of the ``equivariantly smoothable'' condition lets us forego these
extensions $\wt T$ of $T$, which doesn't affect the proofs much but simplifies
the later notation a bit.

\begin{Proposition}\label{prop:janda}
  Let $T\actson M$, and $[\Sigma,\gamma] \in StMap_\beta(\PP^1,M)^T$
  lie in a geometric component $C$ containing curves $[\Sigma',\gamma']$
  such that
  \begin{enumerate}
  \item $\gamma'$ is injective
  \item the $T$-action on $M$ lifts to $\Sigma'$ (i.e. no extension
    $\wt T$ is necessary), with $\gamma'$ $T$-equivariant.
  \end{enumerate}
  Then the $T$-action lifts likewise to $\Sigma$, with $\gamma$ $T$-equivariant.
\end{Proposition}

\begin{proof}
  Let $\wt T$ be the extension of $T$ that acts on the component $C$.
  By assumption, the action of $\wt T$ on the
  coarse moduli space is trivial, the subject of \cite[Theorem 1.1]{AlperJanda}.
  In part 1 of that theorem they find there is a fixed finite group
  (the kernel of $\wt T \onto T$) acting as automorphism group of {\em every} point
  in $C$. By checking at $\gamma'$, one sees that this finite group is trivial.
\end{proof}
} % end junk

\subsection{The choice of $\beta$}\label{ssec:beta}

In the rest of this section we fix
a ``circle'' action $S \colon \Gm \to Aut(M)$ on a projective variety.
This action defines two opposed Bia\l ynicki-Birula decompositions,
each of which has one dense stratum (by $M$'s irreducibility);
let $U \subseteq M$ be the intersection of the two open sets.
% \junk{
%   Fix also a pair $(\lambda,\mu)$ of isolated $S$-fixed points on $M$,
%   and assume that the intersection
%   $$ U := \{m \in M\colon \lim_{z\to 0} z\cdot m = \lambda,\
%   \lim_{z\to \infty} z\cdot m = \mu \} $$
%   of opposed Bia\l ynicki-Birula strata is irreducible\footnote{The
%     empty set is neither irreducible nor reducible, much as $1$ is
%     neither prime nor composite. Also, there are non-smooth toric examples
%     in which the intersection of opposed B-B strata is reducible.}
%   (as is true in our main application, $M = G/P$).
% } % end junk
Then by properness of $M$ the map $\Gm \to M$, $z\mapsto z\cdot m$
extends uniquely to a map $\gamma\colon \PP^1 \to M$,
and for general points $m \in U$, the Chow class
$\beta := \gamma_*([\PP^1]) \in A_1(M)$ is independent of $m$.

% Our next step in the construction involves choosing a $\beta$.
In our principal application, $M$ is a flag manifold $G/P$,
and we take as our circle $\rhocek\colon \Gm \to T$
the smallest regular dominant coweight.
That is, $\rhocek$ acts on each simple root space in $\lie g$ with weight $1$.
When we need a cover of it (as in proposition \ref{prop:extension}),
we will call it $\rhotilde$.

There is a slightly different choice $\beta := \left[\,
  \overline{\Gm\cdot m}\,\right]$, which agrees with the previous if and when
$m$ is selected with trivial $S$-stabilizer. Our choice
$\beta := \gamma_*([\PP^1])$ will be slightly more convenient
(and usually they are equal).
Also, exactly the same $\beta$ is chosen in the definition of
``Chow quotient'', which we recall in \S\ref{sec:Chow}.

{\em Example: $G=SL_3$, $G/P = \PP^2$, $\lambda = [*,0,0]$, $\mu = [0,0,*]$.}
In this case the action is $\rhocek(z) \cdot [a,b,c] = [z^{-2}a, b, z^2c]$.
The open Richardson variety $U$ is $\{[a,b,c]\colon a,c\neq 0\}$.
Not every point in it has the same stabilizer:
the $\rhocek$-orbit closure through
$[1,0,1]$ is the degree $1$ curve $\{[*,0,*]\}$, whereas a general
$\rhocek$-orbit closure $\overline{\rhocek\cdot [a,b,c]}$ (no co\"ordinate
being zero) is the degree $2$ curve $\{[p,q,r]\colon q^2\, ac = pr\, b^2\}$.
So in this example we want $\beta = 2[\PP^1]$.

\subsection{The stable map quotient
  $M //_{StMap}\, S$}
\label{ssec:simpler}

Let $U \subseteq M$ be the open set from the last subsection.
Then we get an assignment
\begin{eqnarray*}
  \varphi_M\colon\qquad U&\to& Map_\beta(\PP^1, M)^S \qquad\qquad\qquad
                               \qquad\qquad\qquad 
          \into StMap_\beta(\PP^1, M)^S \\
  u &\mapsto& \left[\Sigma = \PP^1, \quad \gamma\colon
                 \begin{array}{ccr}
                   \PP^1 &\to& m \\
                   z\in\Gm &\mapsto&  S(z)\cdot u
                 \end{array}
                                     \, \right]
\end{eqnarray*}
(although $\gamma$'s definition is only written on $\Gm$,
the map $\gamma$ extends
uniquely to $\PP^1$, because $M$ is proper).
%projective hence closed and separated).
The image $\phi_M(U)$ is irreducible,
hence, lies in a unique connected component of
$StMap_\beta(\PP^1, M)^S$. Call that component the \defn{stable map quotient},
and take it for our space $M //_{StMap}\, S$.
There is one caveat: if $M$ is a point, this definition read word-by-word
will produce a point as well, but instead we prefer to define $pt //_{StMap}\, S$
to be empty. With that in place, $\dim (M //_{StMap}\, S) = \dim M - 1$.

In \S\ref{sec:Chow} we will see that the coarse moduli space of
$M //_{StMap}\, S$ is the {\em Chow quotient} $M //_{Chow}\, S$, which
is a type of quotient that exists also for higher-dimensional group actions.
We, however, only define the stable map quotient for $\Gm$-actions
(though perhaps one could, as in \cite{Alexeev},
extend the definition using ideas from the Minimal Model Program).

\begin{Theorem}\label{thm:smooth}
  Let $X_\lambda^\mu \subseteq G/P$ be a Richardson variety, typically singular.
  Nonetheless, $X_\lambda^\mu //_{StMap}\, \rhocek$ is a smooth orbifold.
\end{Theorem}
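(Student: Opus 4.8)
The plan is to \emph{avoid} working with stable maps into the singular variety $X_\lambda^\mu$ directly -- the smoothness theorem of \cite{FP} does not apply there, $X_\lambda^\mu$ being neither smooth nor convex -- and instead to realize $X_\lambda^\mu //_{StMap}\, \rhocek$ as a connected component of $StMap_\beta(\PP^1, G/P)^{\rhocek}$, which \emph{is} smooth. The closed embedding $X_\lambda^\mu \into G/P$ induces a closed embedding $StMap_\beta(\PP^1, X_\lambda^\mu) \into StMap_\beta(\PP^1, G/P)$ (a stable map to $G/P$ factors through the closed subvariety $X_\lambda^\mu$ exactly when its set-theoretic image lies there), hence a closed embedding on $\rhocek$-fixed loci, under which $\varphi_{X_\lambda^\mu}(U)$ maps to $\varphi_{G/P}(U)$. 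Write $Z \subseteq StMap_\beta(\PP^1, G/P)^{\rhocek}$ for the connected component containing $\varphi_{G/P}(U)$.

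First I would show that \emph{every} $[\Sigma,\gamma] \in Z$ has image contained in $X_\lambda^\mu$. Since $\rhocek$ is regular, $(G/P)^{\rhocek} = (G/P)^T$ is finite, and since $Z$ contains the irreducible map $\varphi_{G/P}(u)$ for generic $u \in U$, every point of $Z$ is equivariantly smoothable; Corollary \ref{cor:chain} then says each such $\Sigma$ is a caterpillar, with back $b$ and front $f$. The $T$-fixed points $\gamma(b)$ and $\gamma(f)$ are locally constant on the connected $Z$, hence equal throughout $Z$ to the values $\mu$ and $\lambda$ they take on $\varphi_{G/P}(U)$ (namely the two limits $\lim_{z\to 0}\rhocek(z)\cdot u$ and $\lim_{z\to\infty}\rhocek(z)\cdot u$, the endpoints of the Richardson interval). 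The two Bia\l ynicki-Birula decompositions of $G/P$ attached to $\rhocek$ are the Schubert and opposite-Schubert stratifications -- in particular they are stratifications -- so Lemma \ref{lem:ends}(2) applies and gives $\gamma(\Sigma) \subseteq X_\lambda \cap X^\mu = X_\lambda^\mu$.

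It follows that $Z$, being connected and set-theoretically supported in $X_\lambda^\mu$, factors through the closed substack $StMap_\beta(\PP^1, X_\lambda^\mu)^{\rhocek}$; since $Z$ is reduced (it is a connected component of the fixed locus of a torus acting on the smooth stack $StMap_\beta(\PP^1, G/P)$) and contains $\varphi_{X_\lambda^\mu}(U)$, it coincides with the component $X_\lambda^\mu //_{StMap}\, \rhocek$ of the latter. Thus $X_\lambda^\mu //_{StMap}\, \rhocek$ is a connected component of $StMap_\beta(\PP^1, G/P)^{\rhocek}$. To conclude, one invokes: $StMap_\beta(\PP^1, G/P)$ is a smooth Deligne--Mumford stack by \cite{FP} ($G/P$ convex); using the global finite extension $\rhotilde \onto \rhocek$ of Proposition \ref{prop:extension}, which acts on an \'etale-local presentation of the stack, the $\rhotilde$-fixed substack -- whose coarse space is the $\rhocek$-fixed locus -- is smooth, since the fixed locus of a linearly reductive group on a smooth scheme is smooth and this glues. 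Hence $X_\lambda^\mu //_{StMap}\, \rhocek$ is a smooth orbifold. (If $X_\lambda^\mu$ is a point then $\beta = 0$ and the quotient is defined to be empty, trivially smooth.)

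I expect the main obstacle to be the stacky bookkeeping in the last step: passing cleanly between the torus action on the coarse moduli space (where ``$\rhocek$-fixed points'' literally live, per the footnotes in \S\ref{ssec:torusactions}) and a genuine action of the finite extension $\rhotilde$ on the stack, and then citing a clean statement that torus-fixed substacks of smooth DM stacks are smooth. By contrast the geometric core -- that the \emph{entire} component $Z$, not merely its dense part $\varphi(U)$, is forced into $X_\lambda^\mu$ -- is comparatively soft once Corollary \ref{cor:chain} and Lemma \ref{lem:ends} are in hand, the only real input being that the Schubert stratifications of $G/P$ are stratifications in the required sense.
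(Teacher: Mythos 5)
Your proposal is essentially the paper's own proof: embed $StMap_\beta(\PP^1,X_\lambda^\mu)$ into the smooth orbifold $StMap_{\iota_*\beta}(\PP^1,G/P)$, pass to the $\rhocek$-fixed locus (still smooth), and show the whole connected component $Z$ containing $\varphi(U)$ is forced set-theoretically into $X_\lambda\cap X^\mu$ via the caterpillar structure (Corollary \ref{cor:chain}) and Lemma \ref{lem:ends}(2), the Bia\l ynicki-Birula decompositions of $G/P$ for $\rhocek$ being the two Schubert stratifications. The one soft spot is your bare assertion that ``$\gamma(b)$ and $\gamma(f)$ are locally constant on the connected $Z$.'' This is not automatic: the dual graph of the caterpillar jumps across strata of $Z$, so there is no a priori continuous back/front evaluation map defined on all of $Z$, and one must rule out, say, the image of the back jumping to a different $T$-fixed point when a new component sprouts in a degeneration. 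The paper closes exactly this gap with Lemma \ref{lem:ends}(1): the equivariant class $\overline\beta=\gamma_*([\Sigma])\in A_1^{\rhocek}(G/P)$ \emph{is} locally constant on the fixed locus, and its point restrictions are nonzero precisely at $\gamma(b)$ and $\gamma(f)$, which pins these down uniformly over $Z$ from their values on $\varphi(U)$. Either cite that lemma (you only invoke part (2)) or supply an equivalent argument; with that repair the proof is complete and coincides with the paper's.
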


\begin{proof}
  Let $\iota\colon X_\lambda^\mu \into G/P$ be the injection,
  which induces an injection 
  $StMap_{\beta}(\PP^1, X_\lambda^\mu) \into StMap_{\iota_*(\beta)}(\PP^1, G/P)$
  and from there an injection on the $\rhocek$-fixed point sets.
  
  Since $StMap_{\iota_*(\beta)}(\PP^1, G/P)$ is a smooth orbifold, so too is
  $StMap_{\iota_*(\beta)}(\PP^1, G/P)^\rhocek$, the $\rhocek$-fixed point set.
%  (although its components may have different dimensions).
  Let $M$ denote the connected component (which, by smoothness, is
  also a geometric component) containing the image of
  $X_\lambda^\mu //_{StMap}\, \rhocek$.
  It will then suffice to prove that $M$ is, in fact, that image.

  Consider the function
  $StMap_{\iota_*(\beta)}(\PP^1, G/P)^\rhocek \to A_1^\rhocek(G/P)$
  to the {\em equivariant} Chow group, taking
  $[\Sigma,\gamma] \mapsto \gamma_*([\Sigma])$.
  It is locally constant,
  and we let $\overline\beta$ be its (constant) value on $M$,
  which we can determine by evaluating on elements of $\phi_M(u)$
  (as defined at the beginning of the section).
  Then use lemma \ref{lem:ends}(1) to determine $\gamma(b),\gamma(f)$
  for the back $b$ and front $f$ of {\em any} $[\Sigma,\gamma] \in M$,
  and lemma \ref{lem:ends}(2) to show that
  $\gamma(\Sigma) \subseteq X_\lambda \cap X^\mu$.
\end{proof}

\junk{
  It is easy to see that
  $M_\lambda^\mu//_{StMap}\, S = {\overline U}_\lambda^\mu//_{StMap}\, S$,
  and as such, one might reasonably wonder why we involve $M$
  in the notation. We did so because we didn't see any reason
  not involving $G/P$ itself for the (usually) singular Richardson variety
  $X_\lambda^\mu$ to have a smooth stack as stable map quotient by $\rhocek$.
} % end junk

This is not yet the space we'll use to resolve $X_\lambda^\mu$; in
particular, $X_\lambda^\mu//_{StMap} \,\rhocek$ has dimension
one smaller than $X_\lambda^\mu$ has. We ``fix'' this now,
using a standard trick.

\subsection{Marking a component using the graph space}
\label{ssec:marking}

Extend the circle action $S\actson M$ to one on $M\times \PP^1$
using the standard (weight $1$) action $\Gm\actson \PP^1$.
Lift $\lambda,\mu \in M^S$ to $(\lambda,0)$, $(\mu,\infty)$.
We briefly need to distinguish the class $\beta$
of a generic $S$-orbit closure on $M$ from the class $\beta'$
of a generic $S$-orbit closure on $M\times \PP^1$.
Identifying $A_1(M\times \PP^1) \iso A_1(M)\times \ZZ$,
it is easy to show that $\beta' \mapsto (\beta,1)$,
at which point we can abandon the notation $\beta'$.

We are now considering the space $StMap_{(\beta,1)}(\PP^1, M \times \PP^1)$,
standardly called a ``graph space'' as each element of it
contains (and is usually equal to) the graph of some map $\PP^1 \to M$.
This is because for any $[\Sigma,\gamma]$ in the graph space, the composite
$$ \Sigma \to M \times \PP^1 \onto \PP^1
$$
is degree $1$ by assumption; hence, exactly one component
$C$ of $\Sigma$ maps isomorphically to the target, while all other
components of $\Sigma$ collapse to points.
% Call this $C$ the \defn{marking component}.

Pull back a point $w \in \PP^1$ along this isomorphism $C \to \PP^1$, 
and then project instead to $M$.
Chaining these constructions together we get a map
$$
ev_w \colon StMap_{(\beta,1)}(\PP^1, M \times \PP^1) \to M.
$$
We called it ``$ev$'' as it is reminiscent of the evaluation maps
out of stable mapping spaces for curves with marked points.
We do {\em not} use marked points here but instead, effectively,
mark a whole component. (This is not quite fair, as marking $n$
points on $\Sigma$ involves a function $[n]\into \Sigma$, rather than our
function $\Sigma \onto \PP^1$.)

\begin{Theorem}\label{thm:ev}
  For $w\in \Gm$, the map $ev_w$ is algebraic. For $w=0,\infty$
  it is constructible.
\end{Theorem}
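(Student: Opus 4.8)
The plan is to unwind the definition of $ev_w$ as a composite of three maps and check that each is a morphism of schemes (resp.\ of the coarse moduli spaces). Recall the chain: from $[\Sigma,\gamma]$ in the graph space we (i) pick out the unique component $C$ mapping isomorphically to the $\PP^1$ factor, (ii) use that isomorphism $C\xrightarrow{\sim}\PP^1$ to transport the point $w$ to a point $p_w\in C\subseteq\Sigma$, and (iii) apply $\gamma$ and project to $M$. Step (iii) is just the universal evaluation morphism $\overline{M}_{0,1}$-style, which is algebraic once we have a section of the universal curve; so the real content is producing step (ii) as an algebraic section $s_w\colon StMap_{(\beta,1)}(\PP^1,M\times\PP^1)\to\mathcal C$ of the universal curve $\mathcal C$.

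First I would observe that the composite $\mathcal C\to M\times\PP^1\onto\PP^1$ is a degree-one map on every fiber, hence (by properness and connectedness of the fibers, all genus zero) it is a \emph{birational} morphism of the universal curve to the trivial family $StMap\times\PP^1$; in fact it is the contraction of the collapsed components, so it is an isomorphism over the locus $w\in\PP^1$ away from where a collapsed component is attached, and more to the point it has a canonical section in the other direction only after clarifying what ``the point over $w$'' means. The clean way: the fiber product $\mathcal C\times_{\PP^1}\{w\}$ (pullback along the structure map $\mathcal C\to\PP^1$ just described, and $\{w\}\hookrightarrow\PP^1$) is, for $w\in\Gm$, a family over the graph space whose fiber is a single reduced point — namely the point $p_w$ on the distinguished component — precisely because no component of $\Sigma$ other than $C$ can dominate $\PP^1$ and the collapsed components sit over finitely many points; choosing $w$ generic-but-fixed in $\Gm$ (and the statement is for fixed $w$) keeps $p_w$ off every node and every collapsed locus in a way that is uniform over the base? — no, that last uniformity can fail, which is the crux: over a special point of the base a collapsed tree \emph{can} be attached exactly at $p_w$. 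So instead I would not try to land in $C$ set-theoretically but argue scheme-theoretically: $\mathcal C\times_{\PP^1}\{w\}\to StMap$ is finite of degree one and, because every fiber of $\mathcal C\to StMap$ is a nodal genus-zero curve with the component $C$ \emph{reduced} and meeting the rest of $\Sigma$ only in nodes, the scheme $\mathcal C\times_{\PP^1}\{w\}$ is reduced with one point in each fiber — even when that point is a node of $\Sigma$ it is still a single point of the total space, and the map to $StMap$ is then a bijective morphism between (reduced, the source being a Cartier divisor in the smooth orbifold $\mathcal C$) schemes, hence an isomorphism by normality of the base component. Composing its inverse with $\mathcal C\to M\times\PP^1\onto M$ gives $ev_w$ as a morphism; this is the algebraicity claim for $w\in\Gm$.

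For $w=0$ or $w=\infty$, the above breaks precisely because the two marked points $(\lambda,0)$ and $(\mu,\infty)$ were used to pin down the component, so over a sublocus of the base the fiber of $\mathcal C\to\PP^1$ over $w$ acquires positive-dimensional pieces (a whole collapsed subtree sitting over $0$, resp.\ $\infty$), and $\mathcal C\times_{\PP^1}\{w\}$ is no longer finite over $StMap$. Here I would only claim constructibility: stratify the graph space by the combinatorial type of $\Sigma$ (which components are collapsed, the incidence pattern of the dual tree, and where the subtree over $0$ is attached) — there are finitely many such strata, each locally closed — and on each stratum the recipe ``walk along the distinguished component $C$ to its attaching node, then into the collapsed subtree, and take $\gamma$ of the unique point lying over $w$ in the appropriate sense'' again becomes a morphism by the same normality-of-the-stratum argument (after normalizing the stratum if necessary, which does not affect constructibility of the image). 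Assembling the finitely many morphisms defined on the strata shows $ev_0, ev_\infty$ are constructible.

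The main obstacle is step (ii): showing that the ``point over $w$'' construction really does glue to an algebraic section, rather than just a set-theoretic assignment, and in particular handling the boundary where $p_w$ collides with a node (for $w\in\Gm$) or where a collapsed subtree grows over $w$ (for $w=0,\infty$). The key tools are (a) $\mathcal C$ is itself a smooth orbifold (so $\mathcal C\times_{\PP^1}\{w\}$ is an effective Cartier divisor, hence has no embedded points) and (b) normality of the base component $M//_{StMap}\,S$ from Theorem~\ref{thm:smooth}, which together upgrade a bijective morphism to an isomorphism; everything else is bookkeeping of strata.
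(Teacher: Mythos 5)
Your reduction of the problem to producing an algebraic section of the universal curve is reasonable, and your treatment of $w=0,\infty$ (stratify by the dual graph, trivialize the distinguished subfamily over each stratum, conclude constructibility) is essentially the paper's own argument. But the central step of your $w\in\Gm$ case contains a genuine error. You claim that $\mathcal C\times_{\PP^1}\{w\}\to StMap_{(\beta,1)}(\PP^1,M\times\PP^1)$ is finite of degree one with a single reduced point in each fiber, ``even when that point is a node of $\Sigma$.'' That is false precisely in the case you had flagged as the crux: every component of $\Sigma$ other than the distinguished one has degree $0$ over the $\PP^1$ factor, hence maps to a \emph{single} point of $\PP^1$; so if a subtree is attached to $C$ at the point $p_w$ lying over $w$, that entire subtree lies over $w$, and the scheme-theoretic fiber of $\Sigma\to\PP^1$ over $w$ is positive-dimensional there. (Such boundary loci really occur in the full graph space for any fixed $w\in\Gm$: e.g.\ a vertical curve $\{m\}\times\PP^1$ glued to a horizontal curve sitting in $M\times\{w\}$.) Consequently $\mathcal C\times_{\PP^1}\{w\}\to StMap$ is not even quasi-finite over that locus, the morphism is not bijective, and the ``bijective onto a normal base, hence an isomorphism'' conclusion has nothing to apply to. The set-theoretic definition of $ev_w$ (take $\gamma_M(p_w)$ with $p_w\in C$) is fine, but your scheme-theoretic construction does not reproduce it at the bad locus.

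To repair this along your lines you would have to discard the vertical components, e.g.\ replace $\mathcal C\times_{\PP^1}\{w\}$ by the closure of the section over the open locus where nothing hangs over $w$, and then prove that this closure still maps bijectively (the limit of $p_w$ is the attachment node, whose image under $\gamma_M$ is the intended value), before invoking normality; that extra limit argument is exactly what is missing. The paper instead avoids the issue by a different mechanism: it applies generic smoothness to the composite $F\to M\times\PP^1\onto\PP^1$ from the universal family to get a good fiber over \emph{some} $w\in\Gm$, and then uses $\Gm$-equivariance of this projection to transport the conclusion to every $w\in\Gm$. One further remark: in the fixed-point component $\wt{X_\lambda^\mu}$ actually used later, all nodes map to $0$ or $\infty$ in $\PP^1$, so for $w\in\Gm$ your section never meets a node and your argument does go through there; but the theorem (and your claim) is about the whole graph space, where the gap is real.
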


\begin{proof}
  Let $F$ be the universal family over $StMap_{(\beta,1)}(\PP^1,M \times \PP^1)$,
  whose fibers are the curves $\Sigma$, and let $\Gamma$ be the universal
  map $F \to M \times \PP^1$. By generic smoothness in characteristic $0$
  (the algebraic Sard's theorem), for some point $w \in \Gm$
  the fiber of $F \to M\times \PP^1 \onto \PP^1$ is smooth.
  Then by $\Gm$-equivariance, this smoothness holds for every $w\in\Gm$.
    
  For the constructibility, we stratify $StMap_{(\beta,1)}(\PP^1,M \times \PP^1)$
  according to the dual graph of the curve $\Sigma$. Over each stratum,
  the universal family breaks into components, exactly one of which
  projects isomorphically to $\PP^1$, trivializing that subfamily.
\end{proof}

\subsection{Combining these, to resolve $X_\lambda^\mu$}
\label{ssec:combining}

Let $X_{(\lambda,0)}^{(\mu,\infty)} \subseteq (G \times SL_2)/(P \times B_{SL_2})$,
and define %our space
$$ \wt{X_\lambda^\mu} 
:= X_{(\lambda,0)}^{(\mu,\infty)}//_{StMap}\, \rhocek_\Delta $$
where $\rhocek_\Delta$ is the smallest regular dominant coweight
of $G \times SL_2$.
\junk{
  i.e. the closure of the image of the first map
  $$
  \begin{matrix}
    U \text{ (from \S\ref{ssec:simpler})}
    &\xrightarrow {\varphi_{G/P\times \PP^1}}
    & StMap_{(\beta,1)}(\PP^1, G/P \times \PP^1)^\rhocek
    &\xrightarrow {ev_1} & G/P \\
    % gP/P &\mapsto& \overline{\forall z \in \Gm,\
    % z \mapsto (\rhocek(z)\cdot gP/P,z)}
    % &\mapsto& gP/P.
  \end{matrix}
  $$

We recapitulate our prior conclusions. Since $U$ is irreducible,
$\varphi_{G/P\times \PP^1}$ has picked out one
irreducible component of $StMap_{(\beta,1)}(G/P\times \PP^1)^\rhocek$.
Since $StMap_{(\beta,1)}(G/P\times \PP^1)$
is a smooth orbifold, so too is the fixed-point set
$StMap_{(\beta,1)}(G/P\times \PP^1)^\rhocek$
and each connected component thereof, in particular $\wt{X_\lambda^\mu}$. 
Since the composite map is the inclusion $U \into G/P$, we know
$ev_1\colon \wt{X_\lambda^\mu} \to G/P$ is birational to its image.

The closure of the image of $U$ in $G/P$ is of course $X_\lambda^\mu$.
}% end junk

We recapitulate our prior conclusions. By theorem \ref{thm:smooth}
this space is a smooth orbifold. By theorem \ref{thm:ev} it has an
algebraic map $ev_1$ to $X_\lambda^\mu$. On the dense set where $\Sigma$
is irreducible, its image in $X_\lambda^\mu \times \PP^1$ is the graph
of a $\rhocek$-equivariant map $\PP^1 \to X_\lambda^\mu$, which is
determined by the image of $1\in \PP^1$; hence the map is birational.

In all, we have an orbifold resolution of singularities of $X_\lambda^\mu$,
with no choices involved -- other than $S = \rhocek$, but that is
a canonical choice. (It does mean though that the resolution
$\wt{X_\lambda^\mu} \onto X_\lambda^\mu$ is only equivariant for the
group $Z_G(\rhocek) = T$, rather than for $Aut(X_\lambda^\mu)$.)

\junk{
  Since $0,\infty$ are $\rhocek$-fixed on $\PP^1$, their evaluation maps
  $ev_0,ev_\infty\colon \wt{X^\mu_\lambda} \to G/P$ land inside the
  discrete set $(G/P)^\rhocek \iso W/W_P$, and hence are constant.
  Indeed, $ev_0 \equiv \lambda$ and $ev_\infty \equiv \mu$.
}

\junk{
  There is a very funny aspect of this construction -- while we start
  with stable maps into $G/P$, eventually we're only looking at
  stable maps into $X_\lambda^\mu$. In particular, the definition of the
  space could have begun with $StMap_\beta(\PP^1,X_\lambda^\mu)$ and
  culminated in the same space. However, our proof that $\wt{X_\lambda^\mu}$
  is {\em smooth} springs from the smoothness (and convexity) of $G/P$,
  and we did not see as easy a way of proving the smoothness directly.
}

\subsection{Maps between stable map quotients}
\label{ssec:maps}

Given $f\colon X\to Y$ proper and a curve class $\beta \in A_1(X)$,
we get a map of stacks $StMap_\beta(\PP^1,X) \to StMap_{f_*\beta}(\PP^1,Y)$
taking $[\Sigma,\gamma]$ to $[\Sigma', f \circ \gamma]$ where
$\Sigma'$ is the ``stabilization'' of $\Sigma$.
Specifically, any component $C$ of $\Sigma$ meeting fewer than three
other components, and contracted by $f\circ\gamma$ (though not by
$\gamma$, since $[\Sigma,\gamma]$ was stable) is collapsed to a point
in $\Sigma'$.

Let $f\colon X\onto Y$ be $S$-equivariant, and define curve classes
$\beta_X,\beta_Y$ on them as in \S\ref{ssec:beta}. If we take a general point
$y\in Y$, and a general point $x \in f^{-1}(y)$, we can show easily that
$f_*([\beta_X]) = \beta_Y$. Now define the function
$$ f //_{StMap}\, S\colon \ \ X //_{StMap}\, S \to Y //_{StMap}\, S $$
taking $[\Sigma,\gamma]$ to $[\Sigma',f \circ \gamma]$ where
$\Sigma' = \Sigma/\!\!\sim$ is the smaller caterpillar formed by
collapsing those components of $\Sigma$ collapsed by $f\circ \gamma$.
The only difference from the general (non-$S$-equivariant)
case is that we know that every component meets fewer than three
other components and must be contracted.

\subsection{Stratifying by number of nodes}
\label{ssec:stratNodes}

\junk{
  We investigate the simplest local picture, $V \times \PP V \onto V$
  where $V = \CC_1 \oplus \CC_{-1}$ as a representation of $S$.
  While the space of stable maps from a noncompact source curve is
  ill-behaved, our $S$-equivariant maps are easy to consider.
  The orbit through a general point looks like
  $$ S \cdot (a,b,[c,d]) = \{ (sa,s^{-1}b, [sc,s^{-1}d]) \}
  = \{ (p,q, [r,s]) \colon pq = ab, [qr, pb] = [bc,ad] \} $$
  so this moduli space $Map(\PP^1, V\times \PP V)^S$ compactifies to
  $\CC \times \PP^1$. Meanwhile,
}

We record a first description of the stratification of $M//_{StMap}\, S$
by number of nodes, with further analysis to come in \S\ref{sec:dualcomplex}.

\begin{Proposition}\label{prop:sections}
  Fix a group isomorphism $S \iso \CC^\times$, and
  let $S$ act on a projective variety $M$ with isolated fixed points.
  \junk{Let $\wt S$ be the finite extension
    of $S$ that acts on each $[\Sigma,\gamma] \in M_\lambda^\mu//_{StMap}$.}
  Let $(M_\lambda^\mu//_{StMap}\,S)_{k\text{ nodes}}$ be the locally closed
  substack consisting
  of curves with exactly $k$ nodes. Then each curve has exactly $k+2$ many
  $\wt S$-fixed points, which can be consistently numbered $0\ldots k+1$,
  and the ``$i$th fixed point'' defines an algebraic section of the
  universal family restricted to $(M_\lambda^\mu//_{StMap}\,S)_{k\text{ nodes}}$.
  
  In particular, there is a map $\nu_i \colon
  (M_\lambda^\mu//_{StMap}\,S)_{k\text{ nodes}} \to M$
  taking $[\Sigma,\gamma] \mapsto \gamma($the $i$th fixed point$)$,
  and its image lies in $M^S$. The maps $\nu_0,\nu_{k+2}$ are
  constant maps, whose values are the $S$-sink and $S$-source
  (in the sense of Bia\l ynicki-Birula decompositions) respectively.
\end{Proposition}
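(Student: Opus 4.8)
The plan is to leverage Corollary \ref{cor:chain}, which tells us that every $[\Sigma,\gamma]$ in $M_\lambda^\mu//_{StMap}\, S$ is a caterpillar: a chain of $\PP^1$'s on each of which $\wt S$ acts with isotropy weights $\pm\alpha$. A chain of $k{+}1$ components glued at $k$ nodes has exactly $k{+}2$ distinguished $\wt S$-fixed points --- the $k$ nodes plus the back and front ends --- and the linear order of the chain gives a canonical numbering $0,\dots,k{+}1$ (say, starting at the back $b=\nu_0$ and ending at the front $f=\nu_{k+1}$, using the fixed isomorphism $S\iso\CC^\times$ to orient). First I would make this numbering precise and check it is locally constant on $(M_\lambda^\mu//_{StMap}\,S)_{k\text{ nodes}}$: the dual graph of $\Sigma$ is constant on this stratum (that is how the stratification is defined), and an isomorphism of caterpillars respects the back/front orientation once $S\iso\CC^\times$ is fixed, so the bijection between $\{0,\dots,k{+}1\}$ and the distinguished fixed points is canonical and varies algebraically.

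Next I would construct the sections. Over the locally closed substack $(M_\lambda^\mu//_{StMap}\,S)_{k\text{ nodes}}$ the universal family $\mathcal F$ decomposes (étale-locally, or after the stratification of \cite[\S4]{FP} used in the proof of Theorem \ref{thm:ev}) into $k{+}1$ subfamilies of $\PP^1$'s meeting along $k$ sections given by the nodes; this is exactly the ``breaking into components'' argument used for $ev_w$ in Theorem \ref{thm:ev}. On each component subfamily, $\wt S$ acts fiberwise with weights $\pm\alpha$, so the two fixed loci are disjoint algebraic sections (the fiberwise $\wt S$-fixed subscheme of a $\PP^1$-bundle with those weights is a disjoint pair of sections). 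Gluing these along the node-sections in the order dictated by the chain produces, for each $i$, a single algebraic section $s_i$ of $\mathcal F$; composing with $\gamma$ gives $\nu_i=\gamma\circ s_i$. By $\wt S$-equivariance of $\gamma$ and the fact that $s_i$ lands in $\Sigma^{\wt S}$, the image $\nu_i\big((M_\lambda^\mu//_{StMap}\,S)_{k\text{ nodes}}\big)$ lies in $M^{\wt S}\subseteq M^S$.

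Finally, for the endpoint maps: $\nu_0$ and $\nu_{k+1}$ are the back and front evaluations $\gamma(b),\gamma(f)$. I would invoke Lemma \ref{lem:ends}(1), which pins down $\gamma(b),\gamma(f)$ via the equivariant class $\overline\beta=\gamma_*([\Sigma])$, which is locally constant and hence constant on the connected stable map quotient; so $\gamma(b)$ and $\gamma(f)$ are the same points for every curve in the stratum, proving $\nu_0,\nu_{k+1}$ are constant. To identify these constants with the $S$-sink and $S$-source, evaluate on the dense locus $\varphi_M(U)$: there $\Sigma=\PP^1$ and $\gamma(z)=S(z)\cdot u$, whose back end is $\lim_{z\to 0}S(z)\cdot u$ and front end $\lim_{z\to\infty}S(z)\cdot u$ --- i.e. the sink and source of the Bia\l ynicki-Birula decompositions for a general $u$, which are $M$'s unique attracting/repelling fixed points since $U$ is dense. (Alternatively, Lemma \ref{lem:ends}(2) places $\gamma(\Sigma)$ in the closure of the big B--B strata, forcing the ends to be those special fixed points.)

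The main obstacle I anticipate is the algebraicity of the sections $s_i$ across the whole stratum: one must be careful that the decomposition of the universal family into ordered components is itself algebraic (not merely pointwise), which is why I would route it through the FP stratification exactly as in the proof of Theorem \ref{thm:ev}, rather than trying to cut out the fixed sections by a global equation. Everything else --- the numbering, the weights $\pm\alpha$ on each component, the identification of the ends --- is then a direct consequence of Corollary \ref{cor:chain} and Lemma \ref{lem:ends}.
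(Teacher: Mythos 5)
Your proposal is correct and follows essentially the same route as the paper's proof: corollary \ref{cor:chain} gives the caterpillar structure, the fixed points are numbered from the back using the chosen $S\iso\CC^\times$, the universal family over the $k$-nodes stratum breaks into $k+1$ component subfamilies whose pairwise intersections (plus the two remaining fixed-point components) give the sections, and nodes being $S$-fixed forces the images into $M^S$. The only difference is that you also spell out, via Lemma \ref{lem:ends} and evaluation on the dense locus $\varphi_M(U)$, why the two end maps are constant with values the $S$-sink and $S$-source --- a point the paper's proof leaves implicit --- and that addition is sound.
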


\junk{
  Thanks to proposition \ref{prop:janda},
  each $\gamma\colon \Sigma \to M$ is already $S$-equivariant;
  we don't need to extend $S$ to get an action.
}

\begin{proof}
  Recall from corollary \ref{cor:chain} that $\Sigma$ is a caterpillar;
  since it has $k$ nodes, it has $k+1$ components
  and two other $S$-fixed points, the front and back. 
  Number the back $0$, and the other $S$-fixed point on its component $1$.
  If $1$ is a node, then it connects to another component;
  number the other $S$-fixed point on that component $2$, and so on.

  Over $(M_\lambda^\mu//_{StMap}\,S)_{k\text{ nodes}}$ the universal family
  has $k+1$ components, numbered by their distance from the back,
  and the intersection of the $(i-1)$st and $i$th component defines
  the point subfamily (i.e. section) $\nu_i$ for $i\neq 0,k+2$.
  The last two arise as the two remaining components of the $\rhocek$-fixed
  points on the universal family.

  Since any node $n \in \Sigma$ must be $S$-fixed,
  its image $\gamma(n)$ lies in $M^S$.
\end{proof}

Define the \defn{boundary}
$\partial(M//_{StMap}\,S) \subset M//_{StMap}\,S $
by the condition ``$\Sigma$ has a node''.
% By corollary \ref{cor:chain}, if $\Sigma$ has $k$ nodes,
% it is a chain of $k+1$ $\PP^1$s, with $k+2$ $S$-fixed points.
This is a standard divisor to consider in such moduli spaces;
on $\overline{\calM_{0,4}} \iso \PP^1$ it consists of $3$ points hence
is not anticanonical. We study $\partial(M//_{StMap}\,S)$'s
anticanonicality in \S\ref{sec:anticanonical}.

\section{Relation to the Chow quotient}\label{sec:Chow}

The construction in \S\ref{ssec:simpler} is extremely similar to that of
the {\em Chow quotient} of $M$ by a group action of $G$ (see e.g.
\cite{Kapranov}).
In that story, one likewise defines a cycle class $\beta$ of general
$G$-orbit closures, and considers $G$-fixed points, but now inside a
Chow variety $\overline{C_{M,\beta}}$ of cycles rather than in a stack
of stable maps.

We have a ``cycle map''
$$ StMap_\beta(\PP^1,M) \to \overline{C_{M,\beta}} $$
taking $[\Sigma,\gamma]$ to its image $\gamma(\Sigma)$ counted with
multiplicities coming from the degrees of the map.
For general curve classes $\beta$ this cycle map is far from one-to-one;
for example when $M = \PP^1$ and $\beta = 2[\PP^1]$, the stable map space
is $(\PP^1)^2/S_2$ (recording the ramification points) whereas the
Chow variety is a point (the entire $\PP^1$ target is painted with a ``$2$'').

However, when we restrict to the locus of equivariantly smoothable
stable curves, the map becomes bijective, as we show now.
Recall from corollary \ref{cor:chain} that for $[\Sigma,\gamma] \in M//_{StMap}\, S$,
each component $C \subseteq \Sigma$ maps $\rhotilde$-equivariantly and
finitely (doesn't collapse) to its image. Hence the ramification
points of $\gamma|_C$ are known (they are the two $\rhocek$-fixed points
on the image), and the ramification degree is known (as the cycle
records that), which is enough to determine the map up to isomorphism. QED.

In fact even the associated cycle is more information than we need:

\begin{Proposition}\label{prop:curvestab}
  Let $[\Sigma,\gamma] \in M //_{StMap}\, S$. Then $[\Sigma,\gamma]$
  is determined by its image $\gamma(\Sigma)$ alone; the multiplicities
  can be inferred. More specifically, the multiplicity of a component
  $C \subseteq \gamma(\Sigma)$ is the order of the generic $S$-stabilizer
  on $C$ divided by that on $M$. 
\end{Proposition}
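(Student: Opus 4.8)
The statement says that for $[\Sigma,\gamma] \in M//_{StMap}\, S$, the map $\gamma$ is recoverable from the underlying set $\gamma(\Sigma) \subseteq M$ — not the cycle, just the reduced image — together with the information already baked into $M$ and $S$. I would first recall, from corollary \ref{cor:chain}, that $\Sigma$ is a caterpillar: a chain $C_0, C_1, \dots, C_k$ of $\PP^1$s, each mapping $\rhotilde$-equivariantly and finitely to its image, with adjacent components glued at $S$-fixed points. So $\gamma(\Sigma) = \bigcup_i \gamma(C_i)$ is a connected chain of (possibly repeated) $S$-stable rational curves in $M$; I'd first argue that as a \emph{set} this still lets us recover the constituent curves $\gamma(C_i)$ and their linear order, because each is an $S$-orbit closure, hence an irreducible $S$-stable curve through two $S$-fixed points of $M$, and the chain structure (which shares a fixed point with which) is visible set-theoretically. (A subtlety: different $C_i$ could have the \emph{same} image in $M$, i.e. the chain in $M$ backtracks; I'll come back to this.)

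\textbf{Recovering each component map.} Fix one component $C \cong \PP^1$ with image $Z := \gamma(C)$, an irreducible $S$-stable rational curve with exactly two $S$-fixed points $p_0, p_1$. The restriction $\gamma|_C \colon C \to Z$ is a finite $\rhotilde$-equivariant map of degree $d$, ramified only over $p_0$ and $p_1$ (the $\rhocek$-fixed points of $Z$), totally ramified there. Any two such maps of the same degree $d$ differ by an automorphism of the source, so $\gamma|_C$ is determined by $Z$ and $d$ alone. Now $d$ equals the ratio $[S : S_C]$ of $S$ to the stabilizer of a general point of $C$; but since $\gamma|_C$ is equivariant for the (finite extension of the) $S$-action on $Z$, a general point $z \in Z$ is the image of a general $c \in C$, and $S_c \subseteq S_z$ with index equal to the ramification... more precisely, the generic $S$-stabilizer on $C$ is the generic $S$-stabilizer on $Z$ (as $\gamma|_C$ is injective on a dense open, being birational onto... no — it has degree $d$). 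Let me instead argue directly: the generic stabilizer $S_Z$ of points of $Z$ is read off from $Z \subseteq M$ (it is the subgroup of $S$ acting trivially, i.e. fixing a general point of $Z$), and the degree $d = \deg(\gamma|_C)$ is $|S_Z| / |S_M|$ where $S_M$ is the generic stabilizer of $M$ — this is exactly the multiplicity formula in the statement, and it follows because $\gamma$ is (a finite cover of) the orbit map, so $\overline{S\cdot c}$ maps to $\overline{S\cdot z} = Z$ with degree $|S_z|/|S_c| = |S_Z|/|S_M|$ (using $S_c = S_M$ for $c$ generic in $\Sigma$, since $\Sigma$ is $\rhotilde$-equivariantly smoothable to an irreducible orbit closure in $M$ on which a general point has stabilizer $S_M$).

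\textbf{Assembling the global map; the main obstacle.} Having recovered each $\gamma|_{C_i}$ up to source automorphism, I reassemble $\gamma$ by matching them along the gluing points: $C_{i-1}$ and $C_i$ are glued at a point lying over the common $S$-fixed point of $\gamma(C_{i-1})$ and $\gamma(C_i)$, and the residual freedom (automorphisms of each $C_i$ fixing its two $S$-fixed points, i.e. $\Gm$) is eliminated exactly as in the proof of corollary \ref{cor:chain}: proposition \ref{prop:dual} forces the tangent weights at each node to be negatives of one another, rigidifying the gluing. The step I expect to be genuinely delicate is the one I flagged: \emph{ruling out, or correctly handling, a ``backtracking'' caterpillar} where $\gamma(C_{i-1}) = \gamma(C_i)$ as subsets of $M$ — then the reduced image $\gamma(\Sigma)$ loses the data of how many times that curve is traversed. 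Here I would invoke lemma \ref{lem:ends}(2) (or the Bia\l ynicki-Birula stratification structure it uses): along a caterpillar the images march monotonically through the B--B stratification from the $S$-sink $\gamma(b)$ toward the $S$-source $\gamma(f)$, so a component and its successor cannot have the same image — traversal is monotone, hence each $S$-stable curve in $M$ appears at most once in the chain, and the reduced image genuinely determines the (ordered, unrepeated) list of component-images. With that monotonicity in hand the reconstruction is complete, and the multiplicity of $C \subseteq \gamma(\Sigma)$ as a component of the cycle $\gamma_*[\Sigma]$ is precisely the degree $d = |S_Z|/|S_M|$ computed above. \QED
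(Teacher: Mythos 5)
Your argument is correct in substance and runs on the same engine as the paper's proof: everything reduces to the rigidity of $\Gm$-equivariant maps of rational curves with exactly two fixed points, with the degree of each component map read off as a ratio of generic stabilizer orders. The packaging differs: the paper proves one abstract statement (between two varieties with dense orbits there is at most one equivariant surjection, and it exists when the source is a smooth curve, the target proper, and the generic stabilizers are nested) and applies it to the fixed model $D$, the normalization of a generic $S$-orbit closure, which then maps uniquely and equivariantly onto each component $C$ of $\gamma(\Sigma)$ with degree the stabilizer ratio; you instead put each $\gamma|_{C_i}$ into the normal form $z\mapsto z^d$ directly. The one step where your write-up is genuinely looser than it should be is the identification $d=|S_Z|/|S_M|$: the phrase ``$S_c=S_M$ for $c$ generic in $\Sigma$'' is not literally meaningful ($S$ does not act on $\Sigma$, only the finite extension $\rhotilde$ does), and equivariant smoothability by itself does not prevent the generic stabilizer on a degenerate source component from differing from that of the nearby irreducible sources -- pinning down the weight $\pm\alpha$ on the caterpillar components (equivalently the degrees $d_i$) is precisely the content here. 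The paper's comparison with $D$ is the device that does this work (since the generic stabilizer $S_M$ fixes a dense subset it acts trivially on $M$, so $S_M\leq S_C$ for every invariant curve and the unique equivariant surjection $D\onto C$ exists, of degree $|S_C|/|S_M|$); alternatively one can fix $\alpha$ using the local constancy of the equivariant class $\gamma_*[\Sigma]$ as in the proof of lemma \ref{lem:ends}(1). So either import the paper's lemma or supply such an argument at that point.

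On the other side of the ledger, your explicit treatment of ``backtracking'' is a genuine plus: adjacent components cannot share an image because equivariance would force the tangent weight of $S$ at the common image point to have both signs (proposition \ref{prop:dual}), and a monotone quantity along the chain excludes repeats altogether -- a point the paper's proof leaves implicit. Do state the monotonicity carefully, though: lemma \ref{lem:ends}(2) carries a stratification hypothesis, and the cleanest general argument is that the weight of an ample $S$-linearization strictly increases along each image component in the flow direction. Finally, the gluing needs no extra rigidification from proposition \ref{prop:dual}: once each component map is unique up to source automorphism and the attaching points are the unique preimages of the shared fixed points, the glued stable map is determined up to isomorphism.
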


\begin{proof}
  First observe that if $X,Y$ are two $G$-varieties with dense $G$-orbits,
  then up to isomorphism there is at most one $G$-equivariant map
  $\gamma\colon X\onto Y$. Also, if $X$ is a smooth curve, $Y$ proper,
  and we have containment (up to conjugacy) of the generic $G$-stabilizer
  on $X$ in the generic $G$-stabilizer on $Y$, then the map will exist.
  Proof: fix $x\in X$ in the dense orbit. If
  $\gamma(x)$ does not lie in $Y$'s dense $G$-orbit $U$, then it lies in
  the closed complement $Y\setminus U$, in which case so do
  $\gamma(G\cdot x)$ and $\gamma\left(\overline{G\cdot x} = X\right)$
  by equivariance and density. This violates $\gamma$ being onto.
  The map $x\mapsto y$ extends to $G\cdot x \to G\cdot y$ only if
  $Stab_G(x) \leq Stab_G(y)$, in which case it extends uniquely,
  and then extends in at most one way to the rest of $X$.
  (If $X$ is a smooth curve and $Y$ is proper, then the extension exists.)
  The only choice involved was of $y$ in $Y$'s dense $G$-orbit,
  which conjugates $Stab_G(y)$,
  and different choices give isomorphic objects in the category of
  $G$-spaces over $Y$. QED.
  
  Let $D \iso \PP^1$ be the normalization of a generic $S$-orbit
  closure in $M$, considered as an $S$-variety.
  Since the $G$-stabilizers on $M$ vary upper semicontinuously, the
  previous paragraph says that $D$ has a unique (up to isomorphism)
  map onto each component $C$ of $\gamma(\Sigma)$. On the open orbits,
  the map looks like $\Gm/(\ZZ/m) \onto \Gm/(\ZZ/mn)$, whose degree
  is plainly the ratio of the stabilizer orders.
  \junk{Let $k+1$ be the number of components of $\gamma(\Sigma)$, and glue
    together that many copies of $D$ into a chain $\Lambda$.
    Then we have shown that $\Sigma \iso \Lambda$ as $S$-spaces over
    $\gamma(\Sigma)$.  }
\end{proof}

Perhaps the main reason to work with the space of stable maps rather than
the Chow variety is that the former naturally includes the stack structure,
which in the $X_\lambda^\mu$ case makes the space a ``smooth stack'',
unlike its coarse moduli space the Chow quotient (which is therefore
rationally smooth, hence satisfies rational Poincar\'e duality).
In particular, the tangent spaces of this smooth stack all have
the same dimension, and we study these tangent spaces in the next section.

Although the coarse moduli space of $X_\lambda^\mu//_{StMap}\, S$ is normal
and proper, and maps bijectively to $X_\lambda^\mu//_{Chow}\, S$,
it does not follow that the cycle map is an isomorphism; potentially
the target space has some cuspidal singularities or some nonreduced structure.
I don't know if these issues actually occur. Michalek and Wang have
informed me that in their study of the Chow quotient, they pass to its
normalization to avoid such niceties.

There is a similar bijectivity result in \cite{ChenSatriano}, though
specific to toric varieties and involving log structures.

\section{The stratification on
  $X_\lambda^\mu //_{StMap}\, \rhocek$,
  and its dual simplicial complex}
\label{sec:dualcomplex}

\subsection{Paths in $G/P$}\label{ssec:pathsinGmodP}

Recall from \S\ref{ssec:stratNodes} the locally constant maps
$\nu_i\colon (M_\lambda^\mu//_{StMap}\,S)_{k\text{ nodes}} \to M^S$.
% Using the values of $(\nu_i)$ we can partition
% $(M_\lambda^\mu//_{StMap}\,S)_{k\text{ nodes}}$.
For $\kappa = (\kappa_1,\ldots,\kappa_m)$ a sequence in $M^S$, define
$$
(M//_{StMap}\, S)^\circ_{\kappa} := \left\{ [\Sigma,\gamma] \in M//_{StMap}\, S
  \colon \text{ $\Sigma$ has $m$ nodes, and $\nu_i(\Sigma) = \kappa_i$
    for $i=1,\ldots,m$} \right\}
$$
and let $(M//_{StMap}\, S)_{\kappa}$ denote its closure.
Most sequences give an empty set; see the next theorem.

%\subsection{The $\wt{X_\lambda^\mu}_-$ case}\label{ssec:minuscase}

Take $\lambda \neq \mu$; our Richardson variety is therefore not a point,
and $\beta$ is not $0$ (the better to apply corollary \ref{cor:chain}).

\begin{Theorem}\label{thm:strata}
  Let $\kappa = (\kappa_1,\ldots,\kappa_m)$ be a sequence of length $m$
  in $W/W_P$, and pad its ends with $\kappa_0 := \lambda$, $\kappa_{m+1} := \mu$.

  As we have only analyzed the tangent spaces to
  $X_\lambda^\mu//_{StMap}\, \rhocek$ in the cases of $G/P$
  a Grassmannian or full flag variety, assume $G/P$ is one of those.
  \begin{enumerate}
  \item 
    $(X_\lambda^\mu//_{StMap}\, \rhocek)_{\kappa} \iso
    (X_{\kappa_0}^{\kappa_1}//_{StMap}\, S) \times
    (X_{\kappa_1}^{\kappa_2}//_{StMap}\, S) \times
    \cdots \times
    (X_{\kappa_m}^{\kappa_{m+1}}//_{StMap}\, S)
    $
%    In particular it is nonempty exactly when $\kappa$ is a strictly
%    increasing chain in the open interval $(\lambda,\mu) \subseteq M^S$.
  \item If $\kappa$ is not strictly increasing in $W/W_P$ strong Bruhat order, 
    then $(X_\lambda^\mu//_{StMap}\, \rhocek)_\kappa = \emptyset$.
%    \break    So hereafter assume it is strictly increasing.
  \item 
    $\dim (X_\lambda^\mu//_{StMap}\, \rhocek)_{\kappa} 
    = \dim(X_\lambda^\mu) - (k+1)$.
    That is, its codimension in $X_\lambda^\mu//_{StMap}\, \rhocek$ is $k$.
  \item The divisors $\bigcap_{i=1}^m (X_\lambda^\mu//_{StMap}\, \rhocek)_{(\kappa_i)}$
    transversely intersect, giving $(X_\lambda^\mu//_{StMap}\, \rhocek)_{\kappa}$.
  \item
    $\partial (X_\lambda^\mu//_{StMap}\, \rhocek)$
    is a simple normal crossings divisor.
  \end{enumerate}
\end{Theorem}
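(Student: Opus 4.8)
Write $Y:=X_\lambda^\mu//_{StMap}\,\rhocek$, and for $\kappa=(\kappa_1,\dots,\kappa_m)$ (padded $\kappa_0:=\lambda$, $\kappa_{m+1}:=\mu$, and $k:=m$) let $Y_\kappa\supseteq Y_\kappa^\circ$ denote the closed and open strata. The plan is to get (2) directly from Lemma~\ref{lem:ends}, realize (1) by an explicit gluing morphism, and then read off (3), (4), (5) from an \'etale local model of $Y$ along $Y_\kappa^\circ$. For (2): a point of $Y_\kappa^\circ$ is, by Corollary~\ref{cor:chain}, a caterpillar $\Sigma=C_1\cup\dots\cup C_{m+1}$ with nodes $p_i=C_i\cap C_{i+1}$, no collapsed component, $\gamma(p_i)=\kappa_i$, and (Proposition~\ref{prop:sections}) $\gamma$ sending the back and front to $\lambda$ and $\mu$. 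Since $\rhocek$ is regular, its two Bia\l ynicki--Birula decompositions of $G/P$ are the Schubert and opposite-Schubert decompositions, which are genuine stratifications, so Lemma~\ref{lem:ends}(2) applies to each single-component caterpillar $C_i$ and gives $\gamma(C_i)\subseteq X_{\kappa_{i-1}}\cap X^{\kappa_i}=X_{\kappa_{i-1}}^{\kappa_i}$. This Richardson subvariety is nonempty (it contains the non-point $\gamma(C_i)$), forcing $\kappa_{i-1}<\kappa_i$; hence $\lambda=\kappa_0<\kappa_1<\dots<\kappa_m<\kappa_{m+1}=\mu$, and $Y_\kappa=\emptyset$ unless $\kappa$ is strictly increasing.

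For (1): conversely, cutting a caterpillar representing a point of $Y_\kappa^\circ$ at its $m$ marked nodes $\nu_1,\dots,\nu_m$ produces $m+1$ caterpillars, the $i$th representing a point of $X_{\kappa_{i-1}}^{\kappa_i}//_{StMap}\,S$ by the previous step; and gluing $\rhotilde$-caterpillars successively along $\rhotilde$-fixed points with matching $\gamma$-images defines a morphism of stacks
\[ \Phi\colon \prod_{i=1}^{m+1}\bigl(X_{\kappa_{i-1}}^{\kappa_i}//_{StMap}\,S\bigr)\ \longrightarrow\ Y . \]
Here one uses Proposition~\ref{prop:extension} to select a single extension $\rhotilde$ serving every space involved, and the identity $\beta=\sum_i\beta_{X_{\kappa_{i-1}}^{\kappa_i}}$ (got by summing the degrees of the components of a generic caterpillar in the stratum) so that $\Phi$ lands in the component $Y$. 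The map $\Phi$ is injective, since a caterpillar with its ordered marked fixed points is reconstructed from its pieces, and it matches automorphism groups (an automorphism commuting with $\gamma$ preserves each component, because it preserves the dual path and cannot exchange the ends, which map to $\lambda\neq\mu$), so it respects the stack structure; its image is $Y_\kappa$. That $\Phi$ is an \emph{isomorphism} onto $Y_\kappa$ I would deduce from the tangent-space computation below, which shows $\Phi$ is \'etale along $Y_\kappa^\circ$, together with normality of the coarse target.

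For (3)--(5): fix $[\Sigma_0,\gamma_0]\in Y_\kappa^\circ$ with exactly the $m$ nodes $p_1,\dots,p_m$ and no collapsed components. Using the tangent-space analysis of $Y$ in the Grassmannian and full-flag cases (to which the theorem is confined) together with the node--normalization exact sequence for deformations of stable maps into the convex variety $G/P$, the tangent space at $[\Sigma_0,\gamma_0]$ splits as
\[ \bigoplus_{i=1}^{m}\bigl(T_{p_i}C_i\otimes T_{p_i}C_{i+1}\bigr)\ \oplus\ \bigoplus_{i=1}^{m+1}T_{[C_i,\gamma_0|_{C_i}]}\bigl(X_{\kappa_{i-1}}^{\kappa_i}//_{StMap}\,S\bigr), \]
the first $m$ lines being the node-smoothing directions. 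Each of those is $\rhocek$-fixed: by Proposition~\ref{prop:dual} the $\rhotilde$-weights on $T_{p_i}C_i$ and $T_{p_i}C_{i+1}$ are opposite, so their tensor has weight $0$. Hence these $m$ directions genuinely occur in $Y$, and $\Phi$ together with them produces an orbifold-\'etale local model $\AA^m\times\prod_{i=1}^{m+1}\bigl(X_{\kappa_{i-1}}^{\kappa_i}//_{StMap}\,S\bigr)$ with coordinates $t_1,\dots,t_m$ the smoothing parameters. In this chart $Y_{(\kappa_i)}=\{t_i=0\}$ --- membership is equivalent to the node $p_i$ persisting, since every point of $X_\lambda^{\kappa_i}//_{StMap}\,S\times X_{\kappa_i}^\mu//_{StMap}\,S$ carries a node with image $\kappa_i$, which near $[\Sigma_0,\gamma_0]$ can only be $p_i$ as the $\kappa_j$ are distinct --- so $\bigcap_{i=1}^m Y_{(\kappa_i)}=\{t_1=\dots=t_m=0\}$ is a transverse intersection of codimension $m$ equal to $Y_\kappa$, which is (4); and with $\dim\bigl(X_{\kappa_{i-1}}^{\kappa_i}//_{StMap}\,S\bigr)=\ell(\kappa_i)-\ell(\kappa_{i-1})-1$ (Bruhat length $\ell$) a telescoping sum gives the codimension $k=m$ asserted in (3). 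Finally $\partial Y=\{t_1\cdots t_m=0\}$ in this chart --- no further boundary contribution appears, because $[\Sigma_0,\gamma_0]$ has precisely these $m$ nodes --- exhibiting $\partial Y$ as an orbifold simple normal crossings divisor, which is (5).

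The main obstacle is the tangent-space splitting in the last step and the resulting \'etale local model: one must know that $[\Sigma_0,\gamma_0]$ has no $\rhocek$-fixed first-order deformation beyond the $m$ node-smoothings and the deformations of the individual pieces, and it is exactly at this point that the argument presently needs $G/P$ to be a Grassmannian or a full flag variety (the reason the theorem carries that hypothesis). Granting that, the Bruhat monotonicity of (2), the bookkeeping for $\Phi$ in (1), and the passage from the local model to transversality, the dimension count, and the snc property are all routine.
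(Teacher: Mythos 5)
Your overall architecture parallels the paper's: cut a caterpillar at its marked nodes to map into the product, glue to come back, and use a tangent-space analysis to obtain transversality, the codimension count, and the snc property; your direct derivation of (2) from Lemma \ref{lem:ends}(2) (each component maps into $X_{\kappa_{i-1}}^{\kappa_i}$, which is a non-point, so $\kappa_{i-1}<\kappa_i$) is a perfectly good, mildly different route from the paper's, which reads (2) off the empty factor in the product of (1). The genuine gap is exactly at the point the paper singles out as the crux of (1): why the sutured caterpillar lies in the distinguished connected component $Y=X_\lambda^\mu//_{StMap}\,\rhocek$, i.e.\ why it is equivariantly smoothable. Your justification --- that the curve classes of the pieces sum to $\beta$, ``so that $\Phi$ lands in the component $Y$'' --- does not suffice: every point of $StMap_\beta(\PP^1,X_\lambda^\mu)^{\rhocek}$ has class $\beta$, and this fixed locus may a priori have several connected components, of which the stable map quotient is by definition only one (the closure of the graphs of generic orbits). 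Your later observation that each node-smoothing line $T_{p_i}C_i\otimes T_{p_i}C_{i+1}$ has $\rhotilde$-weight $0$, ``hence these directions genuinely occur in $Y$,'' both presupposes that the glued point already lies in $Y$ and is only a first-order statement: a weight-zero tangent vector need not integrate to an actual equivariant deformation. The paper closes this by a converse to Proposition \ref{prop:dual}: convexity of $G/P$ makes the ambient stable map space unobstructed, so each node-smoothing direction is realized by an honest deformation, and rerunning the proof of Theorem \ref{thm:smooth} (via Lemma \ref{lem:ends}) shows the smoothed curves remain inside $X_\lambda^\mu$; hence the glued caterpillar is a limit of irreducible orbit closures and sits in the right component. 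Some such unobstructedness argument must be supplied, and without it your (1), and therefore the local model feeding (3)--(5), is not established.

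A secondary remark: your orbifold-\'etale local model $\AA^m\times\prod_i\bigl(X_{\kappa_{i-1}}^{\kappa_i}//_{StMap}\,S\bigr)$ is stronger than what the paper proves or uses; the paper gets reducedness in (4) from a tangent-space dimension count at the $T$-fixed points (each divisor removes one distinct node summand of type (ii) from \S\ref{ssec:analyzing}), and this is where the Grassmannian/full-flag hypothesis enters. You correctly identify the splitting of the $\rhocek$-fixed tangent space as the key remaining input, but since you grant it rather than prove it, the part of your write-up that goes beyond the paper's own argument is precisely the part left unproven; the safer course is to retreat to the paper's weaker dimension-count formulation, which suffices for (3), (4) and (5).
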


Part (4) involves a tangent space calculation we will need before the proof.

\subsection{First analysis of the tangent spaces}
\label{ssec:analyzing}

For any smooth $M$, there are three terms contributing to the tangent space
$T_{[\Sigma,\gamma]} StMap_\beta(\PP^1, M)$
(see e.g. \cite[\S 2]{Kwon}):
\begin{enumerate}
\item[(i)] $H^1\left(\wt\Sigma, T\wt\Sigma[-R]\right)$ where $\wt\Sigma$ is the
  normalization (a disjoint union of $\PP^1$s),
  and $R \subseteq \wt\Sigma$ is the now-pulled-apart nodes.
  This is about moving around, on $\Sigma$,
  the points of intersection of the components.
  This term vanishes if every component of $\Sigma$ has at most $3$ nodes,
  as is of course true for caterpillars.
\item[(ii)]
  $\Oplus_{n\in {\rm nodes}(\Sigma)} T_{n_1}(\wt \Sigma)\tensor T_{n_2}(\wt \Sigma)$
  where $n_1,n_2$ are the points lying over the node. This is about
  deforming the node away. (Note the relation to proposition \ref{prop:dual};
  for the node to be $S$-equivariantly deformable, this $1$-dimensional
  $S$-representation need be trivial.)
\item[(iii)] $\coker\left( H^0(\Sigma;\ T\Sigma)
    \to H^0(\Sigma;\ \gamma^* TM) \right)$.    
  This is about moving $\gamma(\Sigma)$ around in $M$.
  Note, for later calculation, the beginning of the Mayer-Vietoris sequence
  $$ 0 \to H^0(\Sigma;\ \gamma^* TM) 
  \to H^0\!\left(\wt\Sigma;\ \gamma^* TM\right)
  \to H^0(\text{nodes}(\Sigma);\ \gamma^* TM)   % \to H^1(\Sigma;\ \gamma^* TM)
  $$
\end{enumerate}

\begin{proof}[Proof of theorem \ref{thm:strata}]
  \begin{enumerate}
  \item Consider the maps $(X_\lambda^\mu//_{StMap}\, \rhocek)_{\kappa}
    \to X_{\kappa_i}^{\kappa_{i+1}}//_{StMap}\, S$ taking a caterpillar to
    its $i$th component, $i=0\ldots m$. Their product gives an obviously
    injective map to the RHS.

    The map backwards {\em seems} equally simple -- suture the
    caterpillars together -- but that only maps obviously to
    $StMap_\beta(\PP^1,X_\lambda^\mu)^\rhocek$. There is a subtlety
    remaining: is the image in the correct connected component,
    i.e. are the sutured caterpillars equivariantly smoothable?
    We'll need a converse to proposition \ref{prop:dual}.

    This is the question of whether each of the
    $T_{n_1} \wt\Sigma \tensor T_{n_2} \wt\Sigma$ summands in the
    tangent space %(see the beginning of \S\ref{ssec:analyzing})
    defines an
    actual deformation of $[\Sigma,\gamma]$. On $G/P$ we know it does,
    because $G/P$ is convex; we then follow the proof of
    theorem \ref{thm:smooth} to establish the same for $X_\lambda^\mu$.
  \item If $\kappa_{i+1} \not > \kappa_i$,
    then $X_{\kappa_i}^{\kappa_{i+1}} = \emptyset$, contributing a $\emptyset$
    factor to the product. (Note that this involves the case
    $\kappa_i = \kappa_{i+1}$, because we declared we wanted
    $pt//_{StMap}\, \rhocek$ defined as empty.)
  \item Add up the dimensions from (1).
  \item This is clear set-theoretically -- for a curve to pass through
    all the points $\kappa$, it needs to pass through each one.
    To be sure that the intersection is reduced, it is enough to
    check the dimensions of the tangent spaces at the $T$-fixed points.
    Each divisor loses one summand of type (ii),
    and the summands lost are distinct.
  \item The components of $\partial (X_\lambda^\mu//_{StMap}\, \rhocek)$ are
    the divisors $(X_\lambda^\mu//_{StMap}\, \rhocek)_{(\kappa_1)}$,
    $\kappa_1 \in (\lambda,\mu)$.
    The sncd requirement is that any nonempty intersection of these
    should be smooth (in particular reduced) and irreducible.
    The reducedness follows from (4),
    at which point the smoothness and irreducibility follow from (1).
  \qedhere
  \end{enumerate}
\end{proof}

Given an sncd $\partial N$ in a manifold $N$ (or, as here, in an orbifold),
the \defn{dual simplicial complex} \cite{KollarXu,Payne} is defined to have
vertex set $V$ the set of components of $\partial N$,
with $F \subseteq V$ a face iff the intersection of the corresponding
components is nonempty. In general these complexes can look like anything
(\cite{KollarXu} again), but are conjectured to be spheres (or closely related)
when $\partial N$ is anticanonical. The \defn{order complex} of a
poset $P$ has vertices $P$ and $F\subseteq P$ is a face iff $F$ forms
a chain in $P$. By (1) and (3) above, the dual simplicial complex
to $\partial \wt{X_u^v}$ is the order complex of the open interval
$(\lambda,\mu) := \{\nu \in W/W_P\colon \lambda<\nu<\mu\}$
in the Bruhat order on $W/W_P$. 

\begin{Theorem}\label{thm:BjornerWachs}
  In the $G/B$ case, the dual simplicial complex to
  $\partial X_u^v //_{StMap}\, \rhocek$ is homeomorphic to a sphere. In the $G/P$
  case, it is sometimes a sphere and sometimes a ball.
\end{Theorem}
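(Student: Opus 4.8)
The plan is to reduce Theorem~\ref{thm:BjornerWachs} directly to the combinatorial results of \cite{BjornerWachs}. By the discussion immediately preceding the statement—combining parts (1) and (3) of Theorem~\ref{thm:strata}—the dual simplicial complex of the sncd $\partial\,\bigl(X_\lambda^\mu//_{StMap}\,\rhocek\bigr)$ is precisely the order complex $\Delta\bigl((\lambda,\mu)\bigr)$ of the open Bruhat interval $(\lambda,\mu)\subseteq W/W_P$, where a face is a chain. So the entire content of the theorem is: \emph{what is the homeomorphism type of $\Delta\bigl((\lambda,\mu)\bigr)$?} This is exactly the object studied by Bj\"orner and Wachs. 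First I would recall their result: for $P=B$, the open interval $(u,v)\subseteq W$ in strong Bruhat order is \emph{thin} and shellable (it is in fact an Eulerian, in fact CL-shellable, poset), whence its order complex is a sphere of dimension $\ell(v)-\ell(u)-2$. This settles the $G/B$ case.

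For general $P$, the relevant poset is the quotient $W/W_P$ with the induced Bruhat order, and the intervals $(\lambda,\mu)\subseteq W/W_P$ need \emph{not} be thin: \cite{BjornerWachs} show these intervals are always shellable (they remain lexicographically shellable, being intervals in a quotient of a CL-shellable poset), so $\Delta\bigl((\lambda,\mu)\bigr)$ is always a shellable ball-or-sphere of the expected dimension, and it is a \emph{sphere} exactly when the interval is thin (every rank-two subinterval has exactly two elements in the middle rank) and a \emph{ball} otherwise. I would cite the precise statement from \cite{BjornerWachs} and note that both phenomena genuinely occur—e.g. in small Grassmannian cases one finds intervals whose order complex is a ball. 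The second sentence of the theorem (``sometimes a sphere and sometimes a ball'') then follows by exhibiting (or citing) one interval of each type.

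In writing this up I would be careful about two small points. The first is the padding convention: in Theorem~\ref{thm:strata} the sequence $\kappa$ is padded with $\kappa_0=\lambda$, $\kappa_{m+1}=\mu$, and the vertices of the dual complex are the divisors indexed by single elements $\kappa_1\in(\lambda,\mu)$ strictly between $\lambda$ and $\mu$; so the identification with the \emph{open} interval $(\lambda,\mu)$—not the closed one—is the correct one, and the dimension bookkeeping ($\dim\Delta = (\text{number of elements in a maximal chain of }(\lambda,\mu)) - 1 = \ell(\mu)-\ell(\lambda)-2$ in the $G/B$ case) matches part (3). The second is that nonemptiness of the faces is exactly the chain condition, which is how \cite{BjornerWachs} phrase their shelling, so no further translation is needed.

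The main obstacle is essentially bookkeeping rather than mathematics: the substance is entirely in \cite{BjornerWachs}, and the only work here is to verify that ``dual simplicial complex of $\partial\,(X_\lambda^\mu//_{StMap}\,\rhocek)$'' has been correctly identified with ``order complex of the open interval $(\lambda,\mu)$ in $W/W_P$'' and that the sphere-versus-ball dichotomy in \cite{BjornerWachs} is stated for precisely this quotient poset (and not only for $W$ itself). I would double-check the latter against the literature, since Bj\"orner–Wachs treat both $W$ and, in a follow-up or in the same paper's generality, the parabolic quotients; if the parabolic statement is only implicit, I would add a one-line remark deducing it from the $W$ case by the standard fact that $W/W_P$ is a graded poset in which intervals are isomorphic to intervals of $W$ cut down by a ``minimal coset representative'' condition, preserving shellability.
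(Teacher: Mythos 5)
Your proposal is correct and follows essentially the same route as the paper: identify the dual complex with the order complex of the open Bruhat interval $(\lambda,\mu)\subseteq W/W_P$ via the discussion after Theorem~\ref{thm:strata}(1),(3), and then invoke \cite{BjornerWachs} for the sphere (resp.\ sphere-or-ball) statement. The extra detail you supply about shellability and thinness is fine but not needed, since the paper treats the homeomorphism type as entirely established in \cite{BjornerWachs} (which does cover the parabolic quotient case).
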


\begin{proof}
  As we have seen, these are statements about the order complex of the
  open interval $(\lambda,\mu)$ in the Bruhat order on $W/W_P$,
  which were established in \cite{BjornerWachs}.
\end{proof}

\junk{
  \subsection{The $\wt{X_\lambda^\mu}$ case}
  The one difference in this case is that while no component
  of $\Sigma$ can be collapsed to a point in $G/P \times \PP^1$
  (corollary \ref{cor:chain}), if we map further to $G/P$ then
  {\em one} may collapse. Recall from \S\ref{ssec:marking} that
  all components $C' \subseteq \Sigma$ except the marking component
  have constant maps to $\PP^1$,
  and hence will not collapse when projected to $G/P$.
  But the marking component $C \subseteq \Sigma$ may, or may not, collapse
  in $G/P$.
  
  If the marking component $C$ does collapse in $G/P$, then the two
  $\rhotilde$-fixed points on $C$ map to the same element of $W/W_P$.
  As such, the strata in the sncd are indexed by strictly increasing chains
  $\nu \subseteq W/W_P$ where either one element $\nu_i$ is marked
  (if $C$ collapses in $G/P$) or one edge is marked (to indicate that
  that is $C$. The dual simplicial complex is again a Bj\"orner-Wachs complex,
  now for an open interval $((u,12),(v,21)) \subseteq W/W_P \times S_2$.
  
  We could instead index them by weakly increasing chains $\nu$ with one edge
  $(\nu_i,\nu_{i+1})$ marked, such that unmarked edges in the chain
  must have strict increase. Then
  $$ M_\nu \iso
  \wt{X_{\nu_0}^{\nu_1}}_- \times
  \cdots \times
  \wt{X_{\nu_{i-1}}^{\nu_i}}_- \times
  \wt{X_{\nu_{i}}^{\nu_{i+1}}} \times
  \wt{X_{\nu_{i+1}}^{\nu_{i+2}}}_- \times
  \cdots \times
  \wt{X_{\nu_{k}}^{\nu_{k+1}}}_-
  $$
  In particular, inductive understanding of the strata in $\wt{X_\lambda^\mu}_-$
  only requires products of other such spaces (with the ${}_-$),
  whereas inductive understanding of the strata in $\wt{X_\lambda^\mu}$
  requires one to also think about the $\wt{X_{\lambda'}^{\mu'}}_-$ spaces.
}
  
\junk{
  In particular, we describe the divisor strata in the two spaces.
  In $\wt{X_\lambda^\mu}_-$ there is one divisor for each $\nu \in (\lambda,\mu)$,
  in which the curves (generically) have one node and that node maps to $\nu$.
  In $\wt{X_\lambda^\mu}$ there are two special divisors, and two kinds of
  non-special divisors, in all of which the curves (generically) have one node.
  In the first two divisors, the marking component collapses to $\lambda$
  or to $\mu$, and in the
}

We discuss the ball vs. sphere question further in \S\ref{sec:anticanonical}.

\section{Isotropy actions and Betti numbers}
\label{sec:Tweights}

In this rather technical section we drill further into the type (iii)
summands from \S\ref{ssec:analyzing}, in the special cases $G/P$ a
Grassmannian or full flag manifold.

Assume that $S\actson M$ is a circle action with isolated fixed points.
We compute the tangent space $T_{[\Sigma,\gamma]} (M//_{StMap}\, S)$
as the $S$-invariant subspace inside the tangent space described
in \S\ref{ssec:analyzing}.
Enumerate $\Sigma$'s components as $C_0 \cup C_1 \cup \ldots \cup C_k$,
as in proposition \ref{prop:sections}, allowing us to write
$H^0\!\left(\wt\Sigma;\ \gamma^* TM\right)
= \Oplus_{i=0\ldots k} H^0\!\left(C_i;\ \gamma^* TM\right)$.
Each node in $\Sigma$ is $S$-fixed,
thus maps to an $S$-fixed point in $M$; by the assumption $M^S$ finite
we know $(T_{\gamma(f)}M)^S = 0$. Hence the left Mayer-Vietoris map
$H^0(\Sigma;\ \gamma^* TM) \to H^0\!\left(\wt\Sigma;\ \gamma^* TM\right)$
becomes an isomorphism on the $S$-invariants. In all,
$$
T_{[\Sigma,\gamma]} M//_{StMap}\, S \quad\iso\quad
\Oplus_{n\in\text{nodes}(\Sigma)} T_{n^-}(\wt \Sigma)\tensor T_{n^+}(\wt \Sigma)
\quad\oplus\quad \Oplus_{i=0\ldots k} H^0\!\left(C_i;\ \gamma^* TM\right)^S
$$
where $n^-,n^+ \mapsto n$ under the normalization $\wt\Sigma \to \Sigma$.
Note that the first summands are already $S$-invariant,
by proposition \ref{prop:dual}.
If the map $\gamma$ is equivariant w.r.t. some larger torus $\wt T \geq S$,
then this isomorphism is $\wt T$-equivariant (or, one might say,
$\wt T/S$-equivariant).

To compute the latter summands, we need the following technical lemma.
One way in which it is technical is that we describe the weights
of a finite extension $\wt T$ of $T$ using rational combinations of
those of $T$; expressions like this are possible due to
the inclusion $T^* \into \wt T^*$ of weight lattices being
rationally an isomorphism.

\begin{Lemma}\label{lem:Sinvariants}
  Let $T\actson M$ smooth projective with isolated fixed points.
  Let $\wt T$ be a finite extension of $T$.
  Let $S \leq \wt T$ be a one-parameter subgroup (i.e. with a fixed
  isomorphism $S \iso \CC^\times$) such that $M^S$
  is again finite. Let $C\iso \PP^1$ carry a $\wt T$-action such that
  $S$ acts with trivial generic stabilizer.
  In particular $C^S = \{0,\infty\}$, ordered so that $S$ acts
  on $T_0 C$ with weight $+1$. Let $\gamma\colon C \to M$ be $\wt T$-equivariant.
  
  Assume $\gamma(C)$ is normal,\footnote{One can easily modify the
    argument to work instead with the normalization, but this isn't
    needed in our motivating examples, and seemed unnecessarily distracting.}
  so $TM|_{\gamma(C)}$ is a sum of line bundles. Assume that $M$ is convex
  i.e. that these line bundles are nonnegative. There is no harm
  in replacing this list
  by a $K_T$-equivalent sum $\Oplus_i \calL_i$ of nonnegative line bundles.
  (Such $K_T$-equivalences are easy to verify when true;
  it suffices that $\Oplus_i \calL_i$ have the same isotropy $T$-weights
  at $\gamma(0),\gamma(\infty)$ as $TM$ has.)

  Let $\calL_i|_0$, $\calL_i|_\infty$ denote the restrictions
  of these line bundles to the points $\gamma(0)$, $\gamma(\infty)$.
  For $L$ a $1$-dimensional $T$-representation, write
  $wt_{T}(L) \in T^*$ for its weight, likewise $wt_S(L) \in S^* \iso \ZZ$.

  Then the $\wt T$-representation $H^0(C;\ \gamma^*TM)^S$ can be written
  as the following direct sum over some $\{\calL_i\}$.
  For each $\calL_i$, if $0 \in [wt_S(\calL_i|_0), wt_S(\calL_i|_\infty)]$,
  then $H^0(C;\ \gamma^*\calL_i)^S$ is $1$-dimensional with $\wt T$-weight
  $$ \frac{1}{dm} \bigg(
  wt_S(\calL_i|_\infty) wt_{T}(\calL_i|_0)
  - wt_S(\calL_i|_0) wt_{T}(\calL_i|_\infty)
  \bigg) $$
  where $d$ is the size of the generic stabilizer of $S$ on $\gamma(C)$
  and $m = \deg \calL_i$.
  If however $0$ does not lie in that interval, then
  $H^0(C;\ \gamma^*\calL_i)^S = 0$.
\end{Lemma}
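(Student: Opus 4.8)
The plan is to reduce everything to an explicit computation on $C \cong \PP^1$ with a linearized $\Gm$-action, since $H^0(C;\gamma^*TM)$ splits $\wt T$-equivariantly as $\bigoplus_i H^0(C;\gamma^*\calL_i)$ and we may treat each $\calL_i$ separately. First I would set up coordinates: pick an $S$-equivariant identification $C \cong \PP^1$ with homogeneous coordinates $[x:y]$ so that $S$ acts with weight $+1$ on $T_0C$ — equivalently $z\cdot[x:y] = [x:z^{-1}y]$ in the affine chart where $0 = [1:0]$, $\infty = [0:1]$ — but remembering that it is $\wt T$, not $S$, that acts genuinely on $C$; the parametrization $\gamma$ factors through $C/(\ZZ/d)$ on the open orbit, which is where the factor $d$ will enter. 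Next, for a line bundle $\calL = \calO(m)$ on $\PP^1$ of degree $m = \deg\calL_i \geq 0$, the space $H^0(\PP^1;\calL)$ has basis the monomials of degree $m$ in $x,y$; the $k$-th such section (vanishing to order $k$ at $0$ and order $m-k$ at $\infty$, $0\le k\le m$) has a weight under any torus acting on $(C,\calL)$ that interpolates linearly between $wt(\calL|_\infty)$ at $k=0$ and $wt(\calL|_0)$ at $k=m$: concretely its weight is $\tfrac{k}{m}wt(\calL|_0) + \tfrac{m-k}{m}wt(\calL|_\infty)$, once one keeps careful track of whether the zero is at $0$ or $\infty$ (this is the routine but sign-sensitive bookkeeping step, and is the one I'd be most careful with).

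With that in hand, the $S$-invariant part is picked out by the single arithmetic condition that the $S$-weight of the $k$-th section vanishes: $wt_S$ of the $k$-th section is $\tfrac{k}{m}wt_S(\calL|_0) + \tfrac{m-k}{m}wt_S(\calL|_\infty)$, and this is $0$ for (a unique, generically non-integer but that's fine since $\gamma$ only sees the pullback) value of $k$ precisely when $0$ lies in the closed interval between $wt_S(\calL|_0)$ and $wt_S(\calL|_\infty)$; outside that interval no section is $S$-fixed and $H^0(C;\gamma^*\calL_i)^S = 0$, giving the second case. When $0$ does lie in the interval, solving $\tfrac{k}{m}wt_S(\calL|_0) + \tfrac{m-k}{m}wt_S(\calL|_\infty)=0$ for $k/m$ and substituting into the $\wt T$-weight formula $\tfrac{k}{m}wt_T(\calL|_0) + \tfrac{m-k}{m}wt_T(\calL|_\infty)$ yields, after clearing the denominator $wt_S(\calL|_0) - wt_S(\calL|_\infty)$, exactly
\[
\frac{wt_S(\calL_i|_\infty)\,wt_T(\calL_i|_0) - wt_S(\calL_i|_0)\,wt_T(\calL_i|_\infty)}{wt_S(\calL_i|_\infty) - wt_S(\calL_i|_0)};
\]
one then checks that $wt_S(\calL_i|_\infty) - wt_S(\calL_i|_0)$ equals $dm$ up to sign — the factor $m$ because the $S$-weights at the two ends of a degree-$m$ line bundle on $C$ differ by $m$ times the $S$-weight on $T_0 C$ pulled back, and the factor $d$ because $\gamma$ only parametrizes $C/(\ZZ/d)$, rescaling that weight — matching the claimed denominator $dm$.

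The main obstacle I anticipate is not conceptual but notational: the statement deliberately allows $\wt T$-weights to be written as rational combinations of $T$-weights (using $T^*\hookrightarrow \wt T^*$ a rational isomorphism), and the clean formula only emerges after correctly threading the degree $m$, the generic-stabilizer size $d$, and the two competing orientations (which end is "$0$", which section vanishes where) through the linear interpolation. I would therefore prove it first in the baseline case $d=1$, $T=S=\Gm$ acting standardly, verify the formula literally on monomials, and then observe that passing to a finite extension $\wt T$ and to the $d$-fold cover only rescales the relevant $S$-direction weight, which is already visible in the denominator, so the general formula follows by $K_T$-linearity of both sides in $\{\calL_i\}$ together with the $K_T$-equivalence hypothesis that lets us replace $TM|_{\gamma(C)}$ by $\bigoplus_i\calL_i$. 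The convexity (nonnegativity of the $\calL_i$) is used exactly to guarantee $H^1(C;\gamma^*\calL_i)=0$, so that $H^0$ computes the whole contribution and the dimension count ($0$ or $1$ on the $S$-invariants) is as claimed.
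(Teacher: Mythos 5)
Your proposal is correct and follows essentially the same route as the paper's proof: reduce to a $K_T$-equivalent sum of nonnegative line bundles using convexity (so $H^1$ vanishes and only the Euler characteristic matters), note that $\gamma$ has degree $d$ so $\gamma^*\calL_i$ has degree $dm$, write the $\wt T$-weights of $H^0$ as convex combinations $f\,wt_T(\calL_i|_0)+(1-f)\,wt_T(\calL_i|_\infty)$ with $f\in[0,1]\cap\frac{1}{dm}\ZZ$, and solve the $S$-invariance condition to get $f=wt_S(\calL_i|_\infty)/(dm)$, which exists exactly when $0$ lies in the stated interval. Your bookkeeping of the denominator $wt_S(\calL_i|_\infty)-wt_S(\calL_i|_0)=dm$ and of the automatic integrality of $f$ matches the paper's argument, so no changes are needed.
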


In the applications the composite $S \into \wt T \onto T$ is injective,
but this condition doesn't appear in the proof so we didn't assume it.

\begin{proof}
  If $M$ is convex, then instead of computing $H^0(\Sigma;\ \gamma^* TM)$
  we can equivalently compute the sheaf Euler characteristic. That
  depends only on the $K_T$-class, which allows us to work with any other
  $K_T$-equivalent sum of line bundles that have no higher sheaf cohomology.
  Since $M$ is smooth projective we know $K_T(M) \into K_T(M^T)$ is
  injective, establishing the parenthetical claim in the lemma's statement.

  As in the proof of proposition \ref{prop:curvestab}, the $S$-equivariance
  forces the degree of $\gamma$ to be the size of the generic stabilizer
  of $S$ on $\gamma(C)$ (divided by that on $C$, which was assumed to be $1$).
  We called that degree ``$d$'' in the statement, and so,
  we have learned $\deg \gamma = d$.

  Let $m = \deg(\calL_i)$, which by assumption is $\geq 0$.
  Then $\deg(\gamma^* \calL_i) = dm \geq 0$, with a $(dm+1)$-dimensional
  $H^0$ (and no $H^1$). The $\wt T$-weights on
  $H^0(C;\, \gamma^*\calL_i)$ are of the form
  $$ f \  wt_{T}(\calL_i|_0) + (1-f) \  wt_{T}(\calL_i|_\infty), \qquad
  f \in [0,1] \cap \frac{1}{dm}\ZZ $$
  For such a $1$-dim representation to be $S$-invariant, we need the
  corresponding convex combination of $S$-weights to come out $0$.
  That is achievable with
  $f \in [0,1]$ if and only if $0 \in [wt_S(\calL_i|_0), wt_S(\calL_i|_\infty)]$.
  In detail,
  $$ f \  wt_{S}(\calL_i|_0) + (1-f) \  wt_{S}(\calL_i|_\infty) = 0
  \quad\text{hence}\quad
  f = \frac{wt_{S}(\calL_i|_\infty)} {wt_{S}(\calL_i|_\infty) - wt_{S}(\calL_i|_0)}
  = \frac{wt_{S}(\calL_i|_\infty)} {dm}
  $$
  giving the promised $\wt T$-weight
  $ \frac{wt_{S}(\calL_i|_\infty)} {dm}  wt_{T}(\calL_i|_0)
  - \frac{wt_{S}(\calL_i|_0)}{dm} \  wt_{T}(\calL_i|_\infty) $.
\end{proof}

(It seems remarkable that the condition $f \in \frac{1}{dm}\ZZ$
holds automatically.) To sum up:

\begin{Theorem}\label{thm:Tspace}
  Let $T \actson M$ smooth projective with $M^T$ finite,
  and $S \leq T$ be a circle subgroup with $M^S$ also finite.
  Let $[\Sigma,\gamma] \in (M //_{StMap}\, S)^T$, and order the
  components $\Sigma = C_0 \cup_{n_1} C_1 \cup_{n_2} \ldots \cup_{n_k} C_k$
  as in proposition \ref{prop:sections}. Let $n_i^-$ resp. $n_i^+$
  be the preimage of $n_i$ in $C_{i-1}$ resp. $C_i$ under the
  normalization map $\wt\Sigma \onto \Sigma$.

  Assume $M$ is convex, that each $\gamma(C_i)$ is normal,
  and that $TM|_{\gamma(C_i)}$ is $K_T$-equivalent to
  a sum $T\gamma(C_i) \oplus \Oplus_j \calL_{i,j}$ of nonnegative line bundles.
  Let $\wt T$ be an extension of $T$ that acts on $\Sigma$, with $\gamma$
  $\wt T$-equivariant.

  Then $T_{[\Sigma,\gamma]} M //_{StMap}\, S$, as a $\wt T$-representation,
  is the sum of the following weight spaces:
  \begin{enumerate}
  \item For each node $n_i$, the tensor product
    $T_{n_i^-} C_{i-1} \tensor T_{n_i^+} C_{i}$ of tangent spaces
    to the components.
  \item For each line bundle $\calL_{i,j}$
    with $0 \in [wt_S(\calL_{i,j}|_0), wt_S(\calL_{i,j}|_\infty)]$,
    the $1$-dim representation with weight
    $\frac{1}{d_i m_i} \bigg(
    wt_S(\calL_i|_\infty) wt_{T}(\calL_i|_0)
    - wt_S(\calL_i|_0) wt_{T}(\calL_i|_\infty)
    \bigg)$
    where $d_i$ is the size of the generic $S$-stabilizer on $\gamma(C_i)$
    and $m_i = \deg(\calL_i)$. 
  \end{enumerate}
\end{Theorem}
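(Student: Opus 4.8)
The plan is to assemble the statement from the $S$-invariant tangent space analysis carried out above together with Lemma \ref{lem:Sinvariants}, applied one component at a time. Recall that $T_{[\Sigma,\gamma]}(M//_{StMap}\, S)$ is the $S$-fixed subspace of $T_{[\Sigma,\gamma]}StMap_\beta(\PP^1,M)$ (in characteristic $0$, the tangent space to a fixed locus is the invariant subspace), and by \S\ref{ssec:analyzing} the ambient tangent space is the sum of the three terms (i), (ii), (iii). Term (i) vanishes since every component of the caterpillar $\Sigma$ meets at most two others. Term (ii) is $\bigoplus_i T_{n_i^-}C_{i-1}\tensor T_{n_i^+}C_i$, and by Proposition \ref{prop:dual} each summand is already $S$-invariant, so it lies in the tangent space to the quotient unchanged; this is part (1).

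So the work is in term (iii), $\coker\!\big(H^0(\Sigma;T\Sigma)\to H^0(\Sigma;\gamma^*TM)\big)$. Taking $S$-invariants is exact in characteristic $0$, so term (iii)$^S=\coker\!\big(H^0(\Sigma;T\Sigma)^S\to H^0(\Sigma;\gamma^*TM)^S\big)$; and since the nodes of $\Sigma$ map into $M^S$, where there are no invariant tangent vectors, the Mayer-Vietoris restriction $H^0(\Sigma;\gamma^*TM)^S\to H^0(\wt\Sigma;\gamma^*TM)^S=\bigoplus_i H^0(C_i;\gamma^*TM)^S$ is an isomorphism. I would then pin down the reparametrization image: a local check at each node, $\Spec k[x,y]/(xy)$ with $x,y$ of opposite $S$-weight (Corollary \ref{cor:chain}), shows that an $S$-invariant derivation of $\Sigma$ restricts on each $C_i$ to an arbitrary multiple of the vector field generating the $S$-action, with no matching condition at the nodes; hence $H^0(\Sigma;T\Sigma)^S$ is $(k+1)$-dimensional and $d\gamma$ sends it isomorphically onto $\bigoplus_i H^0(C_i;\gamma^*T\gamma(C_i))^S$, each summand being the one-dimensional line spanned by the $S$-generating field on $\gamma(C_i)$, which is nonzero since $\gamma$ does not collapse $C_i$. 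Since $M$ is convex, $H^0$ of a nonnegative line bundle on $\PP^1$, and of its pullback to $C_i$, depends only on the $K_T$-class, so the splitting $TM|_{\gamma(C_i)}\sim_{K_T}T\gamma(C_i)\oplus\bigoplus_j\calL_{i,j}$ gives $H^0(C_i;\gamma^*TM)^S\iso H^0(C_i;\gamma^*T\gamma(C_i))^S\oplus\bigoplus_j H^0(C_i;\gamma^*\calL_{i,j})^S$ as $\wt T$-representations. Dividing out the reparametrizations therefore leaves term (iii)$^S\iso\bigoplus_{i,j}H^0(C_i;\gamma^*\calL_{i,j})^S$.

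Finally I would invoke Lemma \ref{lem:Sinvariants} for each $\calL_{i,j}$ on its component $C_i$, using as the lemma's circle the one-parameter subgroup of $\wt T$ that acts on $C_i$ with trivial generic stabilizer (it exists since $C_i$ is uncollapsed), with $d_i$ the order of the generic $S$-stabilizer on $\gamma(C_i)$ and $m_i=\deg\calL_{i,j}$: when $0\in[wt_S(\calL_{i,j}|_0),wt_S(\calL_{i,j}|_\infty)]$ this produces the one-dimensional $\wt T$-weight space displayed in part (2), and otherwise $0$. Combining this with the node summands of term (ii) gives the theorem.

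The step I expect to be the main obstacle is the reparametrization bookkeeping of the second paragraph: pinning down $H^0(\Sigma;T\Sigma)^S$ and verifying that $d\gamma$ matches it exactly with the $T\gamma(C_i)$-summands, so that what survives in the tangent space to the quotient is precisely $\bigoplus_{i,j}H^0(C_i;\gamma^*\calL_{i,j})^S$. A subsidiary point, routine in characteristic $0$, is that passing to $S$-invariants commutes with replacing a bundle by a $K_T$-equivalent sum of nonnegative line bundles, since the pulled-back bundles stay nonnegative on $\PP^1$ and so have vanishing $H^1$. By contrast, parts (1) and (2) fall out immediately once Proposition \ref{prop:dual} and Lemma \ref{lem:Sinvariants} are in hand.
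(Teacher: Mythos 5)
Your proposal is correct and follows essentially the same route as the paper: take $S$-invariants of the three tangent-space terms from \S\ref{ssec:analyzing}, use the Mayer--Vietoris isomorphism at the nodes, and apply Lemma \ref{lem:Sinvariants} componentwise, with the cokernel in term (iii) handled by splitting off the $T\gamma(C_i)$ summand (the weight-zero section matching the reparametrizations). The only difference is that you spell out the reparametrization bookkeeping ($H^0(\Sigma;T\Sigma)^S$ being one-dimensional per component and mapping isomorphically onto the weight-zero invariant sections of $\gamma^*T\gamma(C_i)$), which the paper compresses into the remark that the tangent bundle of each curve is "separated out as a summand we then ignore."
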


\begin{proof}
  This collects the foregoing results, with the only extra feature being that
  the tangent bundle of each curve is separated out as a summand we
  then ignore. This is in order to handle the cokernel computation in (iii)
  of \S \ref{ssec:analyzing}.
\end{proof}

\subsection{The Grassmannian case}\label{ssec:Gr}

Index the $n\choose k$
$T$-fixed points on $Gr(k,\CC^n)$ by English partitions inside the
$k\times (n-k)$ rectangle, where the Bruhat minimum is the empty
partition.  The $T$-fixed points on
$Gr(k,\CC^n)_\lambda^\mu//_{StMap}\, \rhocek$ are then indexed by
chains $(\nu_0=\lambda,$ $\nu_1,\ldots,\nu_k,$ $\nu_{k+1}=\mu)$ of
partitions, where each $\nu_i = r_\beta\cdot \nu_{i-1} > \nu_{i-1}$.
One way to describe these chains is with \defn{standard rim-hook
  skew-tableaux $\tau$ of shape $\mu\setminus\lambda$}, where a
\defn{rim-hook} is an edge-connected skew-partition containing no
$2\times 2$ box, and the rim-hooks in $\tau$ are numbered
$1\ldots k+1$ from Northwest to Southeast. For example, here are the
$14$ SRHSTx of shape $(3,2)\setminus (1)$, with the ``$a+b$'' captions
to be explained later:
$$
{  \begin{tikzpicture}[scale=0.5]
\draw (0,2) -- (0,3) -- (1,3) -- (1,4) -- (2,4) -- (3,4);
\path (0.5,2.5) node {1} ;
\draw (0,2) -- (1,2) -- (1,3) -- (1,4) -- (2,4) -- (3,4);
\path (1.5,3.5) node {2} ;
\draw (0,2) -- (1,2) -- (2,2) -- (2,3) -- (2,4) -- (3,4);
\path (2.5,3.5) node {3} ;
\draw (0,2) -- (1,2) -- (2,2) -- (2,3) -- (3,3) -- (3,4);
\end{tikzpicture} \atop 2+1}
\quad{\begin{tikzpicture}[scale=0.5]
\draw (0,2) -- (0,3) -- (1,3) -- (1,4) -- (2,4) -- (3,4);
\path (0.5,2.5) node {1} ;
\draw (0,2) -- (1,2) -- (1,3) -- (1,4) -- (2,4) -- (3,4);
\path (1.5,3.5) node {2} ;
\draw (0,2) -- (1,2) -- (1,3) -- (2,3) -- (2,4) -- (3,4);
\path (1.5,2.5) node {3} ;
\draw (0,2) -- (1,2) -- (2,2) -- (2,3) -- (2,4) -- (3,4);
\path (2.5,3.5) node {4} ;
\draw (0,2) -- (1,2) -- (2,2) -- (2,3) -- (3,3) -- (3,4);
\end{tikzpicture} \atop 2+0}
\quad{\begin{tikzpicture}[scale=0.5]
\draw (0,2) -- (0,3) -- (1,3) -- (1,4) -- (2,4) -- (3,4);
\path (0.5,2.5) node {1} ;
\draw (0,2) -- (1,2) -- (1,3) -- (1,4) -- (2,4) -- (3,4);
\path (1.5,3.5) node {2} ;
\draw (0,2) -- (1,2) -- (1,3) -- (2,3) -- (2,4) -- (3,4);
\path (2.5,3.5) node {3} ;
\draw (0,2) -- (1,2) -- (1,3) -- (2,3) -- (3,3) -- (3,4);
\path (1.5,2.5) node {4} ;
\draw (0,2) -- (1,2) -- (2,2) -- (2,3) -- (3,3) -- (3,4);
\end{tikzpicture} \atop 2+0}
\quad{\begin{tikzpicture}[scale=0.5]
\draw (0,2) -- (0,3) -- (1,3) -- (1,4) -- (2,4) -- (3,4);
\path (0.5,2.5) node {1} ;
\draw (0,2) -- (1,2) -- (1,3) -- (1,4) -- (2,4) -- (3,4);
\path (2.5,3.5) node {2} ;
\draw (0,2) -- (1,2) -- (2,2) -- (2,3) -- (3,3) -- (3,4);
\end{tikzpicture} \atop 1+1}
\quad{\begin{tikzpicture}[scale=0.5]
\draw (0,2) -- (0,3) -- (1,3) -- (1,4) -- (2,4) -- (3,4);
\path (1.5,3.5) node {1} ;
\draw (0,2) -- (1,2) -- (2,2) -- (2,3) -- (2,4) -- (3,4);
\path (2.5,3.5) node {2} ;
\draw (0,2) -- (1,2) -- (2,2) -- (2,3) -- (3,3) -- (3,4);
\end{tikzpicture} \atop 1+1}
\quad{\begin{tikzpicture}[scale=0.5]
\draw (0,2) -- (0,3) -- (1,3) -- (1,4) -- (2,4) -- (3,4);
\path (1.5,3.5) node {1} ;
\draw (0,2) -- (0,3) -- (1,3) -- (2,3) -- (2,4) -- (3,4);
\path (2.5,3.5) node {2} ;
\draw (0,2) -- (0,3) -- (1,3) -- (2,3) -- (3,3) -- (3,4);
\path (0.5,2.5) node {3} ;
\draw (0,2) -- (1,2) -- (1,3) -- (2,3) -- (3,3) -- (3,4);
\path (1.5,2.5) node {4} ;
\draw (0,2) -- (1,2) -- (2,2) -- (2,3) -- (3,3) -- (3,4);
\end{tikzpicture} \atop 2+0}
\quad{\begin{tikzpicture}[scale=0.5]
\draw (0,2) -- (0,3) -- (1,3) -- (1,4) -- (2,4) -- (3,4);
\path (1.5,3.5) node {1} ;
\draw (0,2) -- (0,3) -- (1,3) -- (2,3) -- (2,4) -- (3,4);
\path (0.5,2.5) node {2} ;
\draw (0,2) -- (1,2) -- (1,3) -- (2,3) -- (2,4) -- (3,4);
\path (1.5,2.5) node {3} ;
\draw (0,2) -- (1,2) -- (2,2) -- (2,3) -- (2,4) -- (3,4);
\path (2.5,3.5) node {4} ;
\draw (0,2) -- (1,2) -- (2,2) -- (2,3) -- (3,3) -- (3,4);
\end{tikzpicture} \atop 2+0}
$$
$$
{\begin{tikzpicture}[scale=0.5]
\draw (0,2) -- (0,3) -- (1,3) -- (1,4) -- (2,4) -- (3,4);
\path (1.5,3.5) node {1} ;
\draw (0,2) -- (0,3) -- (1,3) -- (2,3) -- (2,4) -- (3,4);
\path (0.5,2.5) node {2} ;
\draw (0,2) -- (1,2) -- (1,3) -- (2,3) -- (2,4) -- (3,4);
\path (2.5,3.5) node {3} ;
\draw (0,2) -- (1,2) -- (1,3) -- (2,3) -- (3,3) -- (3,4);
\path (1.5,2.5) node {4} ;
\draw (0,2) -- (1,2) -- (2,2) -- (2,3) -- (3,3) -- (3,4);
\end{tikzpicture} \atop 1+0}
\quad{\begin{tikzpicture}[scale=0.5]
\draw (0,2) -- (0,3) -- (1,3) -- (1,4) -- (2,4) -- (3,4);
\path (0.5,2.5) node {1} ;
\draw (0,2) -- (1,2) -- (1,3) -- (1,4) -- (2,4) -- (3,4);
\path (2.5,3.5) node {2} ;
\draw (0,2) -- (1,2) -- (1,3) -- (2,3) -- (3,3) -- (3,4);
\path (1.5,2.5) node {3} ;
\draw (0,2) -- (1,2) -- (2,2) -- (2,3) -- (3,3) -- (3,4);
\end{tikzpicture} \atop 1+0}
\quad{\begin{tikzpicture}[scale=0.5]
\draw (0,2) -- (0,3) -- (1,3) -- (1,4) -- (2,4) -- (3,4);
\path (1.5,3.5) node {1} ;
\draw (0,2) -- (0,3) -- (1,3) -- (2,3) -- (2,4) -- (3,4);
\path (1.5,2.5) node {2} ;
\draw (0,2) -- (1,2) -- (2,2) -- (2,3) -- (2,4) -- (3,4);
\path (2.5,3.5) node {3} ;
\draw (0,2) -- (1,2) -- (2,2) -- (2,3) -- (3,3) -- (3,4);
\end{tikzpicture} \atop 1+0}
\quad{\begin{tikzpicture}[scale=0.5]
\draw (0,2) -- (0,3) -- (1,3) -- (1,4) -- (2,4) -- (3,4);
\path (1.5,3.5) node {1} ;
\draw (0,2) -- (0,3) -- (1,3) -- (2,3) -- (2,4) -- (3,4);
\path (2.5,3.5) node {2} ;
\draw (0,2) -- (0,3) -- (1,3) -- (2,3) -- (3,3) -- (3,4);
\path (1.5,2.5) node {3} ;
\draw (0,2) -- (1,2) -- (2,2) -- (2,3) -- (3,3) -- (3,4);
\end{tikzpicture} \atop 1+0}
\quad{\begin{tikzpicture}[scale=0.5]
\draw (0,2) -- (0,3) -- (1,3) -- (1,4) -- (2,4) -- (3,4);
\path (2.5,3.5) node {1} ;
\draw (0,2) -- (1,2) -- (2,2) -- (2,3) -- (3,3) -- (3,4);
\end{tikzpicture} \atop 0+1}
\quad{\begin{tikzpicture}[scale=0.5]
\draw (0,2) -- (0,3) -- (1,3) -- (1,4) -- (2,4) -- (3,4);
\path (2.5,3.5) node {1} ;
\draw (0,2) -- (0,3) -- (1,3) -- (2,3) -- (3,3) -- (3,4);
\path (0.5,2.5) node {2} ;
\draw (0,2) -- (1,2) -- (1,3) -- (2,3) -- (3,3) -- (3,4);
\path (1.5,2.5) node {3} ;
\draw (0,2) -- (1,2) -- (2,2) -- (2,3) -- (3,3) -- (3,4);
\end{tikzpicture} \atop 1+0}
\quad{\begin{tikzpicture}[scale=0.5]
\draw (0,2) -- (0,3) -- (1,3) -- (1,4) -- (2,4) -- (3,4);
\path (2.5,3.5) node {1} ;
\draw (0,2) -- (0,3) -- (1,3) -- (2,3) -- (3,3) -- (3,4);
\path (1.5,2.5) node {2} ;
\draw (0,2) -- (1,2) -- (2,2) -- (2,3) -- (3,3) -- (3,4);
\end{tikzpicture} \atop 0+0}
$$

To apply theorem \ref{thm:Tspace} at the $T$-fixed points of
$X_\lambda^\mu//_{StMap}\, \rhocek$, we need to compute (1) the $\wt T$-weights
on the tangent spaces to the $T$-fixed curves, and (2) the restriction
of the tangent bundle (at least up to $K_T$-equivalence).
We compute these in the following lemma, where we found it more convenient
to index $Gr(k,\CC^n)^T$ by subsets $[n]\choose k$.

\begin{Lemma}\label{lem:grH0}
  Let $C \iso \PP^1$ carry the $\rhocek$-action with trivial generic stabilizer.
  Let $\lambda \in {[n] \choose k}$ with $\lambda \not\ni i<j \in \lambda$,
  and let $\gamma\colon C \to Gr(k,n)$ be the $\rhocek$-equivariant map
  whose image is the $T$-fixed curve connecting the points $\lambda$,
  $(i\leftrightarrow j)\cdot \lambda$.
  Lift the $T$-action on the target curve to a $\wt T$-action on $C$,
  with $\wt T$ a finite extension of $T$. 

  Then the $\wt T$-weight on $T_0 C$ (and negative the $\wt T$-weight on $T_\infty C$)
  is $(-y_i+y_j)/(j-i)$.
  Also, $H^0(\Sigma;\ \gamma^*TGr(k,\CC^n))^\rhocek$ has $\wt T$-weights
  $$ \left\{ (-1)^{[a\in \lambda]}
    \left( - \frac{j-a}{j-i} y_i + y_a - \frac{a-i}{j-i} y_j\right) \colon
  a \in (i,j) \right\} $$
  and one $0$ weight. Each weight occurs with multiplicity $1$.
\end{Lemma}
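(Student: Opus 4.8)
\noindent\emph{Proof plan.} The idea is to carry out the whole computation on the $T$-fixed curve $C':=\gamma(C)\subseteq Gr(k,\CC^n)$ and transport the answer back along $\gamma$ by Lemma~\ref{lem:Sinvariants}. Here $C'$ is the $\PP^1$ of $k$-planes $\mathrm{span}(e_a:a\in\lambda\setminus\{j\})+\CC\cdot(e_j+t\,e_i)$, $t\in\PP^1$, with $T$-fixed points $\lambda$ and $\mu:=(i\leftrightarrow j)\cdot\lambda$; being a smooth rational curve it is normal, so $TGr(k,\CC^n)|_{C'}$ splits $T$-equivariantly into line bundles, nonnegative by convexity of $Gr(k,\CC^n)$. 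Reading off the standard $T$-weights of $TGr(k,\CC^n)$ at a fixed point, $TC'$ has $T$-weight $y_i-y_j$ at $\lambda$ and $y_j-y_i$ at $\mu$, so $\rhocek$ acts on $TC'$ with weight $j-i$ at $\mu$ and $i-j$ at $\lambda$, whence the generic $\rhocek$-stabilizer on $C'$ has order $j-i$. For the first assertion: since $C$ carries a $\rhocek$-action with trivial generic stabilizer, $\gamma\colon C\to C'$ has degree $d=j-i$ and ramifies to order $d$ over each $\rhocek$-fixed point; the point $0\in C$, on which $\rhocek$ has tangent weight $+1$, maps to the $\rhocek$-fixed point of $C'$ with positive tangent weight, i.e.\ to $\mu$, so $wt_{\wt T}(T_0 C)=\tfrac{1}{d}\,wt_{\wt T}(T_\mu C')=(y_j-y_i)/(j-i)$ (weights read in $\wt T^*\otimes\QQ\iso T^*\otimes\QQ$), and $wt_{\wt T}(T_\infty C)=-wt_{\wt T}(T_0 C)$ automatically since $C\iso\PP^1$.

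For the second assertion I would apply Lemma~\ref{lem:Sinvariants} to $M=Gr(k,\CC^n)$, this $C$ and $\gamma$, and the line-bundle summands of $TGr(k,\CC^n)|_{C'}$; by convexity only the $K_T$-class of that bundle matters, and it is determined by its restrictions to $\lambda$ and $\mu$. I would recover the splitting by matching the $T$-weights of $T_\lambda Gr(k,\CC^n)$ to those of $T_\mu Gr(k,\CC^n)$: an $\calO(m)$-summand pairs a weight at one endpoint with a weight at the other whose difference is a nonnegative integer multiple $m$ of the $T$-weight of $T_\mu C'$, and convexity (forcing $m\ge0$) makes this pairing unique. The outcome: the summand $TC'=\calO(2)$; for each $b\in[n]\setminus(\lambda\cup\{i\})$ an $\calO(1)$ pairing the direction at $\lambda$ that deforms $e_j$ into $e_b$ with the direction at $\mu$ that deforms $e_i$ into $e_b$; for each $a\in\lambda\setminus\{j\}$ an $\calO(1)$ pairing the direction at $\lambda$ deforming $e_a$ into $e_i$ with the one at $\mu$ deforming $e_a$ into $e_j$; and for the remaining pairs $(a,b)$ with $a\in\lambda\setminus\{j\}$, $b\notin\lambda\cup\{i\}$, an $\calO(0)$ of weight $y_b-y_a$.

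Then I would run each summand $\calL$ through Lemma~\ref{lem:Sinvariants}: $H^0(C;\gamma^*\calL)^{\rhocek}$ is one-dimensional exactly when $0$ lies between the $\rhocek$-weights of $\calL$ at $\lambda$ and at $\mu$. For $TC'$ those are $\pm(j-i)$, so it contributes and the formula returns weight $0$ -- the single ``$0$ weight''. For the first-type $\calO(1)$ the $\rhocek$-weights are $b-j$ and $b-i$, straddling $0$ iff $i\le b\le j$, i.e.\ iff $b\in(i,j)$ (and then $b\notin\lambda$), where the formula gives $-\tfrac{j-b}{j-i}y_i+y_b-\tfrac{b-i}{j-i}y_j$; for the second-type $\calO(1)$ the $\rhocek$-weights are $i-a$ and $j-a$, straddling $0$ iff $a\in(i,j)$ (and then $a\in\lambda$), where the formula returns $-1$ times the same expression with $a$ in place of $b$; the $\calO(0)$-summands have a nonzero constant $\rhocek$-weight $y_b-y_a$ and contribute nothing. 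Collecting these over $a\in(i,j)$ -- exactly one of the two $\calO(1)$-types occurs for each such $a$, which is the origin of the factor $(-1)^{[a\in\lambda]}$ -- together with the one $0$ weight gives the asserted list; the weight attached to $a$ involves $y_a$ with coefficient $\pm1$ while the $0$ weight involves no $y_a$, so the weights are pairwise distinct, hence each occurs with multiplicity $1$.

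The real work is the weight-matching of the second assertion: one must be sure the pairing of fixed-point weights into $\calO(m)$'s is the correct one -- this is precisely where convexity of $Gr(k,\CC^n)$ is used, to rule out $m<0$, and where a short uniqueness argument is needed -- and then keep every sign straight, especially the identification of which endpoint of $C'$ is the image of $0\in C$, all the way through the formula of Lemma~\ref{lem:Sinvariants}. The rest is bookkeeping.
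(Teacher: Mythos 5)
Your proposal is correct and follows essentially the same route as the paper's proof: get the degree $d=j-i$ of $\gamma$ from the generic $\rhocek$-stabilizer on the image curve (the paper cites proposition \ref{prop:curvestab}), replace $TGr(k,\CC^n)|_{\gamma(C)}$ by a $K_T$-equivalent sum consisting of trivial bundles, two families of $\calO(1)$'s pairing the weights at $\lambda$ with those at $(i\leftrightarrow j)\cdot\lambda$, and one $\calO(2)$ (the curve's own tangent bundle, yielding the $0$ weight), and then feed each summand through lemma \ref{lem:Sinvariants}. The only inessential difference is that you spend effort arguing the pairing into line bundles is the unique genuine splitting, which is not needed: as you yourself note, by convexity only the $K_T$-class matters, and the parenthetical in lemma \ref{lem:Sinvariants} says matching the isotropy weights at the two fixed points already suffices, which is exactly how the paper proceeds (so the remaining care is only in the sign/orientation bookkeeping, which you track consistently with the statement).
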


\begin{proof}
  The $T$-weight on $T_0\gamma(C)$ is standardly calculable as $y_i-y_j$.
  On that curve, $\rhocek$ acts with weight $j-i$,
  so (from proposition \ref{prop:curvestab}) $\gamma$ must be of degree $j-i$. 
  On the $(j-i)$-fold cover $C$ of $\gamma(C)$, the $\wt T$-weight must
  therefore be $(y_i-y_j)/(j-i)$. 
  
  Recall for the next computation that the $T$-weights in
  $H^0(\PP(\oplus_i \CC_{\nu_i});\, \calO(1))$ are $\{-\nu_i\}$,
  the minus owing to $\calO(1)$ being the dual of the tautological bundle.

  For $\mu \in Gr(k,n)^T$, the $T$-weights in $T_\mu Gr(k,n)$ are
  $y_a-y_b$ for $a\notin \mu \ni b$; call this set $w_\mu$. Then
  \begin{eqnarray*}
    w_\lambda \cap w_{(i\leftrightarrow j)\cdot \lambda}
 &=& \left\{y_a-y_b \colon a\notin\lambda \ni b,\ a,b \notin \{i,j\} \right\}\\
    w_\lambda \setminus w_{(i\leftrightarrow j)\cdot \lambda}
 &=& \left\{y_a-y_j \colon a \in \lambda^c \setminus i \right\}
     \cup \left\{y_i-y_b \colon b \in \lambda \setminus j \right\}
     \cup \{y_i-y_j\} \\
 &=& \left\{y_a-y_j \colon a \in \lambda^c \setminus i \right\}
     \cup \left\{(y_i+y_j-y_b)-y_j \colon b \in \lambda \setminus j \right\}
     \cup \{y_i-y_j\} \\
    w_{(i\leftrightarrow j)\cdot \lambda} \setminus w_\lambda
 &=& \left\{y_a-y_i \colon a \in \lambda^c \setminus i \right\}
     \cup \left\{y_j-y_b \colon b \in \lambda \setminus j \right\}
     \cup \{y_j-y_i\} 
 \\ &=& \left\{y_a-y_i \colon a \in \lambda^c \setminus i \right\}
     \cup \left\{(y_j+y_i-y_b)-y_i \colon b \in \lambda \setminus j \right\}
     \cup \{y_j-y_i\} 
  \end{eqnarray*}
  Take this first row and three columns
  as defining four sets of $T$-equivariant line bundles:
  \begin{enumerate}
  \item trivial line bundles $\calO(0)$
    with weights $y_a-y_b$, $a\notin \lambda \ni b$, $a,b\notin \{i,j\}$,
  \item copies of $\calO(1)$ on $\PP(\CC_{y_i}\oplus \CC_{y_j})$
    twisted by the character $y_a$, $a \in \lambda^c\setminus i$,
  \item copies of $\calO(1)$ on $\PP(\CC_{y_i}\oplus \CC_{y_j})$
    twisted by the character $y_i+y_j-y_b$, $b \in \lambda\setminus j$,
    \\
    and finally
  \item one copy of the tangent bundle $\calO(2)$ on
    $\PP(\CC_{(\lambda_i-\lambda_j)/2} \oplus \CC_{(\lambda_j-\lambda_i)/2})$.\footnote{%
      It can happen that $(\lambda_i-\lambda_j)/2$ is only a weight
      for a double cover of $T$, i.e. it can happen that $T$ doesn't naturally
      act on $\calO(1)$ of this $\PP^1$. It does, however, act on $\calO(2)$.}
  \end{enumerate}
  At this point we can apply lemma \ref{lem:Sinvariants}.
  Group (1) does not have $0 \in [wt_S(\calL_{i,j}|_0), wt_S(\calL_{i,j}|_\infty)]$.
  In group (2) i.e. $a\notin \lambda$ we're looking for the
  convex combination of $\{-y_i$, $-y_j\}$, plus $y_a$, orthogonal to $\rhocek$.
  In group (3) i.e. $b\in \lambda$ we want the
  convex combination of $\{y_i$, $y_j\}$, minus $y_b$, orthogonal to $\rhocek$.
  So those two agree, up to a sign depending on membership in $\lambda$.
\end{proof}

\tikzset{mynode/.style={circle,draw=black,fill=black,inner sep=1.8pt,outer sep=0pt}}
\tikzset{whitenode/.style={circle,draw=black,fill=white,inner sep=1.8pt,outer sep=0pt}}
\tikzset{edgelabel/.style={\mcol,inner sep=0pt}}
\tikzset{invlabel/.style={draw=black,text=black,circle,inner sep=0pt,minimum size=3mm}}
\newcommand\clopen[3]{
  \draw (#2,-#1) node[mynode] {} -- (#3,-#1) node[whitenode] {} ; }

With these weights in hand we can compute the cell dimensions in a
Bia\l ynicki-Birula decomposition of the coarse moduli space,
obtaining thereby its rational Betti numbers.
This involves a choice of circle $S' \to T$, or equivalently a weighting
of the variables $y_i$; we pick a lexicographic one
$y_1 \gg y_2 \gg \cdots \gg y_n$. That is, a weight
$\sum_k c_k y_k$ is \defn{lex-positive}
based on the sign of the coefficient of the $y_i$ term with $c_i\neq 0$
having smallest $i$.

According to theorem \ref{thm:Tspace}, we have one contribution
$T_{n_i^-} C_{i-1} \tensor T_{n_i^+} C_{i}$ for each {\em intermediate} step
$\lambda_m$ in the chain (not $\lambda$ or $\mu$).
At $(\lambda_{m-1}, \lambda_m, \lambda_{m+1})$, define $a,b,c,d$ by
$\lambda_m = \lambda_{m-1}\cup \{a\}\setminus \{b\}$
and 
$\lambda_{m+1} = \lambda_m\cup \{c\}\setminus \{d\}$.
Then
$$
  wt_{\wt T}(T_{n_m^+}  C_{m}) = \frac{-y_c + y_d}{d-c} \qquad 
  wt_{\wt T}(T_{n_m^-} C_{m-1}) = \frac{ y_a - y_b}{b-a} 
$$
so the total (the weight of the tensor product) is lex-positive iff $a<c$.
(Note that $a=c$ is impossible here, though it will be possible
on flag manifolds, in lemma \ref{lem:flagsH0} to come.)

Again according to theorem \ref{thm:Tspace}, 
the other contributed weights are
$$ \left\{ (-1)^{[a\in \nu]}
  \left( - \frac{j-a}{j-i} y_i + y_a - \frac{a-i}{j-i} y_j\right) \colon
  a \in (i,j) \right\} $$
where $\nu, \nu \cup \{i\} \setminus \{j\}$ are successive steps in
the chain. Such a weight is lex-positive iff $a \notin \nu$.
We record this count:

\begin{Corollary}\label{cor:Grindices}
  Let $(\lambda_i)$ be a SRHST of shape $\mu\setminus \lambda$,
  indexing a $T$-fixed point on $X_\lambda^\mu//_{StMap}\, \rhocek$.
  Then the number of lex-positive weights in the tangent space to that point is
  \begin{enumerate}
  \item the number of rim-hooks numbered $i,i+1$ where the $i$th rim-hook has
    Southwesternmost box in a lower NW/SE diagonal than the $(i+1)$st
    rim-hook does,
    plus
  \item the number of places that a rim-hook could get cut into two
    along a horizontal edge.
  \end{enumerate}
  With these, we can compute $\dim H^{2k}(X_\lambda^\mu//_{StMap}\, \rhocek;\, \QQ)$
  as the number of SRHSTx with exactly $k$ lex-positive weights.
\end{Corollary}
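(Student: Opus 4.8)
The plan is to assemble the tangent‑weight data already produced above, sort those weights by lex‑sign, translate the resulting numerical conditions into the rim‑hook picture, and close with a Bia\l ynicki--Birula count. First I would apply theorem \ref{thm:Tspace} at the $T$‑fixed point of $X_\lambda^\mu//_{StMap}\,\rhocek$ indexed by a SRHST, i.e. a chain $(\nu_0=\lambda,\nu_1,\ldots,\nu_k,\nu_{k+1}=\mu)$ with successive steps $\nu_{m-1}\subset\nu_m$ effected by transpositions of subsets: the tangent space of the stack is the sum of (i) one node summand $T_{n_m^-}C_{m-1}\tensor T_{n_m^+}C_m$ for each intermediate $m=1,\dots,k$, and (ii) for each component $C_m$, $m=0,\dots,k$, the $\rhocek$‑invariants $H^0(C_m;\gamma^*TGr(k,\CC^n))^{\rhocek}$ \emph{with the one‑dimensional weight‑$0$ piece of $\gamma^*T\gamma(C_m)$ removed} --- this is precisely the ``one $0$ weight'' appearing in lemma \ref{lem:grH0}, split off in theorem \ref{thm:Tspace} to implement the cokernel in (iii) of \S\ref{ssec:analyzing}. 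Lemma \ref{lem:grH0} then evaluates both pieces explicitly: writing $\nu_m=\nu_{m-1}\cup\{a\}\setminus\{b\}$ and $\nu_{m+1}=\nu_m\cup\{c\}\setminus\{d\}$, the node weight is $\frac{y_a-y_b}{b-a}+\frac{y_d-y_c}{d-c}$, and for the component $C_m$ joining $\nu_m$ to $\nu_{m+1}=\nu_m\cup\{i\}\setminus\{j\}$ the surviving $H^0$‑weights are the $|j-i|-1$ weights $(-1)^{[a\in\nu_m]}\bigl(-\frac{j-a}{j-i}y_i+y_a-\frac{a-i}{j-i}y_j\bigr)$ for $a$ strictly between $i$ and $j$.

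Next I would sort these weights by the lexicographic cocharacter $y_1\gg\cdots\gg y_n$, which the paragraphs just before the corollary already carry out: a node weight is lex‑positive exactly when $a<c$ (and $a=c$ cannot occur on a Grassmannian), while the $H^0$‑weight at parameter $a$ is lex‑positive exactly when $a\notin\nu_m$. Hence the number of lex‑positive weights in the tangent space at our fixed point is
\[
\#\{\,m:\ a_m<c_m\,\}\ +\ \sum_{m=0}^{k}\#\{\,a\ \text{strictly between}\ i_m,j_m:\ a\notin\nu_m\,\},
\]
and the content of parts (1)--(2) is exactly the identification of these two sums with the two rim‑hook counts stated.

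This combinatorial translation is the heart of the matter and the step I expect to be fussiest. Via the boundary‑path bijection between partitions in the $k\times(n-k)$ box and $k$‑subsets of $[n]$ (North steps recording the subset), the $m$‑th chain step slides one North step across the moved range, and $\nu_m\setminus\nu_{m-1}$ is a ribbon (no $2\times2$ box, being a rim‑hook) whose $|j_m-i_m|$ cells correspond to the unit steps swept over and whose Southwesternmost cell lies on a NW/SE diagonal that is a \emph{monotone} function of the moved index, so that the diagonal comparison in (1) is literally the inequality $a_m<c_m$. Likewise, the junctions between consecutive cells of the $m$‑th ribbon are indexed by the interior positions between $i_m$ and $j_m$, and the junction at $a$ is a \emph{horizontal} edge exactly in the case that makes ``$a\notin\nu_m$'' detect it; so $\#\{a\notin\nu_m\}$ counts exactly the horizontal edges along which ribbon $m$ can be cut in two, which is (2). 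The only genuine work is pinning down the orientation conventions --- which of $\nu_m,\nu_{m+1}$ plays the role of ``$\lambda$'' in lemma \ref{lem:grH0}, whether ``lower diagonal'' means smaller or larger content, and whether the horizontal junctions are the ``$a\in\nu_m$'' or ``$a\notin\nu_m$'' ones --- and for this the $SL_3$, $G/P=\PP^2$ example together with the tabulated $14$ SRHSTx of shape $(3,2)\setminus(1)$ and their $a+b$ captions provide a complete sanity check.

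Finally, the Betti‑number assertion. By theorem \ref{thm:smooth} the space $X_\lambda^\mu//_{StMap}\,\rhocek$ is a smooth orbifold, so its coarse moduli space $Y$ is projective and rationally smooth; the lexicographic circle has the same isolated fixed points as $T$, because every weight counted above is a nonzero $T$‑weight (rationally), hence lex‑nonzero. The Bia\l ynicki--Birula decomposition of $Y$ for this circle is then filtrable, with attracting cells that are rationally $\AA^{d_p}$, where $d_p$ is the number of lex‑positive weights in the stack tangent space at $p$ and, orbifold‑theoretically, the cells are $\AA^{d_p}/\Gamma_p$ for the finite orbifold group $\Gamma_p$; since $H^*(\AA^{d_p};\QQ)^{\Gamma_p}=\QQ$ concentrated in degree $0$, the long exact sequences of the filtration degenerate and give $H^*(Y;\QQ)$ free with one generator of degree $2d_p$ for each fixed point $p$, all even. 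Therefore $\dim H^{2k}(X_\lambda^\mu//_{StMap}\,\rhocek;\QQ)$ is the number of SRHSTx with exactly $k$ lex‑positive tangent weights, i.e. with $(1)+(2)=k$, as claimed.
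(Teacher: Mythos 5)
Your proposal is correct and follows essentially the same route as the paper: theorem \ref{thm:Tspace} plus lemma \ref{lem:grH0} to list the tangent weights, the lex-sign analysis giving the node condition $a<c$ and the $H^0$ condition $a\notin\nu$, the translation into the rim-hook counts (1)--(2), and a Bia\l ynicki--Birula count on the coarse moduli space for the Betti numbers. The extra detail you supply on the orbifold B-B cells and the genericity of the lexicographic circle only fleshes out what the paper asserts in the paragraphs preceding the corollary.
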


These counts (1)+(2) are the mysterious numbers we placed below the 14
SRHSTx of shape $(3,2)\setminus (1)$ back at the beginning of \S \ref{ssec:Gr}.
Note that their multiplicities are $3^1 2^6 1^6 0^1$, a palindrome; this could
be predicted from the rational smoothness of $X_\lambda^\mu//_{StMap}\, \rhocek$.

One can compute larger examples rather rapidly, e.g. for
$X^{(4,4,2)}_{(2)} //_{StMap}\, \rhocek$ the even Betti numbers are
$ 1, 28, 235, 787, 1167, 787, 235, 28, 1.$

\subsection{The full flag manifold case}

Let $X_u^v \subseteq Fl(n)$ be a Richardson variety inside the
space of full flags. A $T$-fixed point $[\Sigma,\gamma]$
on $X_u^v //_{StMap}\, \rhocek$
is given by an increasing chain
$\gamma(\Sigma^\rhocek) = (u_0 = u < u_1 < u_2 < \ldots < u_m = v)$
in $S_n$ Bruhat order, where each $u_p$ equals $u_{p-1}$ times a
not-necessarily-simple reflection $r_{ij}$, $i<j$.
Call this a \defn{reflection chain}, the full flag analogue of a
standard rim-hook skew tableau.
In this situation we have $\pi(i) < \pi(j)$, and 
$\ell(u_p) - \ell(u_{p-1}) = 1 + 2\#\{a \in (i,j)\colon
\pi(a) \in (\pi(i),\pi(j)) \}$.
That accords nicely with the following calculation and
theorem \ref{thm:strata}(1).

\begin{Lemma}\label{lem:flagsH0}
  Let $C \iso \PP^1$ carry the $\rhocek$-action with trivial generic stabilizer.
  Let $i<j$ in $[n]$ and $\pi\in S_n$ s.t. $\pi(i)<\pi(j)$,
  and $\gamma\colon C\to Fl(n)$ be the $\rhocek$-equivariant map
  whose image is the $T$-fixed curve connecting the points $\pi$,
  $\pi r_{ij}$ where $r_{ij} := (i\leftrightarrow j)$.
  Lift the $T$-action on the target curve to a $\wt T$-action on $C$,
  with $\wt T$ a connected finite extension of $T$. 

  Then the $\wt T$-weight on $T_0 C$ (and negative the $\wt T$-weight on $T_\infty C$)
  is $(-y_i+y_j)/(j-i)$.
  Also, $H^0(\Sigma;\ \gamma^*TFl(n))^\rhocek$ has two $\wt T$-weights
  for each $a \in (i,j)$ such that $\pi(a) \in (\pi(i),\pi(j))$:
  $$
  \pm \left(
    -\frac{\pi(a)-\pi(i)}{\pi(j)-\pi(i)} y_{\pi(j)} + y_{\pi(a)}
    - \frac{\pi(j)-\pi(a)}{\pi(j)-\pi(i)} y_{\pi(i)}
    \right)
  $$
  There is also one $0$ weight. Each weight occurs with multiplicity $1$.
\end{Lemma}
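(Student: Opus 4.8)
The plan is to reproduce the proof of Lemma~\ref{lem:grH0} \emph{mutatis mutandis}, with one-line notations in $S_n$ playing the role of the subsets $\binom{[n]}{k}$. The first step is to find $\deg\gamma$ and the weight on $T_0C$. Right multiplication by $r_{ij}$ moves $F_\pi$ to the opposite endpoint of the $T$-invariant curve $\overline{U_\beta\cdot F_\pi}$ attached to the conjugated root $\beta = \pi(e_i-e_j) = e_{\pi(i)}-e_{\pi(j)}$; hence the $T$-weight on the tangent line to $\gamma(C)$ at $F_\pi$ is $\pm(y_{\pi(i)}-y_{\pi(j)})$, on which $\rhocek$ acts with weight $\pm(\pi(j)-\pi(i))$. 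By Proposition~\ref{prop:curvestab} this forces $\deg\gamma = \pi(j)-\pi(i)$, so on the $(\pi(j)-\pi(i))$-fold cover $C$ the $\wt T$-weight on $T_0C$ equals that root divided by $\pi(j)-\pi(i)$, as asserted.

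Next I would determine $TFl(n)|_{\gamma(C)}$ up to $K_T$-equivalence by comparing the $T$-weight sets $w_\pi$, $w_{\pi r_{ij}}$ on the tangent spaces at the two endpoints --- the exact flag analogue of the four-row display in the proof of Lemma~\ref{lem:grH0}. Indexing tangent weights by pairs of positions $p<q$ (the pair $\{p,q\}$ contributing $\pm(y_{\sigma(p)}-y_{\sigma(q)})$ at $F_\sigma$), a short case check gives: pairs with $\{p,q\}\cap\{i,j\} = \emptyset$ contribute trivial summands $\calO(0)$; for $a\notin(i,j)$ the two pairs meeting $\{i,j\}$ at position $a$ together have the same weights at both endpoints, hence again contribute $\calO(0)$'s; the pair $\{i,j\}$ contributes the tangent bundle $\calO(2)$ of $\gamma(C)$; and for each $a\in(i,j)$ the pairs $\{i,a\}$ and $\{a,j\}$ contribute, up to $K_T$-equivalence, a copy of $\calO(1)$ on $\PP(\CC_{y_{\pi(i)}}\oplus\CC_{y_{\pi(j)}})$ twisted by $y_{\pi(a)}$ and a copy twisted by $y_{\pi(i)}+y_{\pi(j)}-y_{\pi(a)}$ (both have degree $1$, so the replacement is by nonnegative bundles as Lemma~\ref{lem:Sinvariants} demands). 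This mirrors the Grassmannian groups (1)--(4) line for line, the sole structural change being that each middle position $a$ now supplies \emph{both} a ``group (2)'' and a ``group (3)'' line bundle instead of one or the other according to membership in $\lambda$.

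The last step feeds each summand into Lemma~\ref{lem:Sinvariants}. Every $\calO(0)$ carries a constant nonzero $\rhocek$-weight, so contributes nothing to the $\rhocek$-invariants; the $\calO(2)$ contributes exactly the one $0$ weight (the formula of Lemma~\ref{lem:Sinvariants} with $m=2$ collapses to $0$, as in the Grassmannian case). For the two $\calO(1)$'s attached to $a\in(i,j)$, the interval $[wt_S(\calL|_0),wt_S(\calL|_\infty)]$ is $[\pi(i)-\pi(a),\,\pi(j)-\pi(a)]$ for the $y_{\pi(a)}$-twist and its negative for the other, and contains $0$ precisely when $\pi(a)\in(\pi(i),\pi(j))$; in that case the formula of Lemma~\ref{lem:Sinvariants}, after a one-line simplification, returns $+\big(-\frac{\pi(a)-\pi(i)}{\pi(j)-\pi(i)}y_{\pi(j)} + y_{\pi(a)} - \frac{\pi(j)-\pi(a)}{\pi(j)-\pi(i)}y_{\pi(i)}\big)$ for the $y_{\pi(a)}$-twist and its negative for the $y_{\pi(i)}+y_{\pi(j)}-y_{\pi(a)}$-twist, which is exactly the claimed pair $\pm(\cdots)$. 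Since each such weight has $y_{\pi(a)}$-coefficient $\pm1$ it determines $a$ and the sign, and the $0$ weight is distinct from all of them, so every weight appears with multiplicity one. I expect the real work to sit in step two: keeping faithful track of the symmetric difference of the two weight sets and confirming the $K_T$-equivalence neither omits nor repeats a line bundle --- but this is a direct transcription of a computation already carried out for $Gr(k,n)$, so it ought to go through without incident.
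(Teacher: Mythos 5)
Your proposal is correct and follows essentially the same route as the paper's own proof: compute the degree $\pi(j)-\pi(i)$ from the $\rhocek$-weight on the tangent line at $F_\pi$, replace $TFl(n)|_{\gamma(C)}$ by a $K_T$-equivalent sum of trivial bundles, two families of twisted $\calO(1)$'s, and one copy of $T\gamma(C)\iso\calO(2)$ matched via the endpoint isotropy weights, then apply Lemma \ref{lem:Sinvariants} summand by summand. Your bookkeeping (pairing the $y_{\pi(a)}$-twist and the $(y_{\pi(i)}+y_{\pi(j)}-y_{\pi(a)})$-twist per position $a\in(i,j)$) is just a reorganization of the paper's three-column weight comparison, and the minor sign-convention wobble in your interval $[wt_S(\calL|_0),wt_S(\calL|_\infty)]$ does not affect the membership criterion or the resulting weights.
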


\begin{proof}
  The first calculation is much the same as on $Gr(k,\CC^n)$.

  We again replace the restriction of the tangent bundle
  with a sum of line bundles coming in four groups;
  the first group consists of trivial bundles with no invariant sections,
  two groups consist of degree $1$ line bundles, and the last is one copy
  of the tangent bundle to the $\PP^1$.
  
  For $\pi \in Fl(n)^T$, the $T$-weights in $T_\pi Fl(n)$ are
  $y_{\pi(a)} - y_{\pi(b)}$ for $a<b$; call this set $w_\pi$. Then
  \begin{eqnarray*}
    w_\pi \cap w_{\pi r_{ij}}
    && \text{are all nonzero} \\
    w_\pi \setminus w_{\pi r_{ij}} 
    &=&  \left\{y_{\pi( a)} - y_{\pi( j)}\colon i<a<j \right\}
        \cup \left\{y_{\pi( i)} - y_{\pi( b)}\colon i<b<j \right\}
        \cup \{y_{\pi( i)} - y_{\pi( j)} \} \\ 
    w_{\pi r_{ij}}  \setminus w_\pi
%  &=& (\pi( i)\leftrightarrow \pi(j)) \cdot (w_\pi \setminus w_{\pi r_{ij}}) \\
    &=&  \left\{y_{\pi( a)} - y_{\pi( i)}\colon i<a<j \right\}
        \cup \left\{y_{\pi( j)} - y_{\pi( b)}\colon i<b<j \right\}
        \cup \{y_{\pi( j)} - y_{\pi( i)} \} 
  \end{eqnarray*}
  The line bundle $\calO(1)$ on $\PP(\CC_{y_{\pi(j)}} \oplus \CC_{y_{\pi(i)}})$,
  twisted by $\CC_{\pi(a)}$, has the right isotropy weights to match the
  typical elements in the first column. The $\wt T$-weights of the sections
  of this line bundle, pulled back along $\gamma$, are of the form
  $$ y_{\pi(a)} + f(-y_{\pi(j)}) + (1-f)(-y_{\pi(i)})$$
  for $f\in [0,1].$   For the section to be $\rhocek$-invariant,
  we need ${\pi(a)} + f(-{\pi(j)}) + (1-f)(-{\pi(i)}) = 0$,
  hence, $f = \frac{\pi(a)-\pi(i)}{\pi(j)-\pi(i)}$.
  So we need $a \in (i,j)$ and $\pi(a) \in [\pi(i),\pi(j)]$.
  (Since $\gamma$ is of degree $\pi(j)-\pi(i)$,
  we also need $(\pi(j)-\pi(i))f \in \ZZ$, which as in the Grassmannian case
  we get for free.)
  If these all hold, we get a $\wt T$ weight of
  $$ -\frac{\pi(a)-\pi(i)}{\pi(j)-\pi(i)} y_{\pi(j)} + y_{\pi(a)}
  - \frac{\pi(j)-\pi(a)}{\pi(j)-\pi(i)} y_{\pi(i)} $$

  The same line bundle $\calO(1)$, but now twisted by $\CC_{\pi(i)+\pi(j)-\pi(b)}$,
  has the right isotropy weights to match the
  typical elements in the second column. The $\wt T$-weights of the sections
  of this line bundle, pulled back along $\gamma$, are of the form
  $$ y_{\pi(i)}+y_{\pi(j)}-y_{\pi(b)} + f(-y_{\pi(j)}) + (1-f)(-y_{\pi(i)})
  =  (1-f)y_{\pi(j)} -y_{\pi(b)} + f y_{\pi(i)}
  $$
  for $f\in [0,1]$. For the section to be $\rhocek$-invariant,
  we need  $ (1-f)\pi(j) -\pi(b) + f\pi(i) = 0$,
  hence, $f = \frac{\pi(j)-\pi(b)} {\pi(j)-\pi(i)}$.
  So we need $\pi(b) \in [\pi(i),\pi(j)]$.
  That gives a $\wt T$ weight of
  $$ \qquad\qquad\qquad\qquad\qquad
  \frac{\pi(b)-\pi(i)} {\pi(j)-\pi(i)} y_{\pi(j)}
  - y_{\pi(b)}
  + \frac{\pi(j)-\pi(b)} {\pi(j)-\pi(i)} y_{\pi(i)}
  \qquad\qquad\qquad\qquad\qquad\qedhere
  $$
\end{proof}

This gives an analogue of corollary \ref{cor:Grindices},
that is perhaps simpler:

\begin{Corollary}\label{cor:Flindices}
  Let $(u = u_0 < u_1 < \ldots < u_m = v)$ be a reflection chain in $S_n$,
  and $[\Sigma,\gamma]$ the corresponding point in $X_u^v//_{StMap}\, \rhocek$.
  Then the number of lex-positive weights in the tangent space
  $T_{[\Sigma,\gamma]} X_u^v//_{StMap}\, \rhocek$ is
  \begin{enumerate}
  \item the number of intermediate steps $(u_i r_{ab} < u_i < u_i r_{cd})$,
    $a<b,c<d$ such that $a<c$ or $[a=c$ and $b<d]$,
    plus
  \item for each step $(u_k < u_k r_{ij})$, the number of
    $\{a \in (i,j)\colon u_k(a) \in (u_k(i), u_k(j)) \}$.
  \end{enumerate}
  With these, we can compute $\dim H^{2k}(X_u^v//_{StMap}\, \rhocek;\, \QQ)$
  as the number of reflection chains from $u$ to $v$
  with exactly $k$ lex-positive weights.
\end{Corollary}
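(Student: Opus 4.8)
The plan is to rerun the Grassmannian argument preceding Corollary~\ref{cor:Grindices} almost word for word, feeding it Lemma~\ref{lem:flagsH0} in place of Lemma~\ref{lem:grH0}. First I check that Theorem~\ref{thm:Tspace} applies at the point $[\Sigma,\gamma]$ attached to a reflection chain $(u=u_0<u_1<\cdots<u_m=v)$: the variety $Fl(n)$ is convex, each $\gamma(C_k)$ is a $T$-fixed curve, hence a smooth $\PP^1$ and in particular normal, and the required $K_T$-equivalence $TFl(n)|_{\gamma(C_k)}\sim T\gamma(C_k)\oplus\Oplus_j\calL_{k,j}$ into nonnegative line bundles is exactly the ``four groups of line bundles'' produced in the proof of Lemma~\ref{lem:flagsH0}. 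So $T_{[\Sigma,\gamma]}X_u^v//_{StMap}\,\rhocek$ is, as a $\wt T$-representation, the sum of one tensor-product line $T_{n_i^-}C_{i-1}\tensor T_{n_i^+}C_i$ per node $n_i$ (equivalently per intermediate step $u_i$) together with the lines of $\Oplus_k H^0(C_k;\gamma^*TFl(n))^{\rhocek}$ other than the $\rhocek$-fixed tangent-of-$\PP^1$ summands; the task is to count how many carry a lex-positive weight.

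For the second family, Lemma~\ref{lem:flagsH0} says that the step $(u_k<u_kr_{ij})$ contributes a pair of opposite weights $\pm W_{k,a}$ for each $a\in(i,j)$ with $u_k(a)\in(u_k(i),u_k(j))$, together with one zero weight. Each $W_{k,a}$ is nonzero (the coefficient of $y_{u_k(a)}$ in it is $1$), so exactly one of $+W_{k,a}$, $-W_{k,a}$ is lex-positive; hence this step contributes precisely $\#\{a\in(i,j)\colon u_k(a)\in(u_k(i),u_k(j))\}$ lex-positive weights, and summing over $k$ gives clause~(2). This is exactly the respect in which the flag situation is cleaner than the Grassmannian one: in \S\ref{ssec:Gr} a single weight per $a$ appeared carrying a sign that depended on membership in the relevant subset, whereas here both signs occur and the bookkeeping collapses.

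For the node lines, write $u_{i-1}=u_ir_{ab}$ and $u_{i+1}=u_ir_{cd}$ with $a<b$ and $c<d$. By the first assertion of Lemma~\ref{lem:flagsH0} the two $\wt T$-weights meeting at $n_i$ are $\pm\tfrac{y_a-y_b}{b-a}$ and $\pm\tfrac{-y_c+y_d}{d-c}$, with the signs pinned down by which end of each component carries the node (Proposition~\ref{prop:sections}), exactly as in the display before Corollary~\ref{cor:Grindices}; so the tensor-product weight is $\tfrac{y_a-y_b}{b-a}+\tfrac{-y_c+y_d}{d-c}$. If $a<c$ the least-index variable appearing is $y_a$, with coefficient $\tfrac1{b-a}>0$; if $c<a$ it is $y_c$, with coefficient $-\tfrac1{d-c}<0$; and if $a=c$ the coefficient of $y_a$ is $\tfrac1{b-a}-\tfrac1{d-a}=\tfrac{d-b}{(b-a)(d-a)}$, which is nonzero because $b=d$ would force $u_{i-1}=u_{i+1}$, and has the sign of $d-b$. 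Thus the node line is lex-positive precisely when $a<c$, or $a=c$ and $b<d$, which is clause~(1); adding clauses (1) and (2) gives the claimed count of lex-positive weights at $[\Sigma,\gamma]$.

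For the Betti numbers, take the subcircle $S'\le T$ realizing the lex weighting $y_1\gg\cdots\gg y_n$; it is generic, so the $S'$-fixed locus of $X_u^v//_{StMap}\,\rhocek$ equals its $T$-fixed locus, namely the reflection chains from $u$ to $v$. The coarse moduli space is projective and rationally smooth (it is the Chow quotient of \S\ref{sec:Chow}), and since the stack is smooth its Bia\l ynicki-Birula attracting sets are, downstairs, finite quotients of affine spaces, hence rationally cells; the cell through a $T$-fixed point has complex dimension equal to the number of lex-positive tangent weights there. The resulting rational cell structure then expresses $\dim H^{2k}(X_u^v//_{StMap}\,\rhocek;\QQ)$ as the number of reflection chains from $u$ to $v$ with exactly $k$ lex-positive weights. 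I expect the only genuinely delicate points to be the sign conventions in clause~(1) --- in particular correctly isolating the new case $a=c$, which cannot occur for Grassmannians --- and the routine but slightly fussy check that this orbifold/coarse-space Bia\l ynicki-Birula decomposition computes rational cohomology; everything else transcribes \S\ref{ssec:Gr}.
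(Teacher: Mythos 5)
Your proposal is correct and follows essentially the same route as the paper: apply Theorem \ref{thm:Tspace} with the weights from Lemma \ref{lem:flagsH0}, observe that each $\pm$ pair in the $H^0$ summands contributes exactly one lex-positive weight (clause (2)), handle the node weights $\frac{y_a-y_b}{b-a}+\frac{-y_c+y_d}{d-c}$ with the new $a=c$ case decided by $b<d$ (clause (1)), and read off Betti numbers from the Bia\l ynicki-Birula cell dimensions on the (rationally smooth, projective) coarse space. Your write-up is if anything slightly more explicit than the paper's proof, e.g. in verifying the hypotheses of Theorem \ref{thm:Tspace} and in the nonvanishing check $b\neq d$ when $a=c$.
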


\begin{proof}
  The $a<c$ case is exactly as in the Grassmannian case.
  The possibility $a=c$ is new to the flag case, and has us consider the sum
  $$
  \frac{y_a - y_b}{b-a} +
  \frac{- y_a + y_d}{d-a}
  $$
  which is lex-positive exactly if the first summand dominates.
  That is when $b-a < d-a$. 

  The second term is the number of lex-positive weights in the
  displayed formula in lemma \ref{lem:flagsH0}, which is obviously
  exactly half of them.
\end{proof}

It is again straightforward to compute these, a small example being
$X_{32145}^{54132} //_{StMap}\, \rhocek$
with even Betti numbers ${1, 14, 32, 14, 1}$.
%${1, 23, 58, 23, 1}$ of $X_{12345}^{35124} //_{StMap}\, \rhocek$.

\junk{
\subsection{Incompatibility with projection}
\label{ssec:incompatibility}

Recall from \S\ref{ssec:maps} that an $S$-equivariant map $f\colon X\to Y$
induces a function $f//_{StMap}\, S$ between their stable map quotients,
but that we didn't check whether this map is algebraic.
It is easy to see that if $f$ is onto, so too is this function
$f //_{StMap}\, S$.
In the case that $f$ is a submersion (e.g. $Fl(n) \onto Gr(k,\CC^n)$), 
we might hope $f //_{StMap}\,S$ to be a submersion too.
We refute that now.

Let $[\Sigma,\gamma] \in X//_{StMap}\, S$ map to
$[\Sigma',\gamma'] \in Y//_{StMap}\, S$, so $\Sigma'$ is $\Sigma$ with
some components collapsed. If $f //_{StMap}\,S$ were a submersion,
then the $\wt T$-weights in $T_{[\Sigma',\gamma']} Y//_{StMap}\, S$
would be a subset of those in $T_{[\Sigma,\gamma]} X//_{StMap}\, S$.

\newcommand\BBgraph[1]{
  \begin{tikzpicture}[scale=0.25]
    \tikzstyle{every node}=[font=\tiny]
    \node[draw] (123) at (-7,-1) {123};  --    5 -1 -4
    \node[draw] (132) at (-5,-4) {132};  --    5 -4 -1
    \node[draw] (213) at (-3, 5) {213};  --   -1  5 -4
    \node[draw] (312) at ( 3,-4) {312};  --   -4  5 -1
    \node[draw] (231) at ( 1, 5) {231};  --   -1 -4  5
    \node[draw] (321) at ( 5,-1) {321};  --   -4 -1  5
    
    \draw[line width=.5pt,gray] (123) -- (213);
    \draw[line width=.5pt,gray] (123) -- (132);
    \draw[line width=.5pt,gray] (123) -- (321);
    \draw[line width=.5pt,gray] (213) -- (231);
    \draw[line width=.5pt,gray] (213) -- (312);
    \draw[line width=.5pt,gray] (132) -- (231);
    \draw[line width=.5pt,gray] (132) -- (312);
    \draw[line width=.5pt,gray] (312) -- (321);
    \draw[line width=.5pt,gray] (231) -- (321);
    
    \draw[line width=3pt,black] #1;
  \end{tikzpicture}
}
\newcommand\BBGrgraph[1]{
  \begin{tikzpicture}[scale=0.25]
    \tikzstyle{every node}=[font=\tiny]
    \node[draw] (1) at (-9,-4) {1};  --    5 -1 -4
    \node[draw] (2) at (-3, 5) {2};  --   -1  5 -4
    \node[draw] (3) at ( 3,-4) {3};  --   -4  5 -1
    
    \draw[line width=.5pt,gray] (1) -- (2);
    \draw[line width=.5pt,gray] (1) -- (3);
    \draw[line width=.5pt,gray] (3) -- (2);
    
    \draw[line width=3pt,black] #1;
  \end{tikzpicture}
}

{\em Example.} Let $\pi\colon Fl(3) \onto Gr(1,\CC^3)$ be the projection,
and $[C_1\cup C_2\cup C_3,\gamma]$ correspond to the reflection chain
$123 \xrightarrow{12} 213 \xrightarrow{23} 231 \xrightarrow{12} 321$,
which projects to $1 \to 2 \to 3$ while collapsing $C_2$.

$$ \BBgraph{(123) -- (213) -- (231) -- (321)}
\qquad \raisebox{.5in}{$\mapsto$} \qquad
\BBGrgraph{(1) -- (2) -- (3)} $$

By theorem \ref{thm:Tspace}, the two weights of
$T_{[C_1\cup C_2\cup C_3,\gamma]} \left( Fl(3)//_{StMap}\, \rhocek\right)$
come from summing the tangent weights at the nodes:
$\swarrow\! +\! \rightarrow$ and $\leftarrow\! + \!\searrow$.
The same calculation for the one weight of
$T_{[\Sigma',\gamma']} \left( Gr(1,\CC^3)//_{StMap}\, \rhocek \right)$
gives $\swarrow\! + \!\searrow$.

While this example shows that the map $\pi//_{StMap}\, \rhocek$
can't be a submersion, the weights here suggest it might
look like $(x,y) \mapsto xy$ in local co\"ordinates.
} % end junk

\junk{
\subsection*{Notes: the general $G/P$ case}

Let $w < w r_\beta$ and consider the $T$-invariant $\PP^1 \subseteq G/P$
connecting $w P/P$, $w r_\beta P/P$. Or for convenience, take $w=1$.
Then we get a curve $\Sigma$ with
$\Sigma^T = \{e \bullet\!\! -\!\!\bullet r_\beta\}$ whose normal
bundle we calculated in the type $A$ case. That should be easily generalized.

Consider the boolean poset $2^{\Delta_1}$ of parabolics $P$, and project
$\pi_P\colon G/B \onto G/P$, giving maps $N_\Sigma G/B \onto N_{\pi(\Sigma)} G/P$.
Since the roots are all different, we should be able to filter
set-theoretically, not just linearly. So there should be a way to
take the 
}

\subsection{A Deodhar-style decomposition}

The Bia\l ynicki-Birula decomposition of $X_u^v//_{StMap}\, \rhocek$
into orbi-cells induces a decomposition of
$(X_u^\circ \cap X^v_\circ)/\rhocek$ closely analogous to 
Deodhar's ``finer decomposition of Bruhat cells'' \cite{Deodhar}
(although that one depends on a choice of reduced word for $v$).

We recall some basics from e.g. \cite[\S 4.1]{Speyer}.
The Deodhar decomposition arises from mixing two cellular decompositions
of the closed Bott-Samelson manifold $BS^Q$: one is into
sub-Bott-Samelsons $\coprod_{R\subseteq Q} BS^R_\circ$,
the other being the Bia\l ynicki-Birula decomposition
$\coprod_{R\subseteq Q} BB(BS^Q)^R_\circ$.
Each decomposition is $B$-invariant, and each cell in either decomposition
contains a unique element of $(BS^Q)^T \iso 2^Q$, but the
sub-Bott-Samelson decomposition arises from an sncd whereas the
BB cells have singular closure and indeed the decomposition
is not even necessarily a stratification. It is obvious a priori that there
are the same number of $k$-cells (specifically, $\#Q \choose k$)
in the two decompositions, because that number can be inferred from
the Betti numbers of $BS^Q$.

The Deodhar decomposition of $BS^Q_\circ$ is then defined by
$$ BS^Q_\circ = \coprod_{R\subseteq Q} BS^Q_\circ \cap BB(BS^Q)^R_\circ $$
(The paper \cite{Deodhar} only addresses this in the case $Q$ reduced,
in which case $BS^Q_\circ$ is isomorphic to a $B$-orbit $X^{\prod Q}_\circ$
in $G/B$.) One of Deodhar's main results is that each of these intersections
is isomorphic to a product of simple factors. If we define
$w_i = \prod_{j\leq i,\, j\in R} r_{Q_j}$ as the partial product of
the simple reflections in $R$, then
$$ BS^Q_\circ \cap BB(BS^Q)^R_\circ \ \iso\
\prod_{i=1}^{\#Q}
\begin{cases}
\begin{tabular}{cc|c|c|}
  & \multicolumn{1}{c}{} & \multicolumn{1}{c}{$i\in R$}  & \multicolumn{1}{c}{$i\notin R$}   \\  \cline{3-4}
  & $w_{i-1}r_{Q_i} > w_{i-1}$ & $pt$ & $\Gm$  \\ \cline{3-4}
  & $w_{i-1}r_{Q_i} < w_{i-1}$ & $\AA^1$ & $\emptyset$  \\ \cline{3-4}
\end{tabular} \\ \\
\end{cases}$$
%  pt &  \text{if $i\in R$ and $w_{i-1} r_{Q_i} > w_{i-1}$} \\
%  \AA^1 & \text{if $i\in R$ and $w_{i-1} r_{Q_i} < w_{i-1}$} \\
%  \Gm & \text{if $i\notin R$ and $w_{i-1} r_{Q_i} > w_{i-1}$} \\
%  \emptyset & \text{if $i\notin R$ and $w_{i-1} r_{Q_i} < w_{i-1}$} 
In particular, one may omit from the decomposition those $R\subseteq Q$
in which one ever meets the $\emptyset$ case. Deodhar calls the $R$ that
avoid this case \defn{distinguished subexpressions}.

One way to derive the notion of ``indistinguished'' is to figure out which
B-B strata $BB(BS^Q)^R_\circ$ are fully contained in some Bott-Samelson divisor
$BS^{Q\setminus q}$ and thus avoid the dense stratum $BS^Q_\circ$.
For $BS^{Q\setminus q}$ to contain $BB(BS^Q)^R_\circ$, it must first contain
the $T$-fixed point, so $q \notin R$. Then, the number of
$\rhocek$-positive weights in $T_R BS^Q$ must match the number of
$\rhocek$-positive weights in $T_R BS^{Q\setminus q}$.
This weight calculation forces $q\in Q$ to be of the $\emptyset$ type
in the product above. Reversing the logic, if there are any such
positions, then there exists a properly contained
$BS^S \subseteq BS^Q$ with $BS^S \supseteq BB(BS^Q)^R_\circ$, hence
that B-B stratum misses $BS^Q_\circ$.

We now perform the same weight calculation for
$M := (X_u^\circ \cap X^v_\circ)/\rhocek$.
Given a reflection chain $\overline w = (u = w_0 < w_1 < \ldots < w_m = v)$,
we have the B-B stratum $M^{\overline w}_\circ \subseteq M$,
whose dimension we computed in corollary \ref{cor:Flindices}.
The divisor strata $D_y \subset M$ are indexed by the open Bruhat
interval $(u,v)$, where $D_y$ contains the point corresponding to
$\overline w$ exactly if $y = w_i$ for some $i \in [1,m)$.
Using theorem \ref{thm:strata} to apply corollary \ref{cor:Flindices} to $D_y$,
we find $\dim (D_y)^{\overline w}_\circ = \dim M^{\overline M}_\circ$ as long
as $w_i$ is a right turn in the sense of corollary \ref{cor:Flindices}(1).

In \cite[\S5.3]{BjBr} are given two formul\ae\ for computing the number
of $\FF_q$-points in an open Richardson variety (the $R$-polynomial):
one is Deodhar's sum over distinguished subexpressions (theorem 5.3.7),
the other is Dyer's sum over certain reflection chains (theorem 5.3.4).
We expect that the decomposition of $(X_u^\circ \cap X^v_\circ)/\rhocek$
induced from the B-B decomposition of $X_u^v//_{StMap}\, \rhocek$
should give a geometric derivation of the latter formula,
and intend to pursue this elsewhere.

\section{Exploiting the $2$-GKMness of Grassmannians}
\label{sec:2GKM}

Let us say that a $T$-variety $M$ is \defn{$d$-GKM} if it has finitely
many $T$-fixed subvarieties of dimension $\leq d$.
(We neglect the very important ``$T$-equivariantly formal'' condition.)
So $0$-GKM means $M^T$ is discrete, and $1$-GKM is the usual
Goresky-Kottwitz-MacPherson condition.  Grassmannians are $2$-GKM,
but most $G/P$ are not (e.g. $GL_3/B$ isn't).  Toric varieties are $d$-GKM for
all $d$. It is easy to see that if $M$ is $d$-GKM, then
$StMap_\beta(\PP^1,M)$ is $(d-1)$-GKM.
The $d$-GKM property is easily seen to be equivalent to the following:
at every $T$-fixed point, each $(d+1)$-tuple of isotropy weights
should be linearly independent. (This ``three-independence'' for
Grassmannians was observed in \cite{GZ}; they observe that the blowup
of a $2$-GKM space at a fixed point is $1$-GKM.)

Using this characterization, it is not hard to show that $2$-GKM flag
manifolds must be cominuscule. However the converse fails:
$SO(5)/P_{short}$ is cominuscule but is not $2$-GKM. (Meanwhile
$SO(5)/P_{long}$ is minuscule and not $2$-GKM.) The simply-laced
cominuscule (and hence minuscule) flag manifolds are indeed $2$-GKM.

The basic thing to compute about a ($1$-)GKM space $M$ is its \defn{GKM graph},
whose vertices and edges are the $T$-fixed points and curves in $M$.
For each such curve $C\iso \PP^1$, we typically want to know the
isotropy weights $wt(T_0 C)$, $wt(T_\infty C)$. This is slightly subtler
in our orbifold context insofar as these points $0,\infty$ may be stackier
than $C$ is generically.

\subsection{Weights on the GKM graph of
  $X_\lambda^\mu//_{StMap}\, \rhocek$}\label{ssec:Grweights}

We do this now for $M = X_\lambda^\mu//_{StMap}\, \rhocek$ the stable map
quotient of a Grassmannian Richardson variety.
The edges correspond to the $T$-fixed curves $C$ in $M$,
which in turn correspond to the $T$-fixed surfaces in $X_\lambda^\mu$.
There are three kinds of such surfaces, and in each case we use
lemma \ref{lem:grH0} to compute the $\wt T$-isotropy weights:
\begin{enumerate}
\item Fix $i,a,j \in [n]$ distinct, and $S \in {[n] \choose k-1}$.
  Then there is a $\PP^2$ worth of $k$-planes $\CC^S \oplus L$
  where $\CC^S$ is the co\"ordinate subspace, and $L \in Gr(1,\CC^{iaj})$.
  The $T$-fixed points in $C \iso Gr(1,\CC^{iaj})//_{StMap}\, \rhocek$
  correspond to the irreducible curve $C_0 := \{ L \leq \CC^{ij} \}$
  and the union $C_\infty := \{ L \leq \CC^{ia} \} \cup \{ L \leq \CC^{aj} \}$.
  Then the $\wt T$-weight of $T_{C_0} C$ is
  $$   - \frac{j-a}{j-i} y_i + y_a - \frac{a-i}{j-i} y_j $$
\item Fix $i,a,j \in [n]$ distinct, and $S \in {[n] \choose k-2}$.
  Then there is a $\PP^2$ worth of $k$-planes $\CC^S \oplus P$
  with $P \in Gr(2,\CC^{iaj})$. 
  The $T$-fixed points in $C \iso Gr(2,\CC^{iaj})//_{StMap}\, \rhocek$
  correspond to the irreducible curve $C_0 := \{ P \geq \CC^{a} \}$
  and the union $C_\infty := \{ P \geq \CC^{i} \} \cup \{ P \geq \CC^{j} \}$.
  Then the $\wt T$-weight of $T_{C_0} C$ is negative that above,
  $$    \frac{j-a}{j-i} y_i - y_a + \frac{a-i}{j-i} y_j $$
\item Fix $a,b,c,d \in [n]$ distinct, and $S \in {[n] \choose k-2}$.
  Then there is a $\PP^1\times\PP^1$ worth of $k$-planes of the form
  $\CC^S \oplus L\oplus M$, where $L \in Gr(1,\CC^{ab}), M \in Gr(1,\CC^{cd})$.
  The $T$-fixed points in
  $C \iso (Gr(1,\CC^{ab})\times Gr(1,\CC^{cd}))//_{StMap}\, \rhocek$
  correspond to the union
  $C_0     := \{ \CC^a \oplus M \} \cup \{ L \oplus \CC^d \}$ and the union
  $C_\infty := \{ L \oplus \CC^c \} \cup \{ \CC^b \oplus M \}$.
  Then the $\wt T$-weight of $T_{C_0} C$ is
  $$     \frac{-y_c + y_d}{d-c} +     \frac{ y_a - y_b}{b-a}     $$
\end{enumerate}

In \S\ref{ssec:Gr} we described the GKM graph vertex set (the $T$-fixed points)
using SRHSTx of shape $\mu\setminus \lambda$. We describe in those
terms now the edges in the GKM graph coming from the above three kinds
of surfaces.  We omit the details of the derivations of these from the above.

\begin{enumerate}
\item Some rim-hook can be sliced along a horizontal edge, or,
  two rim-hooks that abut along one horizontal edge can be
  joined into one rim-hook.
\item Some rim-hook can be sliced along a vertical edge, or,
  two rim-hooks that abut along one vertical edge can be
  joined into one rim-hook.
\item Two rim-hook tableaux agree away from rim-hooks $i,i+1$, but those
  two cannot be joined, because they abut along no edge or along multiple edges.
\end{enumerate}

\section{Some line bundles?}\label{sec:linebundles}

Given a $T$-equivariant line bundle $\calL$ on any space $M$,
the restriction of $c_1(\calL)$ along the map $M^T \into M$
defines a ``moment map''\footnote{Indeed, if one leaves complex {\em algebraic}
  geometry and further endows $\calL$ with a Hermitian connection,
  one can define a more traditional
  moment map $\Phi\colon M \to \RR \tensor_\ZZ T^*$.  We won't make use of this.}
$\Phi\colon M^T \to T^*$, $f \mapsto wt_T(\calL|_f)$. 
This $\Phi$ automatically satisfies the ``GKM conditions''
$wt(T_0 C)\ |\ \left( \Phi(C_\infty) - \Phi(C_0) \right)$,
for each $T$-fixed curve $C \subseteq M$, $C\iso \PP^1$.
For $M$ smooth and proper over $\CC$, Kodaira embedding says
that the line bundle is ample if
$\Phi(C_\infty) - \Phi(C_0)$ is a {\em positive}
multiple of $wt(T_0 C)$ for each $C$.

We were not able to scare up references for some foundational orbifold results
that we expect to be true, concerning the restriction map $K_T(M) \to K_T(M^T)$
in the case $M$ is an orbifold and, say, proper. In the present section,
it would be useful to know that the image of this map is defined by the GKM
conditions (the orbifold analogue of \cite{RosuK,VV}).
In the next section, it would be useful to know that the kernel of this map
is trivial (the closest we could find being \cite[Corollary E]{KR}).

\subsection{An ample line bundle? in the Grassmannian case}

\newcommand\btau{{\bar\tau}}
\begin{Proposition}\label{prop:Phi}
  Fix $\lambda,\mu \in Gr(k,\CC^n)^T$, and index the $T$-fixed points
  in $X_\lambda^\mu//_{StMap} \, \rhocek$ by SRHSTx.
  \junk{
    For $\tau \in (X_\lambda^\mu//_{StMap} \, \rhocek)^T$ thought of as a
    chain $(\nu_j)$ in $[n] \choose k$ from $\lambda$ to $\mu$,
    associate $\beta_j := y_a-y_b$ to the $j$th step where
    $\nu_j = \nu_{j-1}\setminus \{b\} \cup \{a\}$. In rim-hook terms,
    $a,b$ are the diagonals of the SW and NE ends of rim-hook $j$.
    Let $\btau\colon \mu\setminus\lambda \to \NN$ be the associated function
    taking a box in $\mu\setminus\lambda$ to its tableau label.
  }

  Define $\Phi\colon (X_\lambda^\mu//_{StMap}\, \rhocek)^T \to {\wt T}^*$
  as follows. First, standardize the rim-hook tableau $\tau$ by arbitrarily
  breaking ties, then relabel the now totally-ordered cells by
  $1\ldots \#(\mu\setminus\lambda)$. Now fix the arbitrariness by
  relabeling each rim-hook by
  the average of the numbers in the rim-hook. Let $\Psi(\tau)$ denote
  this rational-valued tableau.

  Finally, let $\Phi(\tau)$ be the sum
  $ \sum_{i=1}^{n-1} \big(\text{total of the labels in diagonal $i$ in }
    \Psi(\tau)\big)
  \, (y_i-y_{i+1})$.

  Then $ \Phi(C_\infty) - \Phi(C_0)$ is a positive multiple of $wt(T_0 C)$
  for each $T$-fixed curve $C$, and in particular satisfies the GKM conditions.
\end{Proposition}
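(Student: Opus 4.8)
The plan is to reduce the claim to a closed formula for $\Phi$ and then to verify the divisibility — indeed the positivity — directly for each of the three families of $T$-fixed curves enumerated in \S\ref{ssec:Grweights}. Here is the closed form. Regard a $T$-fixed point $\tau$ of $X_\lambda^\mu//_{StMap}\,\rhocek$ as its chain $\lambda = \nu_0 < \nu_1 < \cdots < \nu_{m+1} = \mu$, where the $p$th step $\nu_{p-1}\to\nu_p$ is a transposition that deletes some index $j_p$ and adjoins a smaller index $i_p < j_p$; then the $p$th rim-hook $H_p$ of $\tau$ has $|H_p| = j_p - i_p$ cells, one on each diagonal of the interval $[i_p, j_p-1]$ (normalizing the diagonals of the $k\times(n-k)$ box to run $1,\dots,n-1$). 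Standardization breaks ties only within rim-hooks, so the cells of $H_p$ receive the integers $N_{<p}+1,\dots,N_{<p}+|H_p|$ with $N_{<p} := \sum_{q<p}|H_q|$, and after averaging $\Psi(\tau)$ is constant on $H_p$ with value $\bar\ell_p := N_{<p} + \tfrac{|H_p|+1}{2}$; in particular $\Psi$, hence $\Phi$, does not depend on the tie-breaking. Summing the diagonal contributions and telescoping $\sum_{i=i_p}^{j_p-1}(y_i-y_{i+1}) = y_{i_p}-y_{j_p}$ yields
$$ \Phi(\tau)\ =\ \sum_{p=1}^{m+1}\bar\ell_p\,(y_{i_p}-y_{j_p}). $$

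Next I would unwind, from \S\ref{ssec:Grweights}, the effect of each family of $T$-fixed curves $C$ on the chain. For families (1) and (2) the chain of the endpoint $C_0$ has a single rim-hook with jump $(i,j)$ in some position $p$, while at $C_\infty$ this rim-hook is replaced by two rim-hooks in positions $p, p+1$ with jumps $(i,a)$ and $(a,j)$ — in the order $(i,a),(a,j)$ for the horizontal slice (family (1)) and in the order $(a,j),(i,a)$ for the vertical slice (family (2)). For family (3) the chains of $C_0$ and $C_\infty$ coincide except that rim-hooks $p, p+1$ carry the jumps $(a,b)$ and $(c,d)$ in one order at $C_0$ and in the opposite order at $C_\infty$. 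The decisive structural point, visible from the closed form, is that in each case nothing before position $p$ changes, and the total size of the affected block — $j-i$ in families (1)--(2), $(b-a)+(d-c)$ in family (3) — is the same at both endpoints; hence $\bar\ell_q$ is unchanged for $q<p$ and for every $q$ past the block, so only the positions inside the block contribute to $\Phi(C_\infty)-\Phi(C_0)$, and the common offset $N := N_{<p}$ cancels out of that difference.

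It then remains to substitute the relevant $\bar\ell$'s and simplify. For families (1) and (2), expanding the difference in the basis $\{y_i-y_a,\ y_a-y_j\}$ and using $|H_p| = j-i$, one finds that the $N$'s drop out and in both cases $\Phi(C_\infty)-\Phi(C_0) = \tfrac{j-i}{2}\,wt(T_{C_0}C)$; for family (3) one computes directly $\Phi(C_\infty)-\Phi(C_0) = (d-c)(y_a-y_b) - (b-a)(y_c-y_d) = (b-a)(d-c)\,wt(T_{C_0}C)$, using $wt(T_{C_0}C) = \tfrac{-y_c+y_d}{d-c} + \tfrac{y_a-y_b}{b-a}$. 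Since $j-i > 0$ and $(b-a)(d-c) > 0$ in these configurations, in every case $\Phi(C_\infty)-\Phi(C_0)$ is a positive rational multiple of $wt(T_{C_0}C)$, which gives both the asserted positivity and, a fortiori, the GKM divisibility.

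The one genuinely non-mechanical input is the combinatorial dictionary used in the second paragraph: that the three families of $T$-fixed surfaces in $X_\lambda^\mu$ produce exactly the displayed rim-hook moves, that the two affected rim-hooks are always consecutive (so that merging or swapping them stays within SRHSTx), and that in family (3) the two orderings of $(a,b),(c,d)$ exhaust the SRHSTx agreeing outside that pair. This is the bookkeeping that \S\ref{ssec:Grweights} chose to omit; carrying it out — tracking the Bruhat order on the fixed points of each surface and which diagonals carry the two ends of each rim-hook — is where the real work lies. Granting it, the closed-form derivation and the arithmetic above are routine.
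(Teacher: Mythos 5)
Your proposal is correct and follows essentially the same route as the paper's own proof: both take the \S\ref{ssec:Grweights} dictionary of rim-hook moves (cut/join and swap of consecutive rim-hooks) as given and then verify, case by case, that the change in the averaged-label diagonal sums is $\frac{j-i}{2}$ resp.\ $(b-a)(d-c)$ times the listed tangent weight. Your only deviation is packaging the computation as the telescoped closed form $\Phi(\tau)=\sum_p \bar\ell_p\,(y_{i_p}-y_{j_p})$ before differencing, which is a cosmetic reorganization of the paper's diagonal-by-diagonal bookkeeping, and your explicit caveat about the unproven combinatorial dictionary matches an omission the paper itself makes.
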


\junk{
For example if $\tau =  \raisebox{-.5cm}{\begin{tikzpicture}[scale=0.5]
\draw (0,1) -- (0,3) -- (1,3) -- (1,4) -- (2,4) -- (3,4);
\path (0.5,2.5) node {1} ;
\draw (0,2) -- (1,2) -- (1,3) -- (1,4) -- (2,4) -- (3,4);
\path (1.5,3.5) node {2} ;
\draw (0,2) -- (1,2) -- (2,2) -- (2,3) -- (2,4) -- (3,4);
\path (2.5,3.5) node {3} ;
\draw (0,2) -- (1,2) -- (2,2) -- (2,3) -- (3,3) -- (3,4);
\end{tikzpicture}}$ in $Gr(3,\CC^6)$,
we break ties giving
$ \raisebox{-.5cm}{\begin{tikzpicture}[scale=0.5]
\draw (0,1) -- (0,3) -- (1,3) -- (1,4) -- (2,4) -- (3,4);
\path (0.5,2.5) node {1} ;
\draw (0,2) -- (1,2) -- (1,3) -- (1,4) -- (2,4) -- (3,4);
\path (1.5,3.5) node {2'} ;
\path (1.5,2.5) node {2} ;
\draw (0,2) -- (1,2) -- (2,2) -- (2,3) -- (2,4) -- (3,4);
\path (2.5,3.5) node {3} ;
\draw (0,2) -- (1,2) -- (2,2) -- (2,3) -- (3,3) -- (3,4);
\end{tikzpicture}}$ (say),
standardize to
$ \raisebox{-.5cm}{\begin{tikzpicture}[scale=0.5]
\draw (0,1) -- (0,3) -- (1,3) -- (1,4) -- (2,4) -- (3,4);
\path (0.5,2.5) node {1} ;
\draw (0,2) -- (1,2) -- (1,3) -- (1,4) -- (2,4) -- (3,4);
\path (1.5,3.5) node {3} ;
\path (1.5,2.5) node {2} ;
\draw (0,2) -- (1,2) -- (2,2) -- (2,3) -- (2,4) -- (3,4);
\path (2.5,3.5) node {4} ;
\draw (0,2) -- (1,2) -- (2,2) -- (2,3) -- (3,3) -- (3,4);
\end{tikzpicture}}$,
average within each rim-hook giving
$ \raisebox{-.5cm}{\begin{tikzpicture}[scale=0.5]
\draw (0,1) -- (0,3) -- (1,3) -- (1,4) -- (2,4) -- (3,4);
\path (0.5,2.5) node {1} ;
\draw (0,2) -- (1,2) -- (1,3) -- (1,4) -- (2,4) -- (3,4);
\path (1.5,3.5) node {$2\frac 1 2$} ;
\path (1.5,2.5) node {$2\frac 1 2$} ;
\draw (0,2) -- (1,2) -- (2,2) -- (2,3) -- (2,4) -- (3,4);
\path (2.5,3.5) node {4} ;
\draw (0,2) -- (1,2) -- (2,2) -- (2,3) -- (3,3) -- (3,4);
\end{tikzpicture}}$
then sum along the $6-1$ diagonals to give
$0(y_1-y_2) + 1(y_2-y_3) + 2\frac{1}{2}((y_3-y_4)+(y_4-y_5)) + 4(y_5-y_6)$.
} % end junk

For example in $Gr(3,\CC^6)$,
$$ \tau =  \raisebox{-.5cm}{\begin{tikzpicture}[scale=0.5]
\draw (0,1) -- (0,3) -- (1,3) -- (1,4) -- (2,4) -- (3,4);
\path (0.5,2.5) node {1} ;
\draw (0,2) -- (1,2) -- (1,3) -- (1,4) -- (2,4) -- (3,4);
\path (1.5,3.5) node {2} ;
\draw (0,2) -- (1,2) -- (2,2) -- (2,3) -- (2,4) -- (3,4);
\path (2.5,3.5) node {3} ;
\draw (0,2) -- (1,2) -- (2,2) -- (2,3) -- (3,3) -- (3,4);
\end{tikzpicture}}
\ \xmapsto{\text{break ties}}\
 \raisebox{-.5cm}{\begin{tikzpicture}[scale=0.5]
\draw (0,1) -- (0,3) -- (1,3) -- (1,4) -- (2,4) -- (3,4);
\path (0.5,2.5) node {1} ;
\draw (0,2) -- (1,2) -- (1,3) -- (1,4) -- (2,4) -- (3,4);
\path (1.5,3.5) node {2'} ;
\path (1.5,2.5) node {2} ;
\draw (0,2) -- (1,2) -- (2,2) -- (2,3) -- (2,4) -- (3,4);
\path (2.5,3.5) node {3} ;
\draw (0,2) -- (1,2) -- (2,2) -- (2,3) -- (3,3) -- (3,4);
\end{tikzpicture}}
\ \xmapsto{\text{standardize}}\
\raisebox{-.5cm}{\begin{tikzpicture}[scale=0.5]
\draw (0,1) -- (0,3) -- (1,3) -- (1,4) -- (2,4) -- (3,4);
\path (0.5,2.5) node {1} ;
\draw (0,2) -- (1,2) -- (1,3) -- (1,4) -- (2,4) -- (3,4);
\path (1.5,3.5) node {3} ;
\path (1.5,2.5) node {2} ;
\draw (0,2) -- (1,2) -- (2,2) -- (2,3) -- (2,4) -- (3,4);
\path (2.5,3.5) node {4} ;
\draw (0,2) -- (1,2) -- (2,2) -- (2,3) -- (3,3) -- (3,4);
\end{tikzpicture}}
\ \xmapsto{\text{average within each rim-hook}} \
\raisebox{-.5cm}{\begin{tikzpicture}[scale=0.5]
\draw (0,1) -- (0,3) -- (1,3) -- (1,4) -- (2,4) -- (3,4);
\path (0.5,2.5) node {1} ;
\draw (0,2) -- (1,2) -- (1,3) -- (1,4) -- (2,4) -- (3,4);
\path (1.5,3.5) node {\tiny{$2\frac 1 2$}} ;
\path (1.5,2.5) node {\tiny{$2\frac 1 2$}} ;
\draw (0,2) -- (1,2) -- (2,2) -- (2,3) -- (2,4) -- (3,4);
\path (2.5,3.5) node {4} ;
\draw (0,2) -- (1,2) -- (2,2) -- (2,3) -- (3,3) -- (3,4);
\end{tikzpicture}}
$$
summed along diagonals gives
$0(y_1-y_2) + 1(y_2-y_3) + 2\frac{1}{2}((y_3-y_4)+(y_4-y_5)) + 4(y_5-y_6)$.

If the orbifold analogue of \cite{RosuK,VV} holds, then $\Phi$ is
associated to an equivariant line bundle. Until such time as this
result is in the literature, we claim only the combinatorial statement.

\begin{proof}
  We have three kinds of $T$-fixed curves $C$ to check, each connecting
  two $T$-fixed points corresponding to SRHSTx $\tau,\tau'$.
  We refer to \S \ref{ssec:Grweights} for the weights $wt(T_0 C)$.
%  Let $H_i$, $H'_i$ denote the set of boxes in rim-hook $i$ in $\tau$,
%  $\tau'$ respectively.

  \begin{enumerate}
  \item Let $\tau'$ be $\tau$ with rim-hook $H$ cut along a horizontal edge,
    into lower and upper rim-hooks $H_1, H_2$. Then $\Psi(\tau)$
    can be derived from $\Psi(\tau')$ by averaging $H_1,H_2$ together.
    In particular, $\Psi(\tau')-\Psi(\tau)$ is zero outside $H$,
    constant on $H_1$, $H_2$, and has total $0$. If $H_1$ has boxes in
    diagonals $[i,a-1]$ and $H_2$ in boxes $[a,j-1]$, and before averaging
    the smallest value in $H_1$ is $m+1$, then the averages in
    $H_1$, $H_2$, $H_1 \cup H_2$ are $m + \frac{a-i}{2}$,
    $m + a-i + \frac{j-a}{2}$, $m + \frac{j-i}{2}$ respectively.
    Hence the average in $H_1\cup H_2$, minus the averages in $H_1$ or $H_2$,
    are $\frac{j-a}{2}$, $\frac{i-a}{2}$ respectively. From here we compute
    $\Phi(\tau')-\Phi(\tau) = \frac{1}{2}((j-a)y_i - (j-i)y_a + (a-i)y_j)$,
    which is $\frac{j-i}{2}$ times the weight from (1)
    in \S\ref{ssec:Grweights}.
  \item Cutting along a vertical edge gives nearly the same calculation,
    resulting in $\frac{j-i}{2}$ times the weight from (2)
    in \S\ref{ssec:Grweights}.
  \item Let $H_1,H_2$ be two rim-hooks in $\tau$ with adjacent values $i,i+1$,
    where $H_1$ meets diagonals $[a,b-1]$ and $H_2$ meets diagonals $[c,d-1]$.
    If $m+1$ is the smallest value in $H_1$ before averaging, then the
    averages in $H_1,H_2$ are $m + \frac{b-a}2$, $m+b-a+\frac{d-c}2$
    respectively. After swapping $i \leftrightarrow i+1$ (except in
    the diagonals containing both) those averages in $H_2,H_1$ are now
    $m + \frac{d-c}2$, $m+d-c+\frac{b-a}2$ respectively. Hence the
    change in the diagonal sum is $d-c$ along $[a,b-1] \setminus [c,d-1]$,
    $a-b$ along $[c,d-1] \setminus [a,b-1]$. 
    From here we compute
    $\Phi(\tau')-\Phi(\tau) = (b-a)(-y_c+y_d) + (d-c)(y_a-y_b)$,
    which is $(d-c)(b-a)$ times the weight from (3) in \S\ref{ssec:Grweights}.
    \qedhere
  \end{enumerate}
  
\end{proof}

% \subsection{The anticanonical bundle, in the Grassmannian case}

\subsection{An ample line bundle? for the flag manifold case}

Fix a Richardson variety $X_u^v \subseteq Fl(n)$.
Let $\pi_k\colon Fl(n) \to Gr(k,\CC^n)$ be the projection,
and define $\lambda,\mu \in Gr(k,\CC^n)^T$ by $\pi_k(X_u^v) = X_\lambda^\mu$. 
Write $\pi'_k$ for the induced map $\pi_k//_{StMap}\, \rhocek$
on stable map quotients (see \S\ref{ssec:maps}). We get a square
$$
\begin{matrix}
  (X_u^v//_{StMap}\, \rhocek)^T &\into& X_u^v//_{StMap}\, \rhocek \\
  \downarrow\pi'_k &&   \downarrow\pi'_k \\
  (X_\lambda^\mu//_{StMap}\, \rhocek)^T &\into& X_\lambda^\mu//_{StMap}\, \rhocek
\end{matrix}
$$

If we knew that proposition \ref{prop:Phi}'s check of the GKM conditions
implied that $\Phi$ comes from a line bundle $\calL$, we'd be able to
pull back $\calL$ to obtain a line bundle $\calL_k$
on $X_u^v//_{StMap}\, \rhocek$.
Until such a result is available, we pull back along the fixed points
to obtain a class on $(X_u^v//_{StMap}\, \rhocek)^T$ satisfying the
Chang-Skjelbred conditions. Then we sum those classes over $k$,
corresponding to taking the tensor product $\tensor_k \calL_k$.

\section{Anticanonicality and the Bj\"orner-Wachs
  boundary}\label{sec:anticanonical}

\subsection{A criterion for anticanonicality of a divisor}

Let $D \subseteq M$ be a simple normal crossings divisor on an orbifold,
$D$ having components $(D_i)_{i\in I}$, invariant
under a $T$-action on $M$ with isolated fixed points.
Let $\wt T$ be a connected extension of $T$ that acts on $M$'s tangent spaces.
Write $D_J := \cap_{j\in J} D_j$ for the sncd stratum in $D$.
Consider four conditions:
\begin{enumerate}
\item $D$ is \defn{equivariantly anticanonical}, meaning it is given by the
  vanishing of a $\wt T$-{\em invariant} section of the anticanonical bundle.
\item Within each stratum $D_J$, the divisor $\Union_{i\notin J} D_{J \cap \{i\}}$
  is equivariantly anticanonical.
\item For each $f\in M^T$, the restriction $[D \subseteq M]|_f$
  of the divisor class is the sum of the $\wt T$-weights on $T_f M$.
\item For each $f\in M^T$, and $D_J$ the minimal stratum containing $f$ (that is,
  $J = \{j\colon D_j\ni f\}$), the sum of the $\wt T$-weights on $T_f D_J$ vanishes.  
\end{enumerate}

Condition (4) holds trivially when the $T$-fixed point $f$ is a stratum
(i.e. $D_J = \{f\}$).
For example, if $M$ is a smooth toric variety and $D$ the complement
of the torus, then every $T$-fixed point is a stratum,
making (4) easy to check.

\begin{Lemma}\label{lem:anticanonical}
  $
  \begin{matrix}
    (1) &\Longleftrightarrow& (2) \\
    \rotatebox[origin=c]{270}{$\implies$} && \rotatebox[origin=c]{270}{$\implies$} \\
    (3) &\Longleftrightarrow & (4)
  \end{matrix}
  $
%  (1) $\implies$ (2),(3). (2) $\implies$ (4). (3) $\Longleftrightarrow$ (4).

  If $M$ is a smooth projective variety (not stacky), then these are
  all equivalent. More generally, they are equivalent when GKM theory
  holds for the restriction map $K_T(M) \to K_T(M^T)$. 
\end{Lemma}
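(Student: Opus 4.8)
The lemma asserts four logical relations among conditions (1)--(4): the equivalence $(1)\Leftrightarrow(2)$, the equivalence $(3)\Leftrightarrow(4)$, the two downward implications $(1)\Rightarrow(3)$ and $(2)\Rightarrow(4)$, and finally that all four coincide in the smooth-projective (or GKM) setting. The natural strategy is to prove the four bullet relations separately and then close the diamond using GKM theory. I would organize the proof as: (i) $(1)\Leftrightarrow(2)$; (ii) $(3)\Leftrightarrow(4)$; (iii) $(1)\Rightarrow(3)$; (iv) $(2)\Rightarrow(4)$ (which in fact follows formally from (i), (ii), (iii), but is worth stating); (v) the converse $(3)\Rightarrow(1)$ under the GKM hypothesis.

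\textbf{Steps (iii) and (i): the "downward" and "horizontal" implications.} For $(1)\Rightarrow(3)$: if $D$ is cut out by a $\wt T$-invariant section $s$ of $K_M^{-1}$, then $\calO(D)\iso K_M^{-1}$ $\wt T$-equivariantly, so restricting first Chern classes at a fixed point $f$ gives $[D]|_f = c_1(K_M^{-1})|_f = $ the sum of the $\wt T$-weights of $T_f M$ (since $K_M^{-1} = \det TM$, and restriction to a fixed point turns $\det$ of the tangent bundle into the sum of its isotropy weights). This is the standard adjunction/Euler-class computation. For $(1)\Leftrightarrow(2)$: the content is the orbifold adjunction formula $K_{D_J} = \left(K_M \otimes \calO\!\left(\sum_{j\in J} D_j\right)\right)\big|_{D_J}$, applied iteratively; equivalently, on the stratum $D_J$ the residual divisor $\bigcup_{i\notin J} D_{J\cup\{i\}}$ is the restriction of $\sum_{i\notin J} D_i$, and $-K_{D_J}$ is the restriction of $-K_M - \sum_{j\in J}D_j$ (which equals $-K_M$ along the generic point of $D_J$ up to the normal-bundle correction). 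Tracking $\wt T$-equivariance through these isomorphisms shows that a $\wt T$-invariant anticanonical section of $D$ in $M$ restricts to one exhibiting condition (2), and conversely one can build an invariant section on $M$ from compatible invariant sections on a stratification. I would phrase this via the equivariant residue exact sequences for the $D_i$, so that invariance is preserved automatically.

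\textbf{Step (ii): $(3)\Leftrightarrow(4)$.} At a fixed point $f$ with minimal stratum $D_J$, decompose $T_f M \iso T_f D_J \oplus \bigoplus_{j\in J} N_{D_j}|_f$ as $\wt T$-representations, where $N_{D_j}|_f$ is the normal line to $D_j$. The weight of $N_{D_j}|_f$ is exactly the restriction $[D_j \subseteq M]|_f$, so the sum of all $\wt T$-weights on $T_f M$ equals $(\text{sum of weights on } T_f D_J) + \sum_{j\in J}[D_j]|_f = (\text{sum of weights on } T_f D_J) + [D]|_f$. Hence "$[D]|_f$ equals the sum of the weights on $T_f M$" is equivalent to "the sum of the weights on $T_f D_J$ vanishes" — that is precisely $(3)\Leftrightarrow(4)$. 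This is the cleanest and most mechanical part.

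\textbf{Step (v): closing the diamond, and the main obstacle.} The one genuinely nontrivial implication is $(3)\Rightarrow(1)$ (or $(4)\Rightarrow(2)$): condition (3) says the \emph{restrictions} of the classes $[D\subseteq M]$ and $c_1(K_M^{-1})$ to $M^T$ agree, and we want to conclude the classes themselves agree in $\mathrm{Pic}_{\wt T}(M)$ (or $A^1_{\wt T}(M)$), and moreover that the isomorphism is realized by an \emph{invariant section}. Injectivity of $K_T(M)\to K_T(M^T)$ (equivalently, GKM/Chang--Skjelbred for line bundles) gives $\calO(D) \iso K_M^{-1}$ equivariantly; that this equivariant isomorphism can be promoted to a nowhere-worse-than-$D$-vanishing invariant section uses that $D$ is effective and that $H^0(M, K_M^{-1})^{\wt T}$ surjects onto the relevant graded piece — for $M$ smooth projective this is automatic since $\calO(D)$ has the section $1_D$ which is forced to be $\wt T$-semiinvariant, hence (after twisting by a character, absorbed into $\wt T$) invariant. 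I expect the main obstacle, and the reason the lemma is stated conditionally, to be exactly the orbifold version of this injectivity/localization statement: in the stacky case one must know $K_T(M)\to K_T(M^T)$ has trivial kernel (the paper flags \cite{KR} as the nearest available reference), and without it one cannot recover the global line bundle from fixed-point data. So in the write-up I would prove (i)--(iv) unconditionally, then invoke the (classical or hypothesized) injectivity of $K_T(M)\to K_T(M^T)$ to obtain $(3)\Rightarrow(1)$ and thereby the full equivalence.
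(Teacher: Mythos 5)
Your proposal is correct and follows essentially the same route as the paper: (3)$\Leftrightarrow$(4) via the $\wt T$-equivariant splitting $T_f M \iso T_f D_J \oplus \bigoplus_{j\in J}(T_f M/T_f D_j)$, whose normal-line weights sum to $[D\subseteq M]|_f$; (1)$\Rightarrow$(3) (and (2)$\Rightarrow$(4)) by restricting the anticanonical class to a fixed point where the tangent bundle splits into weight lines; (1)$\Rightarrow$(2) by iterated adjunction; and the full equivalence from injectivity of the restriction to fixed points in the smooth projective (or GKM/orbifold-hypothesis) setting, exactly the conditional nature you identified. One small correction: for the direction (2)$\Rightarrow$(1) you propose rebuilding an invariant section on $M$ from compatible invariant sections on the strata, which is both unnecessary and hard to justify (a section on a divisor stratum does not in general extend or glue to one on $M$); the paper's point is simply that (1) \emph{is} the case $J=\emptyset$ of (2), since $D_\emptyset = M$ and the residual divisor there is all of $D$, so that direction is trivial. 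With that one-line replacement your argument matches the paper's proof.
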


\begin{proof}
  (1) is the trivial case $D_\emptyset$ of (2),
  and implies (2) by an inductive application of the adjunction formula.

  (1) implies (3) because we can restrict the anticanonical class to
  a neighborhood of $f$, where the tangent bundle becomes a sum of line bundles.
  Then the anticanonical class is the sum of the weights of those
  line bundles. (2) implies (4) by the same argument.

  For $f \in (D_J)^T$, we have a $\wt T$-equivariant isomorphism
  $$ T_f M  \iso T_f D_J \oplus \Oplus_{j\in J} (T_f M / T_f D_j)$$
  Hence if $A$ is the anticanonical class,
  $$ A|_f = \sum \{\text{weights of }T_f D_J\} +
  \sum_J wt_{\wt T}(T_f M / T_f D_j) $$
  Meanwhile, by definition of $J$,
  $$ [D\subseteq M]|_f = \sum_J [D_j\subseteq M]|_f
  = \sum_J wt_{\wt T}(T_f M / T_f D_j) $$
  So $[D\subseteq M]|_f = A|_f$ (condition (3)) iff
  $\sum \{\text{weights of }T_f D_J\} = 0$ (condition (4)).
  
  Finally, if $M$ is a smooth projective variety, then the restriction to
  fixed points is injective, making the (1)$\implies$(3), (2)$\implies$(4)
  implications reversible.
\end{proof}

\junk{
  Again, were someone to establish that
  the map $K_T(M) \to K_T(M^T)$ is injective for $T$-actions on
  smooth orbifolds with (say) equivariantly projective coarse moduli space,
  then all four conditions in lemma \ref{lem:anticanonical}
  would be found to be equivalent.
  } % end junk

\subsection{Checking the criterion on $X_u^v//_{StMap}\, \rhocek$}

We confirm (4) for the case $M = X_u^v//_{StMap}\, \rhocek$ where
$X_u^v \subseteq Fl(n)$. The components $(D_w)$ are indexed by $w \in (u,v)$
the open interval, where a general point in $D_w$ consists of
curves $\Sigma$ with one node, mapping to $w$.  A $T$-fixed point $f$
is specified by a reflection chain $(w_i)_{i=0}^m$, lying in the
stratum $\bigcap_{i=1}^{m-1} D_{w_i}$.

Assume to begin with that $f \notin D$, i.e., the reflection chain is
simply $(u,v)$ itself (not always possible, of course; this requires
$u^{-1}v$ to be a reflection).
Then adding up the tangent weights from lemma \ref{lem:flagsH0},
we get $0$, confirming (4).

In the general case, by theorem \ref{thm:strata}(1) the stratum
$\bigcap_{i=1}^{m-1} D_{w_i}$ is a product of individual
$X_{w_i}^{w_{i+1}}//_{StMap}\, \rhocek$.
Using this we reduce to the case of one factor, as handled just above.

\subsection{The Grassmannian case}

In \cite{KollarXu} one finds the ``folklore conjecture'' that
the dual simplicial complex to an anticanonical sncd should be closely
related to a sphere. However, the dual simplicial complex to the
boundary of $X_\lambda^\mu//_{StMap}\, \rhocek$,
for $X_\lambda^\mu \subseteq Gr(k,\CC^n)$, is typically a ball not
a sphere (theorem \ref{thm:BjornerWachs}); is this divisor not anticanonical?

\begin{Proposition}
  Let $\Delta$ be the order complex of the open interval $(\lambda,\mu)$
  in Grassmannian Bruhat order, and $F \in \Delta$ a \defn{ridge}
  (codimension $1$ face). So by theorem \ref{thm:strata},
  $F$ corresponds dually to a one-dimensional
  stratum $C$ in $X_\lambda^\mu//_{StMap}\, \rhocek$.

  If $F$ is an \defn{interior} ridge, contained in two facets, then
  $C$ has trivial orbifold stabilizers. If $F$ is an \defn{exterior} ridge,
  contained in one facet, then one $T$-fixed point on $C$ is a stratum,
  and the other is a $[pt/Z_2]$ which is not a stratum.
\end{Proposition}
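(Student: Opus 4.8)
The plan is to pull the ridge $F$ apart into a single interesting factor and analyze that curve directly. Write $F$ as the chain $\lambda = \kappa_0 < \kappa_1 < \cdots < \kappa_m < \kappa_{m+1} = \mu$; since $F$ has one element fewer than a maximal chain of $(\lambda,\mu)$, among the steps $\kappa_i < \kappa_{i+1}$ exactly one, say $\kappa_j < \kappa_{j+1}$, has Bruhat rank $2$ and the rest are covering relations. By theorem \ref{thm:strata}(1), $C = (X_\lambda^\mu//_{StMap}\,\rhocek)_\kappa \cong \prod_{i=0}^m X_{\kappa_i}^{\kappa_{i+1}}//_{StMap}\, S$; a rank-$1$ Richardson is a Schubert $\PP^1$ whose stable map quotient is a reduced point, so all factors but the $j$th collapse and $C \cong X_\nu^{\nu'}//_{StMap}\, S$ with $\nu := \kappa_j$, $\nu' := \kappa_{j+1}$. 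First I would record the combinatorial dichotomy for this rank-$2$ step: the skew shape $\nu'\setminus\nu$ has exactly two boxes, and in a skew shape two boxes can only be edge-adjacent (a horizontal or vertical domino, in which case $\nu\to\nu'$ is a single transposition and $[\nu,\nu']$ has a unique atom $\sigma$) or touch at a single corner (in which case $\nu\to\nu'$ is a product of two commuting transpositions and $[\nu,\nu']$ has exactly two atoms $\sigma_1,\sigma_2$). Since the facets of $\Delta$ containing $F$ are exactly the maximal chains of $(\lambda,\mu)$ obtained by inserting one atom of $[\nu,\nu']$ into the rank-$2$ step, this says: $F$ is exterior iff $\nu'\setminus\nu$ is a domino and interior iff it is corner-touching.

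Next I would pin down the orbi-curve $X_\nu^{\nu'}//_{StMap}\, S$. By theorem \ref{thm:smooth} it is a smooth proper orbi-curve (of dimension $\dim X_\nu^{\nu'}-1=1$), hence a coarse $\PP^1$ with finitely many stacky points. Its boundary $\partial$ consists of the node-caterpillars, which by theorem \ref{thm:strata} correspond bijectively to the atoms of $[\nu,\nu']$; each such caterpillar is $\overline{\nu\sigma}\cup\overline{\sigma\nu'}$, where $\overline{\nu\sigma}$ and $\overline{\sigma\nu'}$ are the $T$-fixed curves of the two covering relations. Adding a single box to a partition swaps two consecutive integers in its index set, so by lemma \ref{lem:grH0} each of these curves carries $\rhocek$-weight $1$; hence each component of the caterpillar is mapped with degree $1$, the caterpillar carries no automorphisms, so the corresponding point is non-stacky and is a $0$-dimensional stratum. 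Because $\gamma_*[\Sigma]$ is constant on the connected component $X_\nu^{\nu'}//_{StMap}\, S$ and equals $1+1=2$ on a node-caterpillar, the generic $S$-orbit closure has degree $d=2$.

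Finally I would treat the irreducible elements $[\PP^1,\gamma]$: here $(\deg\gamma)\cdot(\deg\gamma(\PP^1)) = d = 2$, so $\gamma$ has a nontrivial automorphism precisely when $\deg\gamma = 2$, i.e.\ when $\gamma(\PP^1)$ is a $\rhocek$-fixed curve joining $\nu$ to $\nu'$. Such a $T$-fixed curve exists iff $\nu$ and $\nu'$ differ by a single transposition --- exactly the exterior case --- and is then the unique curve $\overline{\nu\nu'}$ (of $\rhocek$-weight $2$ by lemma \ref{lem:grH0}); its degree-$2$ cover has automorphism group the deck involution $Z_2$, and this single stacky $[pt/Z_2]$ lies in the relative interior of $C$, hence is not a stratum. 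In the interior case no $T$-fixed curve joins $\nu$ and $\nu'$, so every irreducible element is automorphism-free, $C$ has trivial orbifold stabilizers, and its two $T$-fixed points are the two node-caterpillars through $\sigma_1,\sigma_2$, both of which are $0$-dimensional strata. The step I expect to be the main obstacle is this last piece of stacky bookkeeping: verifying that a degree-$2$ cover of $\overline{\nu\nu'}$ has automorphism group exactly $Z_2$ --- which requires handling the finite extension $\wt T$ of $T$ and proposition \ref{prop:curvestab} carefully, in particular checking that the source $\PP^1$ has trivial generic $\rhocek$-stabilizer so that the full automorphism group is the deck group --- together with confirming that $\overline{\nu\nu'}$ is genuinely the only image that can occur with $\deg\gamma>1$. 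The remaining arguments are a routine assembly of theorem \ref{thm:strata}, theorem \ref{thm:smooth}, corollary \ref{cor:chain} and lemma \ref{lem:grH0}.
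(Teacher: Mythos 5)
Your argument is correct and is essentially the paper's own: both reduce via theorem \ref{thm:strata}(1) to the length-two interval $[\nu,\nu']$ and then case-analyze, with the $[pt/\ZZ_2]$ appearing exactly in the domino (exterior) case as the degree-$2$ cover of the $\rhocek$-weight-$2$ curve $\overline{\nu\nu'}$ --- the paper phrases this via generic $\rhocek$-stabilizers on the rank-two Richardson surfaces $\PP^2$, $\PP^2$, $\PP^1\times\PP^1$ together with proposition \ref{prop:curvestab}, while you phrase it via curve classes, map degrees and deck transformations, which is the same computation. One cosmetic slip: two boxes of a skew shape can also be completely disjoint, not only edge- or corner-adjacent, but that case still yields two atoms and no $T$-fixed curve joining $\nu$ to $\nu'$, so your interior-case reasoning applies verbatim.
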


\begin{proof}
  Using theorem \ref{thm:strata}(1) we reduce to the case
  $C = X_\lambda^\mu//_{StMap}\, \rhocek$ where $\#(\mu\setminus \lambda)=2$.
  The skew partition $\mu\setminus \lambda$ can either be a horizontal
  domino, a vertical domino, or two separated squares (closely related
  to the case analysis in \S\ref{ssec:Grweights}).
  The corresponding Richardson surfaces are $Gr(1,\CC^{i,i+1,i+2})$,
  $Gr(2,\CC^{i,i+1,i+2})$, $Gr(1,\CC^{i,i+1})\times Gr(1,\CC^{j,j+1})$
  respectively.

  On each of those, the $\rhocek$-stabilizers are generically trivial.
  In the first two, one finds $T$-fixed curves $Gr(1, \CC^{i,i+2})$,
  $Gr(1,\CC^{i,i+1,i+2}/\CC^{i+1})$ respectively whose $\rhocek$-stabilizers
  are $Z_2$, but each of the four $T$-fixed curves in the third case
  has trivial $\rhocek$-stabilizer.
\end{proof}

This hints at, in the orbifold situation, a modification to the
folklore conjecture: the dual simplicial complex should be closely
related to a sphere or ball, whose boundary faces (the union of the
exterior ridges) correspond to strata containing orbifold points.
We have not explored the landscape of anticanonical sncds on orbifolds enough
to make a more precise conjecture.

\bibliographystyle{alpha}    % it seems this does nothing.

\end{document}